\documentclass[11pt,reqno]{amsart}
\usepackage{amsfonts}
\usepackage{mathrsfs}
\usepackage{amsfonts}
\usepackage{amsmath}
\usepackage{stmaryrd}
\usepackage{amssymb}
\usepackage{amsthm}
\usepackage{mathrsfs}
\usepackage{amsfonts}
\usepackage{amscd}
\usepackage{indentfirst}
\usepackage{enumerate}
\usepackage{amsmath,amsfonts,amssymb,amsthm}
\usepackage{amsmath,amssymb,amsthm,amscd}
\usepackage{graphicx,mathrsfs}
\usepackage{a4wide}
\usepackage{amsmath}
\allowdisplaybreaks[4]
\usepackage[colorlinks,linkcolor=blue,anchorcolor=blue,linktocpage=true,urlcolor=blue,citecolor=blue]{hyperref}

\usepackage{color}

\usepackage{empheq}
\allowdisplaybreaks
\usepackage[titletoc]{appendix}

 \numberwithin{equation}{section}
\newtheorem{theorem}{Theorem}[section]
\newtheorem{proposition}[theorem]{Proposition}

\newtheorem{lemma}[theorem]{Lemma}

\newtheorem{remark}[theorem]{Remark}

\newcommand{\R}{\mathbb{R}}

\begin{document}

\title[New solutions to the H\'{e}non problem]
{New sign-changing solutions to the H\'{e}non problem in dimension $2$}
\author[Gladiali]{Francesca Gladiali}
\address{Dipartimento di Scienze CFMN,  Universit\`a degli Studi di Sassari, Via Vienna 2 - 07100 Sassari, Italy, e-mail: {\sf fgladiali@uniss.it}.}

\author[Yang]{Pingping Yang}
\address{{School of Mathematics and Statistics, Key Laboratory of Nonlinear Analysis
$\&$ Applications (Ministry of Education),
 Hubei Key Laboratory of Mathematical Sciences, Central China
Normal University, Wuhan, 430079, P. R. China \& Dipartimento SCFMN, Universit$\grave{A}$ degli Studi di Sassari, Via Vienna 2, 07100 Sassari, Italy, e-mail: \sf ypp15623175603@mails.ccnu.edu.cn}.}

\begin{abstract}
In this paper, we construct
new sign-changing solutions for the H\'{e}non problem
\begin{equation*}
\begin{cases}
-\Delta u= |x|^{\alpha} |u|^{p-1}u,\,\,\,\text{in}\,\,\,
\Omega,
\\[1mm]
u=0,\,\,\,\,\,\,\text{on}\,\,\,
\partial\Omega,
\end{cases}
\end{equation*}
where $\Omega\subseteq \mathbb{R}^2$ is a bounded smooth domain containing 0, $\alpha\in\mathbb{R}$ is a positive parameter and $p$ approaches $+\infty$.
\end{abstract}
\maketitle
\keywords {\noindent {\bf Keywords:} {\small H\'{e}non problem; sign-changing solutions
}
\smallskip
\newline
\subjclass{\noindent {\bf 2020 Mathematics Subject Classification:}
 35J60 $\cdot$ 35B33 $\cdot$ 35J15 $\cdot$ 35B40 }
}

\section{Introduction}\label{s1}
In this paper, we consider the H\'{e}non problem
\begin{equation}\label{1.0}
\begin{cases}
-\Delta u= |x|^{\alpha} |u|^{p-1}u,\,\,\,\text{in}\,\,\,
\Omega,
\\[1mm]
u=0,\,\,\,\,\,\,\text{on}\,\,\,
\partial\Omega,
\end{cases}
\end{equation}
where $\Omega\subseteq \mathbb{R}^2$ is a bounded smooth domain containing 0, $\alpha\in\mathbb{R}$ is a positive parameter  and $p>1$. Problem \eqref{1.0} was proposed by M. H\'{e}non in \cite{M} to study rotating stellar structures and has drawn considerable interest from the academic community.  For a fixed $\alpha\geq0$ equation \eqref{1.0} admits a ground state solution for any value of the exponent $p>1$,  and the ground state solution is positive. Due to the weight term $|x|^\alpha$,  
the symmetry result in \cite{GNN}, does not hold and the H\'{e}non equation exhibits symmetry breaking when $\Omega$ is a ball. 
Indeed,  Chen, Ni and Zhou in \cite{CNZ} noted, by numerical computations,  that, for some values of the parameters $\alpha$ and $p$, the ground state solution to problem \eqref{1.0}, in a ball is nonradial. Smets, Su and Willem in \cite{SWS} confirmed it partially, when $\Omega$ is the unit ball, by showing that for any $p$ fixed there exists $\alpha^{\ast}=\alpha^{\ast}(p)>0$ such that for $\alpha>\alpha^{\ast}$,  the ground states solutions to \eqref{1.0} are nonradial. Moreover, they  proved that if $p$ is near $1$,  the ground state solution must be radial.  Byeon  and  Wang in \cite{BW} studied the asymptotic properties of the ground state solutions, in  both cases,  radial and nonradial, obtaining again the symmetry breaking when $\alpha$ is large.  In the case of higher dimensions, namely for $N\geq 3$, they showed a boundary concentration phenomenon for the ground state solutions, see \cite{BW2} and also \cite{CPY}. This boundary concentration is prevented in dimension $2$, as observed by Esposito, Pistoia and  Wei in \cite{EPW}.  
In this last paper they proved the existence of positive solutions concentrating at $k$ symmetric points as $p\rightarrow\infty$, 
$\alpha>0$, for any $1\le k<\alpha+1$, when $\Omega\subset \R^2$ contains the origin.  Let us observe that this result yields a multiplicity of positive solutions to problem \eqref{1.0} in convex domains, for $p$ large.
One of the major difference with the higher dimensional case is that solutions to \eqref{1.0} are uniformly bounded as $p\to +\infty$.  This holds for any type of solution.
Indeed, besides ground state solutions, there is a rich variety of other solutions,  including both positive and sign-changing ones and the understanding of their shape remains incomplete, even in dimension two.
As for the positive solutions,  \cite{B} established a uniform bound on their mass for sufficiently large value of the exponent $p$. Building on this result, several recent works have investigated the possible location of concentration points and their number, see \cite{zhao},  \cite{du} and \cite{chen-li}. In this last paper it has been shown that positive solutions stay bounded as $p\to \infty$,  the concentration can happen only in a finite number of points and boundary concentration is forbidden. See also \cite{CY} for a similar result.
As for nodal solutions, the available theory is far less developed. Most existing results concern radial nodal solutions, whereas the nonradial case has received only limited attention.  Among the few available results in the nonradial setting, we mention the paper  \cite{ZYB} where the authors showed the existence of multipeak nodal solutions with any number of peaks placed at the origin and at the vertex of a regular polygon as $p\to \infty$, under some assumption on $\alpha$ when $\Omega$ is the unit ball.  For this class of solutions, the positive and negative peaks are arranged in an alternating pattern, all the peaks are simple and isolated.\\
We next review the main results on radial solutions, where the theory is richer,  of course when $\Omega=B_1\subset \R^2$.  For any $j\geq 1$ there exists a radial solution to  \eqref{1.0} with $j$ nodal regions. In the case $j=1$, the corresponding solution is positive. These solutions have been studied for $j=1,2$ in the paper \cite{AGG} and for any $j\geq 1$ in \cite{SI} using a correlation among radial solutions to \eqref{1.0} with radial solutions to the Lane-Emden problem, namely \eqref{1.0} with $\alpha=0$.  In this papers it is shown that radial solutions concentrate only in the origin as $p\to \infty$ and that the concentration is not simple.
This results in solutions resembling a tower of bubbles with alternating peaks at the origin.  Moreover in \cite{AGG} and \cite{ADI} the authors compute the exact Morse index of the radial solutions when $p$ is large enough.  See also \cite{DSP} for an estimate of the Morse index of radial solutions which holds for every value of the exponent $p$. Starting form the study of the radial solutions in $B_1$, in \cite{AGG}, Amadori and Gladiali investigated least energy positive and sign-changing solutions by minimizing the energy functional associated with problem \eqref{1.0} on the Nehari manifold and on the nodal Nehari manifold, respectively.
The nodal Nehari manifold, introduced in \cite{BW0}, provides the natural constraint for sign-changing solutions.  Bartsch and Weth in \cite{BW0}, proved that,  least energy nodal solutions to problem \eqref{1.0} have exactly two nodal regions and Morse index $2$. Moreover these solutions are foliated Schwarz symmetric by \cite{BWW1}, namely when nonradial they are strictly monotone in the angular variable in half domain, see also \cite{PW}.  In \cite[Theorem 1.1]{AGG1}, it is proved that every radial nodal solution satisfies
$
m(u)\geq 4+\Bigl[\frac{\alpha}{2}\Bigr]
$
for every $p$.  As a consequence, least energy nodal solutions to \eqref{1.0} in $B_1$ are nonradial,  the Morse index of the radial solutions depends on $\alpha$ and it goes to infinity, as $\alpha\rightarrow\infty$, see also \cite{LoW}. Using a weighted eigenvalue problem, studied in \cite{AGG0} and \cite{AGG1},  Amadori and Gladiali were able to identify the simmetries of the eigenfunctions of the linearized problem to \eqref{1.0} at a radial solution and to characterize the degeneracy points.  
Motivated by these insights,  in \cite{AGG} they proved the existence of $\lceil\frac{\alpha}{2}\rceil$ positive nonradial solutions and $\lceil\frac{2+\alpha}{2}t\rceil-1$ distinct nodal nonradial solutions when $p$ is large, see Theorem 1.5 and Theorem 1.6 in \cite{AGG}. The value of $t$ is characterized in \cite{GGP2}, in the context of the Lane-Emden problem,  by $t=1+\frac{2\sqrt{e}}{\overline{t}}$ where $\overline{t}$ is the unique solution of the equation $2\sqrt{e}\log\overline{t}+\overline{t}=0$, (see also \cite{GS}). \\
This multiplicity result in $B_1$ is obtained by exploiting spaces of functions that are invariant under rotations by an angle $2\pi/k$, for suitable values of $k$. We denote, here, these spaces by $H^1_{0,k}(B_1)$. Restricting the energy functional to these spaces, one can minimize it on the Nehari manifold and on the nodal Nehari manifold to obtain least energy positive and sign-changing solutions, respectively.
A priori, these least energy solutions in $H^1_{0,k}(B_1)$ may be radial. However, by computing the Morse index of the radial least energy positive and least energy nodal solutions in $H^1_{0,k}(B_1)$, one can show that, for suitable values of $k$, depending on $\alpha$, these solutions must be nonradial.  This is done in \cite{AGG}. These solutions are symmetric with respect to the bisector of a sector of opening angle $2\pi/k$ and are strictly decreasing with respect to the polar angle in one half of the sector, that is, in a sector of opening angle $\pi/k$ by \cite{G}.
In \cite{GS}, Gladiali and Stegel identify three possible configurations of the nodal sets of these solutions. In the first case, the solution has $2k$ nodal regions and the nodal set contains a connected component joining the origin to the boundary $\partial B_1$. In the second case, it has $k+1$ nodal regions, with one nodal domain contained in a sector of angle $2\pi/k$ and the remaining region being $k$-invariant and connected.  Finally, in the third case the solution is highly symmetric and $k$-invariant, with exactly two nodal regions,  the closure of the nodal set is contained in $B_1$ and does not intersect $\partial B_1$.
These last case correspond to \emph{quasiradial solution}. See Theorem 1.4 and Corollary 1.5 in \cite{GS}. Finally, we mention the result in \cite{A0}, where the author constructs positive and sign-changing solutions in the regime $p \to 1^+$, derives their asymptotic profiles and computes the corresponding Morse index. These results are then used in \cite{A} to prove a bifurcation phenomenon from the radial solution. All these last results hold when $\Omega=B_1$.\\

It can be seen that setting $\alpha=0$ in \eqref{1.0} makes the H\'{e}non problem degenerate into the Lane-Emden problem, which indicates an intimate link between the two equations. As reported in \cite{LIA}, sign-changing solutions with cluster structures have been constructed for the Lane-Emden equation as $p\to\infty$. These solutions exhibit non-simple concentration at the origin: their asymptotic profile consists of one radial positive bubble and $k$ symmetric non-radial negative bubbles. The negative bubbles concentrate more slowly, and their positions drift toward the origin at an even lower speed. These findings reveal the concentration mechanism for large-exponent solutions and lay a solid foundation for constructing non-radial sign-changing solutions.

Motivated by these recent results and the close connection between the two problems, this paper extends the relevant theories to the H\'{e}non equation. We prove the existence of such non-radial clustered sign-changing solutions and systematically describe their concentration features and asymptotic behaviors as $p\to\infty$. We observe that this construction requires some symmetry of the domain $\Omega$,  but extends the previous results to a broader class of domains than $B_1$.

In order to state it rigorously let us assume $ 0 \in \Omega $ and $\Omega $ to be invariant with respect to reflection across the \( x_1 \) axis:
\begin{equation}\label{1.1}x = (x_1, x_2) \in\Omega \, \, \quad \Longleftrightarrow \quad \overline{x} := (x_1, -x_2) \in\Omega. \end{equation}

We also ask \( \Omega \) to be a \( k \)-symmetric domain, for some integer \( k \) depending on $\alpha$, that is invariant under a rotation of angle \(\frac{2\pi}{k}\) with respect to the origin:
\begin{equation}\label{1.2}x \in \Omega \quad \Longleftrightarrow \quad e^{i\frac{2\pi}{k}} x := \left( x_1 \cos \frac{2\pi}{k} - x_2 \sin \frac{2\pi}{k}, x_1 \sin \frac{2\pi}{k} + x_2 \cos \frac{2\pi}{k} \right) \in \Omega, \end{equation}
where the values of $k$ satisfy
\begin{equation}\label{k.1}
\frac{3(2+\alpha)}{2}< k\leq\lceil\frac{2+\alpha}{2}t\rceil-1, \end{equation}
with $ t \sim 5.1869$. The value of $t$ is characterized in
\cite{GS}  by $t=1+\frac{2\sqrt{e}}{\overline{t}}$ where $\overline{t}$ is the unique solution of the equation $2\sqrt{e}\log\overline{t}+\overline{t}=0$. This also shows that the number \begin{equation}\label{0k0}k_0:= \frac{2+\alpha}{2}t,\end{equation} is the unique solution of the equation
\begin{equation}\label{k0}
\frac{1}{2} + \frac{2k_0-2-\alpha}{2k_0+2+\alpha} \log \frac{2(2+\alpha)}{2k_0-2-\alpha} = 0 .
\end{equation}

Obviously the ball $B_1$ satisfies both the assumption \eqref{1.1} and \eqref{1.2}.
Let us also introduce the following space of \( k \)-symmetric functions:\begin{equation}\label{1.3}
H^1_{0,k}(\Omega):= \left\{ u \in H^1_0(\Omega) : u(x) = u(\overline{x}) = u \left( e^{i\frac{2\pi}{k}} x \right) \text{ for a.e. } x \in \Omega \right\}. \end{equation}

Our main result reads as follows.

\begin{theorem}\label{athm1.1} Let $\alpha>0$ and let $k$ satisfy \eqref{k.1}.  Assume \( \Omega \subset \mathbb{R}^2 \) is a smooth bounded domain such that \( 0 \in \Omega \) which satisfies assumptions \eqref{1.1} and \eqref{1.2}. Then there exists \( p_k > 1 \) such that, for \( p > p_k \), there is a nodal, $k$-symmetric solution $ u_{k,p} \in H^1_{0,k}(\Omega) $ to \eqref{1.0} that satisfies
\begin{align*}
u_{k,p} = \tau_p (PU_{1} - \eta_p PU_{2}) + o\left(\frac{1}{p}\right), \quad \text{in} \ L^\infty_{\text{loc}}(\overline{\Omega} \setminus 0)\end{align*}
as $ p \to +\infty $, where $ PU_{1} $ and $ PU_{2} $ are the projections on $ H^1_0(\Omega) $ of
\begin{align*}U_{1}(x) := \log \frac{2(2+\alpha)^{2}\delta_p^{2+\alpha}}{(|x|^{2+\alpha} + \delta_p^{2+\alpha})^{2}}, \quad \text{which solves } -\Delta U_{1} =|x|^{\alpha} e^{U_{1}}, \quad \text{in} \ \mathbb{R}^2\end{align*}
and
\begin{align*}U_{2}(x) := \log \frac{8k^2\beta_p^{2k}}{(|x^k - \rho_p^k|^{2} + \beta_p^{2k})^2}, \quad \text{which solves } -\Delta U_{2} =
|x|^{2k-2}e^{U_{2}}, \quad \text{in } \mathbb{R}^2.\end{align*}
We denote by $\rho_p^k$ the point $\rho_p^k:=\binom{\rho_p^k}{0}\in\mathbb{C}$. All the real parameters, $\tau_p,\eta_p,\delta_p,\beta_p,\rho_p$ are positive and as $p\to+\infty$ satisfy
\begin{align*}
\tau_p\sim \frac{A_k}{p},\quad \eta_p\sim \eta_{\infty,k}:=\frac{2(2+\alpha)}
{2k-2-\alpha},
\end{align*}
\begin{align*}
\delta_p\sim B_k e^{-(b_k+o(1))p},\quad \beta_p\sim C_k e^{-(c_k+o(1))p}\quad \text{and}\quad \rho_p\sim R_k e^{-(r_k+o(1))p},
\end{align*}
where $B_k$, $C_k$ and $R_k$ are positive constants which only depend on $k$ and $\alpha$ and
\begin{align}\label{abrn}
A_k&:=\sqrt{e}\left(\frac{2(2+\alpha)}
{2k-2-\alpha}
\right)^{-\frac{8k(2+\alpha)}{
(2k+2+\alpha)^2}}, \ \ \ \ \ \ \ \ \ \ \ \ \ \ \ \ \ \ \
b_k:=\frac{1}{2(2+\alpha)}\left(1+\frac{16k(2+\alpha)}{(2k+2+
\alpha)^2
}\log\frac{2k-2-\alpha}{2(2+\alpha)}\right),\notag\\
c_k&:=\frac1{4k}\left(1+2\frac{(2k-2-\alpha)^2}{
(2k+2+
\alpha)^2}\log \frac{2k-2-\alpha}{2(2+\alpha)}\right),\ \ \ \ \ \ \ \
r_k:=\frac{2(2k-2-\alpha)}{(2k+2+
\alpha)^2}
\log\frac{2k-2-\alpha}{2(2+\alpha)}.
\end{align}
Moreover, we have 
\begin{align}\label{ffff+}
p|u|^{p+1} &\rightarrow 0, \ \text{in}\ L^\infty_{\text{loc}}(\Omega \setminus \{0\}),\notag\\
p\int_{\Omega} |x|^{\alpha}u(x)_+^{p+1} dx
&\rightarrow 4(2+\alpha)\pi e \left( \frac{2k-2-\alpha}{2(2+\alpha)} \right)^{\frac{16k(2+\alpha)}{
(2k+2+\alpha)^2}},\\
p\int_{\Omega} |x|^{\alpha}u_{-}(x)^{p+1} dx
&\rightarrow8k\pi e \left( \frac{2(2+\alpha)}{2k-2-\alpha} \right)^{\frac{2(2k-2-\alpha)^{2}}{
(2k+2+\alpha)^2}}.\notag
\end{align}
\end{theorem}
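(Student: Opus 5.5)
The proof will be a Lyapunov--Schmidt reduction in the symmetric space $H^1_{0,k}(\Omega)$, modeled on the Lane--Emden construction of \cite{LIA} and on \cite{EPW}, with $p\to+\infty$ as the perturbation parameter.

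\textbf{Ansatz.} We look for solutions of the form $u=W+\phi$, where
\[
W:=\tau\,(PU_1-\eta\, PU_2)
\]
and the parameters $\tau,\eta,\delta,\beta,\rho$ are still to be chosen. The central bubble $U_1$ solves the H\'enon--Liouville equation with weight $|x|^\alpha$. The function $U_2$ is a $k$-fold covering bubble: via $z\mapsto z^k$ it describes $k$ negative peaks located at the $k$-th roots of $\rho^k$, and its weight $|x|^{2k-2}$ is the Jacobian of this map. Since $U_1$, $U_2$ and $\Omega$ are invariant under \eqref{1.1} and \eqref{1.2}, everything can be done in $H^1_{0,k}(\Omega)$. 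The projections satisfy
\[
PU_1=U_1-\log\bigl(2(2+\alpha)^2\delta^{2+\alpha}\bigr)+4\pi(2+\alpha)H(x,0)+O(\delta^{2+\alpha}),
\]
and an analogous expansion for $PU_2$, with $H$ the regular part of Green's function.

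\textbf{Choice of parameters.} We fix the parameters so that $W$ is an approximate solution to order $o(1/p)$ in suitable weighted norms. Near $0$, at scale $\delta$, we require
\[
\tau^{p-1}e^{\dots}\approx 1,\qquad \tau(\text{value at }0)=1+O(1/p),
\]
so that $\tau|PU_1|^{p}\approx e^{U_1}$. This gives $\tau p\,(\text{const}-(2+\alpha)\log\delta+\dots)=p$ and the corresponding matching condition. At the negative peaks, at scale $\beta$, we impose the analogous relation for $-\tau\eta PU_2$, including the interaction term. The influence of $\eta\,PU_2$ at the scale of the central bubble, namely $\tau\eta\cdot 4k\log(1/\rho)\sim\eta\, 4k\, r_k$, must be of order $1$ relative to $p$. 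Solving these matched log-relations yields $\eta_\infty=\frac{2(2+\alpha)}{2k-2-\alpha}$, which comes from the balance of flux/mass ($8\pi(2+\alpha)$ against $8\pi k$ weighted by $\eta$). It also yields the exponential rates $b_k,c_k,r_k$ and the value of $A_k$; the $\sqrt e$ in $A_k$ arises as in the Lane--Emden case from the second-order expansion $(1+t/p)^p\approx e^{t}(1-t^2/2p)$. At this stage I would not fix $\rho$ exactly: I keep one free parameter, say $\rho=R e^{-r_kp}$ with $R$ in a compact interval. The condition $k>\frac{3(2+\alpha)}{2}$ should guarantee the ordering of scales, $\delta\ll\beta\ll\rho$ in the appropriate exponential sense, with $\beta/\rho\to0$ so that the negative peaks are separated. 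The condition $k\le\lceil k_0\rceil-1$ should guarantee positivity of the rates, that is, that the relation \eqref{k0} has the correct sign.

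\textbf{Linear and nonlinear step.} Next I rescale in the two regions and study the linearized operator $L\phi=-\Delta\phi-p|x|^\alpha|W|^{p-1}\phi$. Its limiting kernels are those of the H\'enon--Liouville linearizations. For $-\Delta v=|x|^\alpha e^{U}$ with $\alpha$ not an even integer, the $k$-symmetric, even kernel is only the dilation mode $\partial_\delta U_1$. For the $z^k$-bubble, the nondegeneracy results in the symmetric class leave the dilation and the translation along $\rho$. The reason is that symmetry in $H^1_{0,k}$ with $|x|^{2k-2}$ restricts the admissible modes to Fourier modes that are multiples of $k$, and the assumption on $k$ excludes the resonances with $\alpha$. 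The dilation modes are already fixed by the parameters $\tau,\delta,\beta$, so only the $\rho$-direction remains. I then prove an a priori estimate $\|\phi\|_\infty\le Cp\|h\|_*$ in weighted norms on the orthogonal complement of the translation kernel, by a blow-up/contradiction argument. The correction $\phi$ with $\|\phi\|_\infty=o(1/p)$ is obtained by contraction, using the Taylor expansion of $|W+\phi|^{p-1}(W+\phi)$ and the error estimate.

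\textbf{Reduced problem and conclusion.} The reduced problem is one-dimensional in $R$. The reduced energy expansion is $c_0+\frac{1}{p}F(R)+o(1/p)$, where $F$ comes from the interaction between the central bubble, the $k$ peaks, and the Robin function. $F$ has a nondegenerate critical point (a strict max/min) in $(0,\infty)$, and persistence under $C^0$-small perturbations gives a true critical point for $p>p_k$. Finally, the limits of $p\int|x|^\alpha u_\pm^{p+1}$ follow from $\tau^2p^2\int e^{U_i}$ times the weights: for the positive part this is $8\pi(2+\alpha)\cdot A_k^2/2$ up to normalizations, which gives $4(2+\alpha)\pi e(\dots)$, and the negative part is analogous. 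The decay $p|u|^{p+1}\to0$ away from the origin is immediate from $|u|\le C/p$ there.

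\textbf{Main obstacle.} I expect the hardest step to be the invertibility estimate uniform in $p$ for the multi-scale configuration, together with the exact bookkeeping of the $O(1/p)$ corrections that determine $\eta,\tau$ and the reduced function $F$. In particular, I must verify that \eqref{k.1} makes all competing error terms (powers of $\delta/\rho$, $\beta/\rho$, and $\rho^{2+\alpha}$ interactions) exponentially small compared with $1/p$.
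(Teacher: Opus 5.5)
Your overall framework matches the paper: a reduction in $H^1_{0,k}(\Omega)$, orthogonality only to the translation mode $Z_{1,2}$, and the linear estimate $\|\phi\|_\infty\le Cp\|h\|_*$. There are, however, two genuine gaps.

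\textbf{How the parameters are fixed and the reduced problem is posed.} The matching conditions you describe are the normalizations $\tau\delta^{-(2+\alpha)}=(p\tau)^p$ at the origin and $k^2\rho^{2k-2-\alpha}\beta^{-2k}\tau\eta=(p\tau\eta)^p$ at the negative peaks, plus the two value-matching relations (cf.\ \eqref{1.11}). These are four equations for the five unknowns $\tau,\eta,\delta,\beta,\rho$. They leave $\eta$ free, and every exponential rate depends on it, e.g.\ $\log\rho=\frac{\log\eta}{(2k\eta-2-\alpha)(1+1/\eta)}\,p+O(1)$ (Lemma~\ref{lem2.1}). So they produce neither $\eta_{\infty,k}$ nor $b_k,c_k,r_k$. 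A flux balance does not help either: $8\pi(2+\alpha)=8\pi k\eta$ gives $\eta=\frac{2+\alpha}{k}$, which is an endpoint of the admissible range.

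The value $\eta_{\infty,k}$ is selected by the reduced energy. Taking $\eta$ as the free parameter on a fixed interval around $\eta_{\infty,k}$, one shows $F_p(\eta)=\frac{2e\pi}{p}\bigl(\varphi_k(\eta)+O(\frac{\log p}{p})\bigr)$ with $\varphi_k(\eta)=(2+\alpha+2k\eta^2)\,\eta^{-\frac{4k\eta^2}{(2k\eta-2-\alpha)(\eta+1)}}$. Its only critical point in $(\frac{2+\alpha}{2k},1)$ is the strict minimum $\eta_{\infty,k}$. The rates and $A_k$ come only afterwards, by inserting $\eta_{\infty,k}$ into Lemma~\ref{lem2.1}. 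Accordingly, \eqref{k.1} amounts to $\eta_{\infty,k}<1$ (i.e.\ $\rho\to0$) together with $\frac12+\frac{\log\eta_{\infty,k}}{1+\eta_{\infty,k}}>0$ (i.e.\ $\beta\ll\rho$).

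Your reduction in $R$ cannot see this mechanism. Prescribing $\rho=Re^{-r_kp}$ with $R$ in a compact set forces $\eta=\eta_{\infty,k}+O(1/p)$. Since $\varphi_k'(\eta_{\infty,k})=0$, the leading part of the energy then varies with $R$ only by $O(p^{-3})$, far below the $O(\log p/p^2)$ remainder. So the $\frac1pF(R)$ you postulate is constant at that order and has no nondegenerate critical point; the Robin function only affects the $O(1)$ prefactors $K_i(\eta)$. The reduction has to be done in $\eta$ over an $O(1)$ range. You then need the observation of Lemma~\ref{5.8}, namely $\partial_\eta\Upsilon\approx b(\eta)\frac{\rho^k}{\beta^k}P\widetilde Z$, to turn $F_p'(\eta)=0$ into $c_p(\eta)=0$.

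\textbf{The first-order ansatz is too crude for the fixed-point step.} Near the origin $(1+\frac{U_\alpha}{p})^p=e^{U_\alpha}\bigl(1-\frac{U_\alpha^2}{2p}+\dots\bigr)$, so $\|\mathcal E\|_*\sim\frac{\tau}{p}\sim p^{-2}$, and similarly at the negative peaks. Your estimate $\|\phi\|_\infty\le Cp\|h\|_*$ loses a factor $p$ because of the dilation modes, so this only gives $\|\phi\|_\infty\sim p^{-1}$, the same order as $\tau$. Then $\|\mathcal N(\phi)\|_*\sim p\|\phi\|_\infty^2\sim p^{-1}$ is mapped to an $O(1)$ function. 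There is no contraction, and the $o(1/p)$ expansion and the energy and mass asymptotics are out of reach.

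Closing the argument requires $\|\mathcal E\|_*=o(p^{-3})$. The paper gets $\|\mathcal E\|_*\le Cp^{-4}$ (Proposition~\ref{lem2.6}), hence $\|\phi\|_\infty\le Cp^{-3}$, by adding the correctors $\frac{PV_i}{p}+\frac{PW_i}{p^2}$. Here $V_\alpha,V_0,W_\alpha,W_0$ solve the linearized H\'enon and Liouville equations, with right-hand sides chosen to cancel the $p^{-1}$ and $p^{-2}$ terms in the expansion of $(1+\frac{U}{p}+\frac{V}{p^2}+\frac{W}{p^3})^p$. Their logarithmic growth constants $C_1,\dots,C_4$ must then be built into the matching relations \eqref{1.11}.
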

\begin{remark}\label{r1.3}
Since the parameters satisfy \eqref{abrn} and $k$ satisfies \eqref{k.1}, it holds that
$$0<\delta_p <\beta_p < \rho_p ,$$
which shows that the positive bubble concentrates more rapidly than its negative counterpart and both concentration rates exceed the speed at which the negative peaks converge toward the origin.\end{remark}
\begin{remark}\label{r1.2} The clustering behavior requires in particular that the speed at which the negative peaks approach the positive one is slower than the rate of concentration of the negative bubble, $0<\delta_p < \rho_p $ and $0<\beta_p < \rho_p $, namely
\begin{equation*}(2k-2-\alpha) \log \frac{2(2+\alpha)}{2k-2-\alpha} < 0
\quad \text{and} \quad \frac{1}{2} + \frac{2k-2-\alpha}{2k+2+\alpha} \log \frac{2(2+\alpha)}{2k-2-\alpha} > 0 .
\end{equation*}
This suggests the idea that these solutions can be "quasiradial" as defined in \cite{GS}.\\
By the first inequality we have the bound $k>3\frac{2+\alpha}{2}$. 
Moreover, since the function $\frac{1}{2} + \frac{2k-2-\alpha}{2k+2+\alpha} \log \frac{2(2+\alpha)}{2k-2-\alpha}$ is strictly decreasing,  together with \eqref{0k0} and \eqref{k0}, gives that $k<\frac{2+\alpha}{2}t$ and justifies assumption \eqref{k.1}.

\end{remark}

Let us conclude with a few remarks on this result.\\
First, we observe that the result in \cite{LIA} builds upon the previous work \cite{GI}, where the authors proved the existence of quasiradial solutions to the Lane-Emden equation in the unit disk $B_1$ for large $p$, working in the spaces $H^1_{0,k}$ with $k=4,5$ (see \cite[Theorem 1.4]{GI}). In the same paper, they also proved that the least energy nodal solution to the Lane-Emden problem in $H^1_{0,k}$,  for $k=3,4,5$,  is radial when $p$ is close to $1$ and nonradial for large $p$ (see Theorem 1.3). In fact, this change of symmetry is explained by a bifurcation from the radial least energy nodal solution (see Theorem 1.5). The existence result in \cite{LIA} strongly suggests that the solutions constructed there coincide with the quasiradial solutions found in \cite{GI}, since they arise precisely for the same values $k=4,5$.\\
The counterpart of these results for the Hénon problem has been investigated in \cite{AGG} and \cite{GS}. In \cite[Theorem 1.6]{AGG}, the authors prove the existence of at least $\lceil \frac{2+\alpha}{2}t\rceil-1$ distinct nonradial solutions for large $p$, where $t$ is the constant appearing in \eqref{k.1}. The paper \cite{GS} studies the nodal structure of the least energy solutions in the spaces $H^1_{0,k}$ and concludes that the nonradial solutions found in \cite{AGG} are quasiradial whenever
\[
k>\left[\frac{2+\alpha}{2}\gamma-1\right],
\]
where $\gamma\approx4.859$ is characterized in \cite[Proposition 2.6]{GS}. In Theorem \ref{athm1.1}, we prove the existence of clustered solutions to the Hénon problem for every $k$ satisfying \eqref{k.1}, and in particular for
\[
\frac{3(2+\alpha)}{2}<k\le\left[\frac{2+\alpha}{2}t-1\right].
\]
This suggests two possible scenarios: either the nonradial solutions obtained in \cite{AGG} are quasiradial also for these values of $k$, or the clustered solutions constructed in Theorem \ref{athm1.1} may have a nodal line that touches the boundary of $\Omega$ for low values of $k$. 
Moreover, this also suggests that quasiradial solutions may exist for smaller values of $k$, and that the lower bound in \eqref{k.1} reflects a technical limitation of our method rather than an intrinsic feature of the problem. We believe that this issue deserves further investigation. \\
The comparison between the Lane-Emden and the Hénon problems reveals both similarities and significant differences. In the case of concentration away from the origin, the two problems exhibit essentially the same behavior, and the construction relies on the same standard bubbles (see \eqref{001.040}). On the other hand, when concentration occurs at the origin, the Hénon problem displays a different profile, involving the bubbles in \eqref{1.040}, whereas the Lane-Emden problem is governed by the standard Liouville bubble.\\
The clustered solutions constructed here combine these two features. The positive peak is described by the Hénon bubble centered at the origin, while the negative peaks develop exactly as in \cite{LIA}. Their configuration is determined solely by the imposed $k$-symmetry and, in particular, is independent of the Hénon exponent $\alpha$. This further highlights that the role of $\alpha$ is confined to the profile of the concentrating positive peak, whereas the geometry of the surrounding negative peaks is dictated by the symmetry of the construction.\\
Although symmetry is a crucial ingredient in our construction, it remains an open question whether these solutions also exist in general, non-symmetric domains $\Omega$.
The Hénon problem continues to offer a wide range of interesting phenomena to explore. The variety of possible solution profiles is far from being completely understood, and several configurations, including tower-type solutions in a general domain $\Omega$, remain open problems for future investigation.

The paper is organized as follows: in Section 2, we first present the leading-order term of the approximate solution. Following the ideas in \cite{LIA, PMA,PT}, we further introduce the second-order and third-order correction terms, and finally present the overall form of the solution to the target equation \eqref{1.0}.
In Section 3, we first determine all parameters of the approximate solution, express them as functions of $\eta$, we clarify their dependence on $p$, and provide the preliminary estimates required for subsequent arguments. We then conduct error analysis separately for the neighborhoods of positive peaks, negative peaks and regions far from all peaks. Finally, we choose an appropriate  weighted norm to complete the overall error estimation. In Section 4, we establish the invertibility of the linear operator $\mathcal{L}$ in \eqref{1.9}. In Section 5, we firstly apply the implicit function theorem to reduce the original equation \eqref{1.0} to a finite-dimensional problem. We then demonstrate that solving this problem amounts to seeking critical points of a real-valued function of one variable. We also prove that these critical points exist and are exactly the minimizers. The proof of Theorem \ref{athm1.1} is thus completed.

\section{Preliminaries}

In this section, we present the essential elements needed to obtain the solution and establish its global ansatz. 

\vskip 0.1cm

\subsection*{ The first order term of the ansatz: the bubbles.}
Let us recall the standard bubble
\begin{equation}\label{001.040}U_{0}(x) := \log \frac{8}{(|x|^{2} + 1)^2},
\end{equation}
which solves
$$
-\Delta U_{0}(x) = e^{U_{0}(x)}, \quad \text{in} \ \mathbb{R}^2,$$
together with its rescaling
$ U_0\left(\frac{x}{\delta}\right) - 2\log\delta$ and its translations $U_0\left(x-x_0\right)$ for $x\in\mathbb{R}^2$.

For what concerns the H\'{e}non type problem \begin{equation}\label{01.040}
-\Delta U(x) = |x|^{\alpha}e^{U(x)}, \quad x \in \mathbb{R}^2,
\end{equation}
it is known that the radial solutions are given by
\begin{equation}\label{1.040}
U_{\alpha}(x) :=U_0\left(|x|^{\frac{2+\alpha}{2}}\right)
+2\log\frac{2+\alpha}{2}
= \log \frac{2(2+\alpha)^{2}}{(|x|^{2+\alpha} + 1)^2},
\end{equation}
together with its rescaling
\begin{equation}\label{u00}
U_1(x) := U_{\alpha}\left(\frac{x}{\delta}\right) - (2+\alpha)\log\delta = \log \frac{2(2+\alpha)^{2}\delta^{2+\alpha}}{(|x|^{2+\alpha} + \delta^{2+\alpha})^{2}}.
\end{equation}
It is also known that when $\alpha$ is even and in particular for $\alpha=2(k-1)$ with $k\in\mathbb{N}$ and $k\geq2$, equation \eqref{01.040} admits also nonradial solutions (see Theorem 1.1 in \cite{PTG}). These solutions are  given in complex notation by \begin{equation}\label{u01}
U_2(x) := U_{0} \left( \frac{x^k - \rho^k}{\beta^k} \right) + \log \left( k^2 \right) - 2k \log \beta =\log \frac{8k^2\beta^{2k}}{(|x^k - \rho^k|^{2} + \beta^{2k})^2}.
\end{equation}
for $\beta, \rho > 0$, $k \in \mathbb{N}$,  where by $x^k$ we mean $x^k=|x|^k (\cos k \theta, \sin k\theta)$, and solves
\begin{equation}\label{01.0040}
-\Delta U_{2,p} =
|x|^{2k-2}e^{U_{2,p}}, \quad x \in \mathbb{R}^2.\end{equation}
Both $U_1$ and $U_2$ will be used in our ansatz for the solution to problem \eqref{1.0} to approximate it at a first order.

Here and in the rest of the paper, we use the complex notation to mean
\[
x^k = \left( \Re \left( (x_1 + ix_2)^k \right), \Im \left( (x_1 + ix_2)^k \right) \right) \in \mathbb{R}^2, \quad \text{for } x = (x_1, x_2) \in \mathbb{R}^2.
\]

We also agree to identify any positive real number $t > 0$ with its corresponding point on the positive real half-line
\[
t = t + i \cdot 0 = (t, 0) \in \mathbb{R}^2.
\]

\subsection*{ A refinement of the ansatz: the second and the third order terms.}
To obtain a sufficiently small error term  (as will be demonstrated in the next subsection), we refine the approximation of the solution by incorporating additional lower-order terms following the idea of \cite{PMA} and \cite{PT}.

Hence we consider entire radial solutions to the following problems in the whole \(\mathbb{R}^2\):
\begin{align*}
\Delta V_{\alpha} + |x|^{\alpha}e^{U_{\alpha}} V_{\alpha}& = |x|^{\alpha}e^{U_{\alpha}} f_1(U_{\alpha}) , \quad \Delta W_{\alpha} + |x|^{\alpha}e^{U_{\alpha}} W_{\alpha} = |x|^{\alpha}e^{U_{\alpha}} f_2(U_{\alpha},V_{\alpha}),
\\
\Delta V_{0} + e^{U_{0}} V_{0} &= e^{U_{0}} f_1(U_{0}), \quad \ \ \quad \quad \ \quad \Delta W_{0} + e^{U_{0}} W_{0} = e^{U_{0}} f_2(U_{0},V_{0}),
\end{align*}
where \( U_{\alpha} \) and \( U_{0} \) are defined in \eqref{1.040} 
 and \eqref{001.040}, and 
\begin{align*}
f_1(u) := \frac{u^2}{2} \quad \text{and} \quad f_2(u,v) := uv - \frac{u^3}{2} - \frac{1}{2} \left( v - \frac{u^2}{2} \right)^2.
\end{align*}
The choice of \( f_1 \) and \( f_2 \) is done in order to control the error term and comes from \cite{LIA} and \cite{PT}.

The existence of suitable radial solutions \( V_{\alpha}, V_{0}, W_{\alpha}, W_{0} \) with logarithmic behavior at infinity follows by Lemma A.10 in \cite{PT}. In particular, \( V_{\alpha}, V_{0} \) have the following asymptotic behavior, as \( |x| \) goes to \( +\infty \):

\begin{equation}\label{1.40-bis}
V_{\alpha}(x) = C_1 \log |x| + O \left( \frac{1}{|x|} \right), \quad \nabla V_{\alpha}(x) = C_1 \frac{x}{|x|^2} + O \left( \frac{1}{|x|^2} \right),
\end{equation}
\begin{equation}\label{1..40}
V_{0}(x) = C_3 \log |x| + O \left( \frac{1}{|x|} \right), \quad \nabla V_{0}(x) = C_3 \frac{x}{|x|^2} + O \left( \frac{1}{|x|^2} \right)
\end{equation}
and similarly
\begin{equation}\label{1.41}
W_{\alpha}(x) = C_2 \log |x| + O \left( \frac{1}{|x|} \right), \quad \nabla W_{\alpha}(x) = C_2 \frac{x}{|x|^2} + O \left( \frac{1}{|x|^2} \right),
\end{equation}
\begin{equation}\label{1..41}
W_{0}(x) = C_4 \log |x| + O \left( \frac{1}{|x|} \right), \quad \nabla W_{0}(x) = C_4 \frac{x}{|x|^2} + O \left( \frac{1}{|x|^2} \right),
\end{equation}
where
\begin{align}\label{C..i1}
C_1 &= \int_{0}^{+\infty} r^{1+\alpha}\frac{1-r^{2+\alpha}}
{1+r^{2+\alpha}} e^{U_{\alpha}(r)} f_1(U_{\alpha}(r))dr=2(2+\alpha)\big(\log2
+2\log(2+\alpha)-3\big), \notag\\
C_2 &= \int_{0}^{+\infty} r^{1+\alpha}\frac{1-r^{2+\alpha}}{1+r^{2+\alpha}}^{} e^{U_{\alpha}(r)} f_2(U_{\alpha}(r), V_{\alpha}(r))dr,\\
C_3 &= \int_{0}^{+\infty} r\frac{1-r^{2}}{1+r^{2}} e^{U_{0}(r)} f_1(U_{0}(r))dr=12\big(\log2-1\big), \notag\\
C_4 &= \int_{0}^{+\infty} r\frac{1-r^{2}}{1+r^{2}} e^{U_{0}(r)} f_2(U_{0}(r),V_{0}(r))dr,\notag
\end{align}
where $C_2$ and $C_4$ do not have an explicit form, but they are finite, since, by \eqref{001.040}, \eqref{1.040}, \eqref{1.40-bis} and \eqref{1..40},  the functions $U_{0}$, $U_{\alpha}$, $V_{\alpha}$ and $V_{0}$
 grow logarithmically at infinity.  Moreover, $e^{U_{0}}$ and $e^{U_{\alpha}}$ decay as $\frac{1}{r^{4}}$ and $\frac{1}{r^{4+2\alpha}}$ respectively, which, together with the fact that $f$ grows polynomially,  implies that $C_2$ and $C_4$ are finite. Moreover, by direct computation, we have $$C_4=C_{3}-\frac{1}{2}\displaystyle\int_{0}^{+\infty} r\frac{1-r^{2}}{1+r^{2}} e^{U_{0}(r)}
\left( V_{0}(r) - \frac{U_{0}^2(r)}{2} -U_{0}(r) \right)^2dr:=C_{3}-\frac{1}{2}C_{5}.$$
A numerical computation gives $C_{5}\sim-15.012$, so that $C_4\neq0$. The same argument gives $C_2\neq0$.

Next, we define, for $\delta>0$,
\begin{align}\label{gy11}
V_1(x) := V_{\alpha}\left(\frac{x}{\delta}\right), \ \  W_1(x) := W_{\alpha}\left(\frac{x}{\delta}\right),  \ \  V_2(x) := V_{0}\left(\frac{x^k - \rho^k}{\beta^k}\right),  \ \  W_2(x) := W_{0}\left(\frac{x^k - \rho^k}{\beta^k}\right),
\end{align}
that solve,  for \( x \in \mathbb{R}^2 \)
\begin{align}\label{VY1}
\Delta V_1(x) + |x|^{\alpha}e^{U_1(x)}V_1(x) = |x|^{\alpha} e^{U_1(x)}f_1\left(U_{\alpha}\left(
\frac{x}{\delta}\right)\right),
\end{align}
\begin{align}\label{WY1}
\Delta W_1(x) +|x|^{\alpha} e^{U_1(x)}W_1(x) = |x|^{\alpha} e^{U_1(x)}f_2\left(U_{\alpha}\left(\frac{x}
{\delta}\right), V_{\alpha}\left(\frac{x}{\delta}
\right)\right),
\end{align}
\begin{align}\label{VY2}
\Delta V_2(x) + |x|^{2k-2}e^{U_2(x)}V_2(x) = |x|^{2k-2}e^{U_2(x)}f_1\left(U_{0}\left(\frac{x^k - \rho^k}{\beta^k}\right)\right),
\end{align}
\begin{align}\label{WY2}
\Delta W_2(x) + |x|^{2k-2}e^{U_2(x)}W_2(x) = |x|^{2k-2}e^{U_2(x)}f_2\left(U_{0}\left(\frac{x^k - \rho^k}{\beta^k}\right), V_{0}\left(\frac{x^k - \rho^k}{\beta^k}\right)\right).
\end{align}
They will be all used to get extra lower order terms for the approximation of the solution.

\subsection*{The final ansatz: fixing the Dirichlet boundary conditions.}

We will look for \( k \)-symmetric solutions to \eqref{1.0} (i.e. solutions in the space defined in \eqref{1.3}) in the form
\begin{equation}\label{1.5}
u = \Upsilon + \phi,
\end{equation}
where \( \Upsilon \) is explicit and given by:
\begin{equation}\label{1.6}
\Upsilon := \Upsilon_{1} + \Upsilon_{2},
\end{equation}
for some \( \tau, \eta > 0 \),  with
\[
\Upsilon_{1} := \tau \left( PU_1 + \frac{PV_1}{p} + \frac{PW_1}{p^2} \right) \ \ , \ \   \Upsilon_{2} := - \tau \eta \left( PU_2 + \frac{PV_2}{p} + \frac{PW_2}{p^2} \right), \]
 where  \( U_1 \), \( U_2 \), \( V_1\), \( V_2\), \( W_1 \) and \( W_2 \) are the ones defined in \eqref{u00}, \eqref{u01} and \eqref{gy11} and we are considering their projections on \( H_0^1(\Omega) \). 
For any function \( \varphi \in H^1(\Omega) \), its projection on \( H_0^1(\Omega) \), denoted by \( P\varphi \), is characterized as the solution of
\begin{equation*}
-\Delta(P\varphi) = -\Delta\varphi,\ \  \text{ in} \ \Omega, \quad P\varphi = 0, \ \text{ on} \ \partial\Omega,\end{equation*} which, together with the definitions of  \( U_1 \), \( U_2 \),  \( V_1\), \( V_2\), \( W_1 \) and \( W_2 \), gives that  $\Upsilon$  is \( k \)-symmetric.
Moreover, from the definition of $\Upsilon$, we have that the function $\Upsilon$ depends on $\eta$, $\tau$, $\delta$, $\beta$ and $\rho$. In particular the parameter $\eta$ will be chosen close to \begin{equation}\label{n1}
 \eta_{\infty,k}= \frac{2(2+\alpha)}{2k-2-\alpha},
 \end{equation}
by Proposition \ref{e.3}.  All the remaining parameters go to 0, as $p\rightarrow\infty$,  as we can see by Lemma \ref{lem2.1}.

\subsection*{The reduction process.}
Our aim is to find a solution to \eqref{1.0} as in \eqref{1.5}, where the higher order term \(\phi \in H_{0,k}^{1}(\Omega)\) solves the equation
\begin{equation}\label{1.7}
\mathcal{E} + \mathcal{L}\phi + \mathcal{N}(\phi) = 0,
\end{equation}
and \(\mathcal{E}\) is the error term:
\begin{equation}\label{1.8}
\mathcal{E} := \Delta\Upsilon + |x|^{\alpha} |\Upsilon|^{p-1}\Upsilon.
\end{equation}
$\mathcal{L} \phi $ is the linear term:
\begin{equation}\label{1.9}
\mathcal{L} \phi := \Delta \phi + p |x|^{\alpha} |\Upsilon|^{p-1} \phi
\end{equation}
and \( \mathcal{N}(\phi) \) is the nonlinear term:
\begin{equation}\label{1.10}
\mathcal{N}(\phi) := |x|^{\alpha}|\Upsilon + \phi|^{p-1} (\Upsilon + \phi) - |x|^{\alpha}|\Upsilon|^{p-1} \Upsilon - p|x|^{\alpha} |\Upsilon|^{p-1} \phi .
\end{equation}
Here we quote a result we will need the sequel.
\begin{lemma}\cite[Lemma 3.5]{LIA}\label{pe.0}
For any real numbers \( a, b, c \) such that for some \( C > 0 \), one has
$$
|b| + |c| < C p,\quad \big( -1+ \frac{1}{C}\big)p\leq a \leq C,
$$
the following asymptotic expansions hold true, as \( p \) goes to \( +\infty \):
\begin{align}\label{pe.1}
&\left(1+\frac{a}{p} + \frac{b}{p^2} + \frac{c}{p^3} \right)^{p} =e^{a}\Bigg(
1+\frac{\big(b-\frac{a^{2}}{2}\big)}{p}
+ \frac{\Big(c-ab+\frac{a^{3}}{3}
+\frac{\big(b-\frac{a^{2}}{2}\big)^{2}
}{2}\Big)}{p^2}+O\Big(
\frac{1+a^{6}+b^{6}+c^{6}}{p^{3}}
\Big)
\Bigg).
\end{align}
\end{lemma}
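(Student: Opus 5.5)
The plan is to take logarithms and reduce everything to Taylor expansions with remainders controlled uniformly in the admissible range of $(a,b,c)$. Set
$$t:=\frac{a}{p}+\frac{b}{p^2}+\frac{c}{p^3},\qquad \Big(1+\frac ap+\frac b{p^2}+\frac c{p^3}\Big)^p=e^{p\log(1+t)} = e^{a}\,e^{E},\qquad E:=p\log(1+t)-a .$$

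\emph{Step 1: range of $t$.} Since $|b|+|c|<Cp$, we have $|b|/p^2+|c|/p^3\le C/p$. Combined with $(-1+\frac1C)p\le a\le C$, this gives
$$-1+\tfrac1{2C}\le t\le \tfrac{2C}{p}$$
for $p$ large. Hence $1+t$ stays in a compact subset of $(0,\infty)$ depending only on $C$. On that set we have the Taylor bound
$$\Big|\log(1+t)-t+\tfrac{t^2}{2}-\tfrac{t^3}{3}\Big|\le K_C\,t^4 .$$
We also have $E\le p\,t-a=b/p+c/p^2\le 2C$, because $\log(1+t)\le t$. So $E$ is bounded above uniformly, although it may be very negative when $|a|\sim p$.

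\emph{Step 2: expanding the exponent.} Insert $t$ into $p(t-\frac{t^2}{2}+\frac{t^3}{3})$ and sort by powers of $1/p$. This yields
$$E=\frac{X}{p}+\frac{Y}{p^2}+R,\qquad X=b-\frac{a^2}{2},\quad Y=c-ab+\frac{a^3}{3},$$
with $|R|\le K(1+a^6+b^6+c^6)/p^3$. All the cross terms, such as $b^2/p^3$, $ac/p^3$, $a^2b/p^3$, and $p\,t^4\lesssim (a^4+b^4+c^4)/p^3$, are absorbed using $|x|^j\le 1+x^6$ for $j\le 6$.

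\emph{Step 3: expanding the exponential.} Since $E\le 2C$, we have
$$|e^{E}-1-E-\tfrac{E^2}{2}|\le e^{2C}|E|^3 .$$
From Step 2, $|E|\le K(1+a^2+|b|+|c|)/p$, so $|E|^3\le K(1+a^6+b^6+c^6)/p^3$. Finally write
$$E+\frac{E^2}{2}=\frac Xp+\frac{Y+X^2/2}{p^2}+O\Big(\frac{1+a^6+b^6+c^6}{p^3}\Big),$$
where the discarded pieces are $R$, $XY/p^3$, $Y^2/p^4$, and so on, each bounded by the same polynomial weight. Since $Y+X^2/2=c-ab+\frac{a^3}{3}+\frac12\big(b-\frac{a^2}{2}\big)^2$, multiplying by $e^{a}$ gives exactly \eqref{pe.1}.

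\emph{Main obstacle.} The delicate point is the regime $a\approx -p$. There $t$ is not small, $X/p$ and $E$ are of order $p$, and a naive expansion of $e^E$ breaks down. The fix is the one-sided bound $E\le 2C$, which keeps the exponential remainder at $O(|E|^3)$ without smallness of $E$, together with $1+t\ge \frac1{2C}$, which keeps the logarithm's remainder controlled. In that regime the asserted error bound is $\gtrsim 1$ anyway. For instance, $a^2/p\le (1+a^6)/p^3$ once $|a|\ge \sqrt p$, so the estimate holds trivially there. If needed, I would split explicitly into $|a|+|b|+|c|\le \sqrt p$, handled by Steps 2–3 with genuinely small $E$, and the complementary case, handled by these crude bounds.
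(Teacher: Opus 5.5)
The paper gives no proof of this lemma: it is quoted from \cite[Lemma 3.5]{LIA} and used as a black box, so there is no in-paper argument to compare your route against. Your argument is correct and self-contained. You isolate the two right ingredients. First, the hypothesis $a\ge(-1+\frac1C)p$ keeps $1+t$ in a fixed compact subset of $(0,\infty)$, so the Taylor remainder of $\log(1+t)$ is uniformly $O(t^4)$ even when $t$ is not small. Second, the one-sided bound $E\le pt-a=b/p+c/p^2\le 2C$ controls the Lagrange remainder $e^{\xi}E^3/6$ of $e^{E}$ without any smallness of $E$.

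Some of the bookkeeping should be tightened.

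\emph{The bound on $|E|$.} Set $W:=(1+a^6+b^6+c^6)/p^3$. The estimate $|E|\le K(1+a^2+|b|+|c|)/p$ does not follow from Step 2 as written, because the crude bound $|R|\le KW$ can be much larger than $a^2/p$ when $|a|\gg\sqrt p$. Get it instead from $|\log(1+t)-t|\le Kt^2$ on the same interval, together with $|b|,|c|<Cp$.

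\emph{The cross term $RX/p$.} Among the discarded pieces of $E+E^2/2$ is $RX/p$. For $|a|\sim p$ one has $|X|/p\sim p$, so $|R|\le KW$ does not give $|RX/p|\le KW$. The split you mention at the end is therefore needed, not optional; the alternative is to track the monomial structure of $R$ and use $|a|,|b|,|c|\lesssim p$. With $S:=|a|+|b|+|c|$ the split closes cleanly.
\begin{itemize}
\item If $S\ge\sqrt p$, then $W\gtrsim S^6/p^3\ge\max\{1,S^2/p,S^4/p^2\}$. This dominates $e^{E}\le e^{2C}$, $1$, $|X|/p$ and $|Y+X^2/2|/p^2$ at once.
\item If $S\le\sqrt p$, then $|X|/p$, $|Y|/p^2$ and $|R|$ are all $O(1)$. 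Hence every product involving $R$ is $O(|R|)=O(W)$, while $XY/p^3$, $Y^2/p^4$ and $|E|^3$ are $O(W)$ by degree counting.
\end{itemize}

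\emph{Size of $E$.} In the regime $S\le\sqrt p$, $E$ is only bounded, not ``genuinely small'' (take $|a|=\sqrt p$). Boundedness is all the argument uses, so this does not affect correctness.
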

\section{the choice of the parameters and the estimates of the error}
In this section, we first determine the positive parameters appearing in the ansatz as functions of $p$,  (except for the parameter $\eta$), derive the relevant preliminary estimates, and estimate the error term in the framework of weighted norms.
\subsection*{The choice of parameters as functions of \(\eta\) and $p$.}\
Here we derive the parameters \(\tau,\delta,\beta,\rho\) as functions of the parameter $\eta$ and $p$.  Of course also the parameter $\eta$ will depend on $p$ and we will choose it in Proposition \ref{e.3}.  But, at this step, we only assume that $\eta\in\big[\eta_{\infty,k} - \varepsilon, \eta_{\infty,k} + \varepsilon\big],$ for a small $\varepsilon$, where $\eta_{\infty,k}$ is as defined in \eqref{n1}.
The parameters are defined by the following relations:
\begin{equation}\label{1.11}
\begin{cases}
\frac{\tau}{\delta^{2+\alpha}} = (p\tau)^p, \\[3mm]
\frac{k^2 \rho^{2k-2-\alpha}}{\beta^{2k}} \tau\eta= (p\tau\eta)^p ,\\[3mm]
\left(4 - \frac{C_3}{p} - \frac{C_4}{p^2}\right)\log \rho^{k\eta} -
\left(2(2+\alpha) - \frac{C_1}{p} - \frac{C_2}{p^2}\right)\log \delta - \log 2(2+\alpha)^{2} \\\quad
 - 2k\eta\pi \left(4 - \frac{C_3}{p} - \frac{C_4}{p^2}\right)H(0,0)
 + 2\pi \left(2(2+\alpha) - \frac{C_1}{p} - \frac{C_2}{p^2}\right)H(0,0) = p, \\[3mm]
\left(2(2+\alpha) - \frac{C_1}{p} - \frac{C_2}{p^2}\right)\log \rho^{\frac{1}{\eta}} -\frac{2}{\eta}\pi \left(2(2+\alpha) - \frac{C_1}{p} - \frac{C_2}{p^2}\right)H(0,0)- \log 8\\\quad-
\left(4 - \frac{C_3}{p} - \frac{C_4}{p^2}\right)\log \beta^k  + 2k\pi \left(4 - \frac{C_3}{p} - \frac{C_4}{p^2}\right)H(0,0) = p,
\end{cases}
\end{equation}
where $C_1$, $C_2$, $C_3$, $C_4$ are given by \eqref{1.40-bis}, \eqref{1..40}, \eqref{1.41} and \eqref{1..41}.

\begin{lemma}\label{lem2.1}
Assume $k$ satisfies \eqref{k.1} and $\eta\neq 1$. Then the parameters \(\tau, \delta, \beta, \rho\) satisfy, for $p$ large:
\begin{equation*}
\begin{cases}
\tau = \frac{\sqrt{e}}{p} \eta^{
        -\frac{2k\eta
       }
        {(2k\eta-2-\alpha)(1+\frac{1}{\eta})}}
        +O\left(\frac{\log p}{p^{2}}\right), \\[4mm]
\rho = e^{\frac{ \log \eta}{(2k\eta-2-\alpha)(1+\frac{1}{\eta})}p} \left(K_1(\eta) + O \left(\frac{1}{p}\right)\right), \\[4mm]
\delta = e^{-\frac{p}{2(2+\alpha)}
        \left(1-\frac{4k\eta\log\eta}
        {(2k\eta-2-\alpha)(1+\frac{1}
       {\eta})}
        \right)}\left(K_2(\eta)+
        O\left(\frac{1}{p}\right)\right), \\[4mm]
\beta = e^{-\frac{p}{4k}
        \left(1-\frac{ 2\log \eta}
        {(\frac{2k\eta}{2+\alpha}-1)
        (1+\eta)}
        \right)}\left(K_3(\eta)
        +O\left(\frac{1}{p}\right)\right),
\end{cases}
\end{equation*}
where \( K_i(\eta) \) are positive constants, smoothly depending on \(\eta\), which are uniformly bounded when \( \frac{2+\alpha}{2k} < C \leq \eta \leq C' < 1 \).
The estimates are uniform for $\eta\in\big[\eta_{\infty,k} - \epsilon, \eta_{\infty,k} + \epsilon\big],$  for a small $\varepsilon$, 
and the same estimates hold true for the derivative in \(\eta\) of all the parameters.
\end{lemma}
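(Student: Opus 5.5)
The plan is to pass to logarithmic unknowns, reduce \eqref{1.11} to a triangular linear system at leading order in $p$, solve it explicitly, and recover the full solution by a perturbation (implicit function) argument. Set
\[
\tau=\frac{e^{t}}{p},\qquad \delta=e^{pd},\qquad \beta=e^{pb},\qquad \rho=e^{pr},
\]
with unknowns $(t,d,b,r)$ expected to be $O(1)$. In these variables \eqref{1.11} becomes a system that is affine in $(t,d,b,r)$ up to terms of order $\frac{\log p}{p}$. The $\log p$ contributions come from $\log\tau=t-\log p$. The $O(1/p)$ contributions come from the corrections $C_i/p$ and $C_i/p^2$ multiplied by $pd$, $pb$, $pr$, and from the constants $H(0,0)$, $\log 8$, $\log 2(2+\alpha)^2$ and $\log k^2$.

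Dividing each equation by $p$ and sending $p\to\infty$ gives the limiting system
\[
(2+\alpha)d=-t,\qquad (2k-2-\alpha)r-2kb=t+\log\eta,\qquad 4k\eta r-2(2+\alpha)d=1,\qquad \tfrac{2(2+\alpha)}{\eta}r-4kb=1 .
\]
This system is solved by elimination.
\begin{itemize}
\item The first and third equations give $t=\tfrac12-2k\eta r$.
\item Subtracting twice the second equation from the fourth eliminates $b$.
\item Substituting $t$ from the first step then yields
\[
\bigl(2k\eta^2+(2k-2-\alpha)\eta-(2+\alpha)\bigr)r=\eta\log\eta .
\]
\end{itemize}
Since $2k\eta^2+(2k-2-\alpha)\eta-(2+\alpha)=(2k\eta-2-\alpha)(\eta+1)$, this gives
\[
r=\frac{\log\eta}{(2k\eta-2-\alpha)(1+\frac1\eta)}.
\]
Then $t=\tfrac12-\frac{2k\eta\log\eta}{(2k\eta-2-\alpha)(1+1/\eta)}$, $d=-t/(2+\alpha)$, and $b$ follows from the fourth equation. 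These are exactly the exponents in the statement, and $\tau=\frac{\sqrt e}{p}\,\eta^{-\frac{2k\eta}{(2k\eta-2-\alpha)(1+1/\eta)}}$ to leading order. The leading-order system is uniquely solvable as long as $2k\eta\neq 2+\alpha$. Near $\eta_{\infty,k}$ this holds, since $2k\eta_{\infty,k}-2-\alpha=\frac{(2+\alpha)(2k+2+\alpha)}{2k-2-\alpha}>0$ by \eqref{k.1}. The hypothesis $\eta\ne1$ guarantees $\log\eta\neq0$, so $\rho\to0$ genuinely at an exponential rate.

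For the full system, write $t=t_0(\eta)+\tilde t$, $d=d_0(\eta)+\tilde d/p$, $b=b_0(\eta)+\tilde b/p$, $r=r_0(\eta)+\tilde r/p$. The multiplication by $1/p$ for $d,b,r$ is what produces the factors $K_i(\eta)+O(1/p)$ in the exponentials.
\begin{itemize}
\item Plugging in, the equations for $(\tilde t,\tilde d,\tilde b,\tilde r)$ form the same invertible linear matrix plus a perturbation of size $O(\frac{\log p}{p})$.
\item The matrix is uniformly invertible for $\eta$ in the compact interval $[\eta_{\infty,k}-\varepsilon,\eta_{\infty,k}+\varepsilon]$, and the data depend smoothly on $\eta$.
\item The contraction mapping principle (or implicit function theorem in the variable $1/p$, treating $\log p/p$ as a small parameter) therefore gives a unique solution.
\item The constants $K_i(\eta)$ are explicit exponentials of the $O(1)$ order terms: $H(0,0)$, $\log 8$, $\log 2(2+\alpha)^2$, $\log k^2$, $\log\eta$, and the $C_1,C_3$ corrections multiplied by the leading exponents. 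Hence they are smooth and uniformly bounded in $\eta$.
\item The remainder for $\tau$ is $O(\log p/p^2)$, because the $\log p$ from $\log\tau$ enters $t$ only at relative order $1/p$.
\end{itemize}

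The estimates for $\partial_\eta$ follow by differentiating the system in $\eta$. The derivative of the solution solves the same uniformly invertible linear system with smooth right-hand side, which gives identical bounds. The main obstacle is bookkeeping the mixed orders. Terms such as $\frac{C_1}{p}\log\delta$ are $O(1)$, not small, and must be placed into $K_i$. The $\log p$ terms must be kept only in $\tau$'s remainder and not allowed to spoil the $O(1/p)$ relative error claimed for $\rho,\delta,\beta$. I expect this to require absorbing $\log p$ into the definition of the unknowns, for instance writing $\log(p\tau)$ instead of $\log\tau$ in the second equation.
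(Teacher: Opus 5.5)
Your leading-order elimination is correct and is the same linear algebra the paper performs on the logarithms of the parameters. The limiting system, the factorization $(2k\eta-2-\alpha)(\eta+1)$ and all four exponents check out.

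\textbf{The gap.} It lies in the step that produces $K_i(\eta)+O(1/p)$, which is the only part of the statement beyond leading order. A perturbation of size $O(\log p/p)$ in the rescaled variables $(t,d,b,r)$ only gives $\log\rho=r_0p+O(\log p)$, and similarly for $\delta,\beta$. That does not even produce constants $K_i$.

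At the next order, your correction system is not a small perturbation of the limiting one. With $t=t_0+\tilde t$ and $d=d_0+\tilde d/p$, the first equation of \eqref{1.11} reads exactly $(p-1)\tilde t+(2+\alpha)\tilde d=t_0-\log p$. The second contains $-(p-1)\tilde t$ on the left and $+\log p$ on the right. So in the natural unknowns $(p\tilde t,\tilde d,\tilde b,\tilde r)$ the sources $\mp\log p$ are unbounded. Uniform invertibility of the matrix then gives nothing better than $\tilde d,\tilde b,\tilde r=O(\log p)$.

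Your suggested remedy does not remove these sources: $\log(p\tau)$ is already your variable $t$, and the $\mp\log p$ are still there.

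\textbf{The missing observation.} $\tau$ and $p\log p$ enter the first two equations of \eqref{1.11} only through the common combination $p\log p+(p-1)\log\tau$. Dividing the second equation by the first gives
\[
\beta^{2k}=k^2\eta^{1-p}\delta^{2+\alpha}\rho^{2k-2-\alpha},
\]
which is free of both $\tau$ and $\log p$. Equivalently, the $\log p$-source vector is an exact multiple of the $\log\tau$-column, for every $p$, so it is absorbed entirely by $\log\tau$.

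Since \eqref{1.11} is exactly linear in $(\log\tau,\log\delta,\log\beta,\log\rho)$, no fixed-point argument is needed. Consider the $3\times3$ system formed by this relation and the last two equations of \eqref{1.11}.
\begin{itemize}
\item Its matrix is $A_0(\eta)+O(1/p)$, with $\det A_0=\frac{8k(2+\alpha)}{\eta}(2k\eta-2-\alpha)(\eta+1)\neq0$.
\item Its right-hand side is $p\,v_1(\eta)+v_0(\eta)+O(1/p)$, with no $\log p$.
\end{itemize}
Cramer's rule, or a Neumann expansion of the inverse, then gives $\log\rho=r_0p+\log K_1+O(1/p)$, and likewise for $\delta$ and $\beta$, with smooth dependence on $\eta$.

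Finally, $\log\tau=-\frac{p\log p+(2+\alpha)\log\delta}{p-1}$ is the only place $\log p$ reappears, and it yields the $O(\log p/p^2)$ remainder for $\tau$. This is exactly what the paper does in passing from \eqref{1.12} to \eqref{1.13}: there $p\log p+(p-1)\log\tau$ appears identically in both lines and cancels on subtraction.
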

\begin{proof}
Let $t=\frac{4 - \frac{C_3}{p} - \frac{C_4}{p^2}}{2(2+\alpha) - \frac{C_1}{p} - \frac{C_2}{p^2}}$.
 From \eqref{1.11}, we have
    \begin{equation*}
\begin{cases}
\left(2(2+\alpha) - \frac{C_1}{p} - \frac{C_2}{p^2}\right)\log \frac{\rho^{k\eta t}}{\delta}  - \log 2(2+\alpha)^{2}
 - 2(k\eta t- 1)\pi \left(2(2+\alpha) - \frac{C_1}{p} - \frac{C_2}{p^2}\right)H(0,0) = p, \\[3mm]
\left(2(2+\alpha) - \frac{C_1}{p} - \frac{C_2}{p^2}\right)\log \frac{\rho^{\frac{1}{\eta}}}{\beta^{kt}} - \log 8  + 2\left(kt-\frac{1}{\eta}\right)\pi \left(2(2+\alpha) - \frac{C_1}{p} - \frac{C_2}{p^2}\right)H(0,0) = p.
\end{cases}
\end{equation*}
 Then \eqref{1.11} can be transformed into
    \begin{equation}\label{1.12}
        \begin{cases}
            -(2+\alpha)\log\delta = p\log p + (p-1)\log\tau, \\
            (2k-2-\alpha)\log\rho - 2k\log\beta = p\log p + (p-1)(\log\tau + \log\eta) - \log(k^2), \\
            2(2+\alpha)k\eta t\log\rho - 2(2+\alpha)\log\delta = p + \frac{C_1}{2(2+\alpha)} + 4(2+\alpha)(k\eta t-1)\pi H(0,0) + \log 2(2+\alpha)^{2} + O\left(\frac{1}{p}\right), \\
            \frac{2(2+\alpha)}{\eta}\log\rho - 2(2+\alpha)kt\log\beta = p + \frac{C_1}{2(2+\alpha)} - 4(2+\alpha)(kt-\frac{1}{\eta})\pi H(0,0) + \log 8 + O\left(\frac{1}{p}\right).
        \end{cases}
    \end{equation}
Then, we can get the following two equalities:
    \begin{equation}\label{1.13}
        \begin{cases}
            (2+\alpha)kt\eta\log\rho = \frac{p}{2} - p\log p - (p-1)\log\tau + \frac{C_1}{4(2+\alpha)} + 2(2+\alpha)(kt\eta-1)\pi H(0,0) \\
            \quad + \frac{\log 2(2+\alpha)^{2}}{2} + O\left(\frac{1}{p}\right), \\
            \left(\frac{2}{t\eta} - 2k + 2+\alpha\right)\log\rho = \frac{p}{(2+\alpha)t} - p\log p - (p-1)(\log\tau + \log\eta) \\
            \quad -\frac{4}{t}(kt-\frac{1}{\eta})\pi H(0,0) + \frac{C_1}{2t(2+\alpha)^{2}} + \frac{\log 8}{(2+\alpha)t} + 2\log k + O\left(\frac{1}{p}\right),
        \end{cases}
    \end{equation}
which gives
    \begin{align*}
        \log\rho &= \frac{\log\eta^{(p-1)}
        +\frac{\left(t(2+\alpha)-2\right) \left(p+\frac{C_1}{2(2+\alpha)}\right)
        }{2t(2+\alpha)}+
        \frac{\log 2(2+\alpha)^{2}}{2}-
        \frac{\log 8}{(2+\alpha)t}
         - \log k^{2} + O\left(\frac{1}{p}\right)}{(2+\alpha)kt\eta - \frac{2}{t\eta} + 2k -2- \alpha} + 2\pi H(0,0)\\
        &=  \frac{\log\eta^{(p-1)}+\frac{\left(t(2+\alpha)-2\right) \left(p+\frac{C_1}{2(2+\alpha)}\right)
        }{2t(2+\alpha)}+
        \frac{\log 2(2+\alpha)^{2}}{2}-
        \frac{\log 8}{(2+\alpha)t}
         - \log k^{2} }{(kt\eta-1)
        (2+\alpha+\frac{2}
{t\eta})}+ 2\pi H(0,0) + O\left(\frac{1}{p}\right).
    \end{align*}
    In view of the definition of $t$, we have $t=\frac{2}{2+\alpha}+O\left(\frac{1}
    {p}\right)$,  so that 
    \begin{align*}
        \rho =e^{\frac{ \log \eta}{(2k\eta-2-\alpha)(1+\frac{1}{\eta})}p} \left(K_1(\eta) + O \left(\frac{1}{p}\right)\right),
   \end{align*}
    which gives the estimate for \(\rho\). Then,  using again \eqref{1.12}, we have
    \begin{align*}
        \delta &= \rho^{kt\eta}e^{-\frac{p}{2(2+\alpha)}
        -\frac{C_1}{4(2+\alpha)^{2}}-2(kt\eta-1)\pi H(0,0)-\frac{\log 2(2+\alpha)^{2}}{2(2+\alpha)}+O\left(\frac{1}{p}\right)} \\
        &= \left(e^{\frac{ \log \eta}{(2k\eta-2-\alpha)(1+\frac{1}{\eta})}p}
         \left(K_1(\eta)+O\left(\frac{1}{p}\right)\right)
         \right)^{kt\eta}e^{-\frac{p}{2(2+\alpha)}}
        e^{-\frac{C_1}{4(2+\alpha)^{2}}-2(k\eta-1)\pi H(0,0)-\frac{\log 2(2+\alpha)^{2}}{2(2+\alpha)}}
       \left(1+O\left(\frac{1}{p}\right)\right) \\
        &=e^{-\frac{p}{2(2+\alpha)}}
       e^{\frac{ \frac{2k\eta}{2+\alpha}p\log \eta}{(2k\eta-2-\alpha)(1+\frac{1}
       {\eta})}}\left(K_2(\eta)+
        O\left(\frac{1}{p}\right)\right)
       \\
        &= e^{-\frac{p}{2(2+\alpha)}
        \left(1-\frac{4k\eta\log\eta}
        {(2k\eta-2-\alpha)(1+\frac{1}
       {\eta})}
        \right)}\left(K_2(\eta)+
        O\left(\frac{1}{p}\right)\right).
    \end{align*}
    From $\frac{1}{t}=\frac{2+\alpha}{2}+O\left(\frac{1}{p}\right)$,
     we can obtain
    \begin{align*}
        \beta &= \rho^{\frac{1}{kt\eta}}e^{-\frac{p}
        {2(2+\alpha)kt}
        -\frac{C_1}{4kt(2+\alpha)^{2}}+
        2\frac{kt-\frac{1}{\eta}}{kt}\pi H(0,0)-\frac{\log 8}{2tk(2+\alpha)}+O\left(\frac{1}{p}\right)} \\
        &= e^{\frac{ p\log \eta}
        {2k\eta(\frac{2k\eta}{2+\alpha}-1)
        (1+\frac{1}{\eta})}
        }e^{-\frac{p}
        {2(2+\alpha)kt}
        -\frac{C_1}{4kt(2+\alpha)^{2}}+
        2\frac{kt-\frac{1}{\eta}}{kt}\pi H(0,0)-\frac{\log 8}{2tk(2+\alpha)}}
       \left(K_1(\eta)+O\left(\frac{1}
        {p}\right)\right)^{\frac{1}{kt\eta}}\left(1+O\left
        (\frac{1}{p}\right)\right) \\
        &= e^{-\frac{p}{4k}
        \left(1-\frac{ 2\log \eta}
        {(\frac{2k\eta}{2+\alpha}-1)
        (1+\eta)}
        \right)}\left(K_3(\eta)
        +O\left(\frac{1}{p}\right)\right)
    \end{align*}
    and
    \begin{align*}
        \tau &= p^{-\frac{p}{p-1}}\delta^{-\frac{2+\alpha}{p-1}} \\
        &= p^{-\frac{p}{p-1}}e^{\frac{p}{2(p-1)}
        \left(1-\frac{4k\eta\log\eta}
        {(2k\eta-2-\alpha)(1+\frac{1}{\eta})}\right)}
        \left(1+O\left(\frac{1}{p}\right)
        \right)\\
        &= \frac{\sqrt{e}}{p} \eta^{
        -\frac{2k\eta
       }
        {(2k\eta-2-\alpha)(1+\frac{1}{\eta})}}
        +O\left(\frac{\log p}{p^{2}}\right).
    \end{align*}
\end{proof}
\subsection*{ Some crucial preliminary estimates.}
\begin{lemma}\label{lemd.1} Assume \( \alpha>0 \) is fixed and   \( k \) satisfies \eqref{k.1},
then there exists \( \epsilon > 0 \) such that if
\begin{align}\label{x.1}
\eta_{\infty,k} - \epsilon \leq \eta \leq \eta_{\infty,k} + \epsilon,
\end{align}
where \( \eta_{\infty,k} \) is defined in \eqref{n1}, then
\begin{align}\label{x.2}
\frac{2+\alpha}{k} < \eta < 1,
\end{align}
\begin{align}\label{x.3}
\frac{1}{2} + \frac{\log \eta}{\eta+1} > 0,
\end{align}
\begin{align}\label{x.4}
\frac{1}{2} + \frac{2k\eta^2}{(2k\eta-2-\alpha)(\eta+1)} \log \eta > 0
\end{align}
and
\begin{align}\label{x.5}
1 + \frac{(2k+2+\alpha)\eta}{(2k\eta-2-\alpha)(\eta+1)} \log \eta > 0.
\end{align}
\end{lemma}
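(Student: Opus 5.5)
The plan is to check the four strict inequalities \eqref{x.2}--\eqref{x.5} at the single point $\eta=\eta_{\infty,k}=\frac{2(2+\alpha)}{2k-2-\alpha}$. Each side is a continuous function of $\eta$ near $\eta_{\infty,k}$: the denominators $2k\eta-2-\alpha$ and $\eta+1$ do not vanish there. So strict inequalities at $\eta_{\infty,k}$ persist on some interval $[\eta_{\infty,k}-\epsilon,\eta_{\infty,k}+\epsilon]$, and we take the smallest of the four resulting $\epsilon$'s. We use the normalized variable $s:=\frac{2k}{2+\alpha}$. Then \eqref{k.1} gives $3<s<t$, since $k\le\lceil k_0\rceil-1<k_0=\frac{2+\alpha}{2}t$. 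Moreover $\eta_{\infty,k}=\frac{2}{s-1}$.

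For \eqref{x.2}, the inequality $\eta_{\infty,k}<1$ is equivalent to $2(2+\alpha)<2k-2-\alpha$, that is $k>\frac{3(2+\alpha)}{2}$, which is the lower bound in \eqref{k.1}. The inequality $\eta_{\infty,k}>\frac{2+\alpha}{k}$ is equivalent to $2k>2k-2-\alpha$, which is trivial. Hence $\log\eta_{\infty,k}<0$ throughout.

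Next we record two algebraic identities:
\[
2k\eta_{\infty,k}-2-\alpha=\frac{(2+\alpha)(2k+2+\alpha)}{2k-2-\alpha},\qquad
\eta_{\infty,k}+1=\frac{2k+2+\alpha}{2k-2-\alpha}.
\]
Substituting them gives
\[
\frac{\log\eta_{\infty,k}}{\eta_{\infty,k}+1}=\frac{2k-2-\alpha}{2k+2+\alpha}\log\eta_{\infty,k},
\qquad
\frac{(2k+2+\alpha)\eta_{\infty,k}}{(2k\eta_{\infty,k}-2-\alpha)(\eta_{\infty,k}+1)}=\frac{2(2k-2-\alpha)}{2k+2+\alpha}.
\]
Therefore \eqref{x.3} and \eqref{x.5}, the latter after dividing by $2$, both reduce to
\[
g(k):=\frac12+\frac{2k-2-\alpha}{2k+2+\alpha}\log\frac{2(2+\alpha)}{2k-2-\alpha}>0 .
\]
By Remark \ref{r1.2}, $g$ is strictly decreasing and $g(k_0)=0$ by \eqref{k0}. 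Since $k<k_0$, we get $g(k)>0$.

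For \eqref{x.4}, the same substitution gives
\[
\frac{2k\eta_{\infty,k}^2}{(2k\eta_{\infty,k}-2-\alpha)(\eta_{\infty,k}+1)}=\frac{8k(2+\alpha)}{(2k+2+\alpha)^2}=\frac{4s}{(s+1)^2}.
\]
So we need $\frac12+\frac{4s}{(s+1)^2}\log\frac{2}{s-1}>0$ for $s\in(3,t)$. This is the step I expect to need the most care, because comparison with $g$ alone does not cover the whole range. I would split into two cases.

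In the first case, $4s/(s+1)^2\le (s-1)/(s+1)$, i.e. $s^2-4s-1\ge0$, i.e. $s\ge 2+\sqrt5$. Since $\log\eta_{\infty,k}<0$, the expression is then at least $g(k)>0$.

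In the second case, $3<s<2+\sqrt5$. Here we use $\frac{4s}{(s+1)^2}\le 1$ together with $\log\frac{2}{s-1}<0$, so the expression is at least $\frac12+\log\frac{2}{s-1}$. This is positive as long as $s-1<2\sqrt e\approx 3.297$, which certainly holds for $s<2+\sqrt5\approx4.236$.

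The two cases cover $(3,t)$, so \eqref{x.4} holds at $\eta_{\infty,k}$. Continuity then yields the common $\epsilon$.
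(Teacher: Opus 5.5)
Your proof is correct. For \eqref{x.2}, \eqref{x.3} and \eqref{x.5} it coincides with the paper's argument: evaluate at $\eta_{\infty,k}$, simplify using the expressions for $2k\eta_{\infty,k}-2-\alpha$ and $\eta_{\infty,k}+1$, reduce \eqref{x.3} and \eqref{x.5} to $g(k)>0$ via $g(k_0)=0$ and monotonicity of $g$, and conclude by continuity.

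The two routes differ only for \eqref{x.4}. The paper does not split into cases. It asserts that $k\mapsto \frac{8k(2+\alpha)}{(2k+2+\alpha)^2}\log\frac{2(2+\alpha)}{2k-2-\alpha}$ is decreasing, which moves the problem to the endpoint $k_0$ (i.e.\ $s=t$). There it uses $\frac{4t}{(t+1)^2}<\frac{t-1}{t+1}$, i.e.\ $t>2+\sqrt5$, together with $g(k_0)=0$.

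That monotonicity does hold on the relevant range, but it is not automatic: the coefficient is positive and decreasing while the logarithm is negative and decreasing. Checking it amounts to $(s-1)^2\log\frac{s-1}{2}<s(s+1)$ on $(3,t)$, which the paper does not verify.

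Your case split at $s=2+\sqrt5$ avoids this. For $s\ge 2+\sqrt5$ you compare coefficients at the same $k$ and fall back on $g(k)>0$. For $3<s<2+\sqrt5$ the crude bound $\frac{4s}{(s+1)^2}\le 1$ together with $1+\sqrt5<2\sqrt e$ suffices.

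As a result, your argument is self-contained and does not even depend on where $t$ lies relative to $2+\sqrt5$, at the cost of one explicit numerical comparison. The paper's version is shorter on the page but rests on an unproved monotonicity claim and on the numerical value of $t$.
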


\begin{proof} We just need to verify that all inequalities hold for $ \eta = \eta_{\infty,k} $ and then use the continuity with respect to $ \eta$. From \eqref{n1} and \eqref{k.1}, we can derive
 \begin{equation*}
1=
 \frac{1}{\frac{3}{2}
-\frac{1}{2}}
>\eta_{\infty,k}= \frac{1}{\frac{k}{2+\alpha}-\frac{1}{2}}
>\frac{2+\alpha}{k}
 \end{equation*}
and this proves \eqref{x.2}.

Next we observe that
\begin{align*}
\frac{1}{2} + \frac{\log \eta_{\infty,k}}{\eta_{\infty,k} + 1}
= \frac{1}{2} + \frac{2k-2-\alpha}{2k+2+\alpha} \log \frac{2(2+\alpha)}{2k-2-\alpha}
\end{align*}
and the last function is strictly decreasing with respect to $k$.
Since, by \eqref{k.1} and \eqref{0k0}, we can get $ k < k_0 $ and  $ k_0 $  is the unique solution to equation \eqref{k0}, then \eqref{x.3} follows.

In a similar manner, we have that
\begin{align*}
\frac{1}{2} + \frac{2k\eta_{\infty,k} ^2}{(2k\eta_{\infty,k}-2-\alpha)
(\eta_{\infty,k}+1)} \log \eta_{\infty,k} &=\frac{1}{2} +\frac{8k(2+\alpha)}{(2k+2+\alpha)^{2}}
\log \frac{2(2+\alpha)}{2k-2-\alpha}
\\&
\geq \frac{1}{2} +\frac{8k_0(2+\alpha)}{(2k_0+2+\alpha)^{2}}
\log \frac{2(2+\alpha)}{2k_0-2-\alpha}
\\&
> \frac{1}{2} + \frac{2k_0-2-\alpha}{(2k_0+2+\alpha)}
\log \frac{2(2+\alpha)}{2k_0-2-\alpha}
\\&
= 0,\end{align*}
where we make use of the facts that $\frac{8k(2+\alpha)}{(2k+2+\alpha)^{2}}
\log \frac{2(2+\alpha)}{2k-2-\alpha} $ is decreasing and \( k_0 \)  is the solution to equation \eqref{k0}. This proves \eqref{x.4}.

Finally, from \eqref{x.3}, we have
\begin{align*}
1 + \frac{(2k+2+\alpha)\eta_{\infty,k}}
{(2k\eta_{\infty,k}-2-\alpha)(
\eta_{\infty,k}+1)} \log \eta_{\infty,k} =1+\frac{2\log \eta_{\infty,k}}{\eta_{\infty,k}+1}=2\left( \frac{1}{2} + \frac{\log \eta_{\infty,k}}{\eta_{\infty,k} + 1}
\right)>0 ,
\end{align*}
so \eqref{x.5} holds true.
\end{proof}

\begin{lemma}\label{lem2.x}Assume $ \alpha>0$ is fixed,  $k$ satisfies \eqref{k.1} and $\eta $ as in \eqref{x.1}. Then there exists \(\varepsilon > 0\) such that, for \(p\) large enough,

\begin{align}\label{x.7}
\frac{\delta}{\rho} \leq Ce^{-\varepsilon p}, \end{align}

\begin{align}\label{x.8}
\frac{\beta}{\rho} \leq Ce^{-\varepsilon p}, \end{align}

\begin{align}\label{x.9}
\frac{\delta}{\rho^{\frac{4k\eta}{2+\alpha}}} \leq Ce^{-\varepsilon p}. \end{align}

Moreover, there exists \(\theta_0 > 0\) such that, if \(0 < \theta \leq \theta_0\), then

\begin{align}\label{x.10}
\frac{\delta^{1-\theta}}{\rho} \leq Ce^{-\frac{\varepsilon}{2}p}, \end{align}

\begin{align}\label{x.11}
\frac{\beta^{1-\theta}}{\rho} \leq Ce^{-\frac{\varepsilon}{2}p}. \end{align}
\end{lemma}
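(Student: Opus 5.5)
The plan is to read off the exponential rates of $\delta,\beta,\rho$ from Lemma \ref{lem2.1} and check that each of the ratios in \eqref{x.7}--\eqref{x.9} has a strictly negative exponential rate. The signs of these rates will reduce exactly to the inequalities of Lemma \ref{lemd.1}.

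\textbf{Rates.} By Lemma \ref{lem2.1} we write
\[
\rho=e^{-rp}\big(K_1(\eta)+O(1/p)\big),\qquad
\delta=e^{-dp}\big(K_2(\eta)+O(1/p)\big),\qquad
\beta=e^{-bp}\big(K_3(\eta)+O(1/p)\big),
\]
where
\[
r:=-\frac{\eta\log\eta}{(2k\eta-2-\alpha)(\eta+1)} .
\]
Note that $r>0$, because \eqref{x.2} gives $\eta<1$ and $2k\eta>2(2+\alpha)>2+\alpha$. A direct rewriting of the exponents in Lemma \ref{lem2.1} gives
\[
d=\frac{1}{2(2+\alpha)}+\frac{2k\eta}{2+\alpha}\,r,\qquad
b=\frac{1}{4k}+\frac{2+\alpha}{2k\eta}\,r .
\]
Hence
\begin{align*}
d-r&=\frac{1}{2+\alpha}\Big(\frac12-\frac{\eta\log\eta}{\eta+1}\Big)>0 \quad(\text{since }\log\eta<0),\\
b-r&=\frac{1}{2k}\Big(\frac12+\frac{\log\eta}{\eta+1}\Big)>0 \quad\text{by \eqref{x.3}},\\
d-\frac{4k\eta}{2+\alpha}\,r&=\frac{1}{2+\alpha}\Big(\frac12+\frac{2k\eta^2\log\eta}{(2k\eta-2-\alpha)(\eta+1)}\Big)>0 \quad\text{by \eqref{x.4}}.
\end{align*}

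\textbf{Uniformity in $\eta$.} These three quantities are continuous functions of $\eta$ on the compact interval \eqref{x.1}, so each has a positive minimum there. We take $2\varepsilon$ to be the smallest of these three minima. The constants $K_i(\eta)$ are positive, smooth, and uniformly bounded above and below on this interval. Therefore, for $p$ large, each ratio in \eqref{x.7}--\eqref{x.9} is bounded by $Ce^{-2\varepsilon p+O(1)}\le Ce^{-\varepsilon p}$.

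\textbf{Estimates \eqref{x.10}--\eqref{x.11}.} We write
\[
\frac{\delta^{1-\theta}}{\rho}=\frac{\delta}{\rho}\,\delta^{-\theta}\le Ce^{-\varepsilon p}\,e^{\theta d p}\,C',
\]
and similarly for $\beta$. Let $D$ be the maximum of $d$ and $b$ over the interval \eqref{x.1}. Choosing $\theta_0=\varepsilon/(2D)$ then gives the bound $Ce^{-\varepsilon p/2}$ for all $0<\theta\le\theta_0$.

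The only real work is the algebraic step: checking that the exponents from Lemma \ref{lem2.1} rearrange into the three bracketed expressions above, i.e.\ into the quantities controlled by Lemma \ref{lemd.1}. Everything else is continuity and compactness in $\eta$.
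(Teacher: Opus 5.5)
Your proposal is correct and follows essentially the same route as the paper. You read off the exponential rates of $\delta,\beta,\rho$ from Lemma \ref{lem2.1}, reduce the signs of the three rate differences to $\log\eta<0$, \eqref{x.3} and \eqref{x.4}, and then absorb $\delta^{-\theta}$ and $\beta^{-\theta}$ by choosing $\theta_0$ small. The only difference is how uniformity in $\eta$ is obtained: you use continuity and compactness on the interval \eqref{x.1}, whereas the paper evaluates the exponents at the endpoint $\eta_{\infty,k}-\epsilon$, which implicitly assumes monotonicity in $\eta$. Your version is therefore slightly cleaner.
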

\begin{proof}
Let us choose
\begin{align*}
\varepsilon := \frac{1}{2(2+\alpha)} \min \left\{ 1, \frac{(2+\alpha)}{k} \left( \frac{1}{2} + \frac{\log(\eta_{\infty,k}-\epsilon)}{
\eta_{\infty,k}-\epsilon+1} \right), 1 + \frac{4k(\eta_{\infty,k}-\epsilon)^2
\log(\eta_{\infty,k}-\epsilon)}
{(2k(\eta_{\infty,k}-\epsilon)-2-\alpha)
(\eta_{\infty,k}-\epsilon+1)} \right\}
\end{align*}
where $\epsilon$ is as in \eqref{x.1}. 
By Lemma \ref{lem2.1} and  \eqref{x.2}, we can get
\begin{align*}
\frac{\delta}{\rho} &\leq Ce^{-\frac{p}{2(2+\alpha)}
        \left(1-\frac{4k\eta\log\eta}
        {(2k\eta-2-\alpha)(1+\frac{1}
       {\eta})}
        \right)} e^{-\frac{ \log \eta}{(2k\eta-2-\alpha)(1+\frac{1}{\eta})}p} \\& \leq Ce^{-\frac{p}{2(2+\alpha)} }
        e^{\frac{ \log \eta}{(2k\eta-2-\alpha)
        (1+\frac{1}{\eta})}(\frac{4k\eta }{2(2+\alpha)}-1)p}
        \leq Ce^{-\frac{p}{2(2+\alpha)}},
\end{align*}
for large \(p\), that is \eqref{x.7}.

Combining Lemma \ref{lem2.1}  and \eqref{x.3}, we can deduce
\begin{align*}
\frac{\beta}{\rho} &\leq Ce^{-\frac{p}{4k}
        \left(1-\frac{ 2\log \eta}
        {(\frac{2k\eta}{2+\alpha}-1)
        (1+\eta)}
        \right)}e^{-\frac{ \log \eta}{(2k\eta-2-\alpha)(1+\frac{1}{\eta})}p} \\& = e^{-\frac{p}{2k}
        \left(\frac{1}{2}+\frac{ \log \eta}
        {
        1+\eta}
        \right)} \leq Ce^{-\frac{1}{2k}
        \left(\frac{1}{2}+\frac{
        \log(\eta_{\infty,k}-\epsilon)}
        {\eta_{\infty,k}-\epsilon+1}\right) p} 
\end{align*}
and this proves \eqref{x.8}.

Similarly, by \eqref{x.4}, we have
\begin{align*}
\frac{\delta}{\rho^{\frac{4k\eta}{2+\alpha}}}
\leq Ce^{-\frac{p}{2+\alpha}\left(
\frac{1}{2}+\frac{2k\eta^2}
{(2k\eta-2-\alpha)(\eta+1)}\log\eta\right)}
\leq Ce^{-\frac{p}{2+\alpha}\left(
\frac{1}{2}+\frac{2k(\eta_{\infty,k}-\epsilon)^2}
{(2k\eta_{\infty,k}-\epsilon-2-\alpha)
(\eta_{\infty,k}-\epsilon+1)}\log(\eta_{\infty,k}-\epsilon)\right)}.
\end{align*}To prove \eqref{x.10} and \eqref{x.11}, we only have to consider the case \(\theta = \theta_0\).
Thanks to Lemma \ref{lem2.1}, we can obtain
\begin{align*}
\delta^{\theta_0} \geq \frac{1}{C} e^{-\frac{p}{2(2+\alpha)} \left( 1 - \frac{4k\eta^2 \log \eta}{(2k\eta-2-\alpha)(\eta+1)} \right) \theta_0} \geq \frac{1}{C} e^{-\frac{p}{2(2+\alpha)} \left( 1 - \frac{4k(\eta_{\infty,k}-\epsilon)^2 \log \eta_{\infty,k}-\epsilon}{(2k\eta_{\infty,k}-\epsilon-2-\alpha)
(\eta_{\infty,k}-\epsilon+1)} \right) \theta_0}.
\end{align*}
Therefore,  we choose \(\theta_0\) so small that
\[
\frac{\theta_0}{2(2+\alpha)} \left( 1 - \frac{4k(\eta_{\infty,k}-\epsilon)^2 \log \eta_{\infty,k}-\epsilon}{(2k\eta_{\infty,k}-\epsilon-2-\alpha)
(\eta_{\infty,k}-\epsilon+1)} \right)  \leq \frac{\varepsilon}{4}.
\]
Estimate \eqref{x.10} then follows.  The proof of \eqref{x.11} is very similar and we omit it.
\end{proof}

To estimate the error term, we first establish some key estimates.
For \( x, y \in \Omega \), we denote as \( G(x, y) \) the Green's function for \( -\Delta \) on \( \Omega \) and as \( H(x, y) \) its regular part, that is:
\[
\begin{cases}
-\Delta_x G(x, y) = \delta_y (x), & x \in \Omega, \\[4pt]
G(x, y) = 0, & x \in \partial \Omega,
\end{cases}
\qquad
H(x, y) := G(x, y) + \frac{1}{2\pi} \log |x - y|,
\]
where \( H \) solves
\begin{align}\label{2.016}
\begin{cases}
-\Delta_x H(x, y) = 0, & x \in \Omega \\[4pt]
H(x, y) = \frac{1}{2\pi} \log |x - y|, & x \in \partial \Omega.
\end{cases}
\end{align}

\begin{lemma}\label{lem2.2}
Let $C_1$, $C_2$, $C_3$ and $C_4$ be the constants defined in \eqref{C..i1}. The following estimates  hold uniformly in \( x \in \Omega \):
\begin{align}\label{p.1}
PU_1(x) &= U_{\alpha}\left( \frac{x}{\delta} \right) + 4(2+\alpha)\pi H(0,0) - \log 2(2+\alpha)^{2} - 2(2+\alpha)\log \delta + O\left( |x| + \delta^{2+\alpha} \right),  \\
PV_1(x) &= V_{\alpha}\left( \frac{x}{\delta} \right) - 2C_1\pi H(0,0) + C_1\log \delta + O\left( |x| + \delta \right), \notag\\
PW_1(x) &= W_{\alpha}\left( \frac{x}{\delta} \right) - 2C_2\pi H(0,0) + C_2\log \delta + O\left( |x| + \delta\right),\notag\end{align}
\begin{align}\label{p.2}
PU_2(x) &= U_{0}\left( \frac{x^k - \rho^k}{\beta^k} \right) + 8k\pi H(0,0) - \log8 - 4k\log \beta + O\left( |x^k - \rho^k| + \beta^{2k} + \rho^k \right),\\
PV_2(x) &= V_{0}\left( \frac{x^k - \rho^k}{\beta^k} \right) - 2kC_3\pi H(0,0) + kC_3\log \beta + O\left( |x^k - \rho^k| + \beta^k + \rho^k \right), \notag\\
PW_2(x) &= W_{0}\left( \frac{x^k - \rho^k}{\beta^k} \right) - 2kC_4\pi H(0,0) + kC_4\log \beta + O\left( |x^k - \rho^k| + \beta^k + \rho^k \right),\notag
\end{align}
where \( U_{0}, U_1,U_2, U_{\alpha}, V_{0}, V_1,V_2,V_{\alpha}, W_{0},W_1,W_2,W_{\alpha}\) are the functions defined in \eqref{001.040}, \eqref{1.040}, \eqref{1.40-bis}, \eqref{1..40}, \eqref{1.41}, \eqref{1..41} and \eqref{gy11}.

Moreover, if \( |y| \leq \frac{1}{2} \frac{\rho}{\delta} \), we have
\begin{align}\label{pp.3}
PU_2(\delta y) &= -4k\log \rho + 8k\pi H(0,0) + O\left( \frac{\delta^k}{\rho^k} |y|^k + \frac{\beta^{2k}}{\rho^{2k}} + \rho^k \right),  \\
PV_2(\delta y) &= kC_3\log \rho - 2kC_3\pi H(0,0) + \left( \frac{\delta^k}{\rho^k} |y|^k + \frac{\beta^k}{\rho^k} + \rho^k \right),\notag \\
PW_2(\delta y) &= kC_4\log \rho - 2kC_4\pi H(0,0) + O\left( \frac{\delta^k}{\rho^k} |y|^k + \frac{\beta^k}{\rho^k} + \rho^k \right).\notag
\end{align}
If \( |y| \leq \frac{1}{2} \frac{\rho^k}{\beta^k} \), we have
\begin{align}\label{p.4}
PU_1\left( \sqrt[k]{\beta^k y + \rho^k} \right) &= -2(2+\alpha)\log \rho + 4(2+\alpha)\pi H(0,0) + O\left( \frac{\beta^k}{\rho^k} |y| + \frac{\delta^{2+\alpha}}{\rho^{2+\alpha}} + \rho \right),  \\
PV_1\left( \sqrt[k]{\beta^k y + \rho^k} \right) &= C_1\log \rho - 2C_1\pi H(0,0) + O\left( \frac{\beta^k}{\rho^k} |y| + \frac{\delta}{\rho} + \rho \right),\notag\\
PW_1\left( \sqrt[k]{\beta^k y + \rho^k} \right) &= C_2\log \rho - 2C_2\pi H(0,0) + O\left( \frac{\beta^k}{\rho^k} |y| + \frac{\delta}{\rho} + \rho \right)\notag.
\end{align}
\end{lemma}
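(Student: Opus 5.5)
The plan is to write every projection as the original function minus a harmonic correction, and to compute that correction from the behaviour of the original function on $\partial\Omega$. For $\varphi\in\{U_1,V_1,W_1,U_2,V_2,W_2\}$ we set $\psi:=\varphi-P\varphi$. Then $\psi$ is harmonic in $\Omega$ and equals $\varphi$ on $\partial\Omega$. By the maximum principle it suffices to find an explicit harmonic function $h$ with $\sup_{\partial\Omega}|\varphi-h|$ small, since then $\psi=h+O(\sup_{\partial\Omega}|\varphi-h|)$ uniformly in $\Omega$.

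\emph{Step 1 (the bubbles at the origin).} On $\partial\Omega$ we have $|x|\ge c>0$ and $\delta\to0$. Hence
\[U_1(x)=\log\big(2(2+\alpha)^2\big)+(2+\alpha)\log\delta-2(2+\alpha)\log|x|+O(\delta^{2+\alpha}).\]
Since $-\log|x|=2\pi H(x,0)$ on $\partial\Omega$, the choice $h=\log 2(2+\alpha)^2+(2+\alpha)\log\delta-4(2+\alpha)\pi H(x,0)$ gives $PU_1=U_1-h+O(\delta^{2+\alpha})$. We then rewrite $U_1=U_\alpha(x/\delta)-(2+\alpha)\log\delta$ and use $H(x,0)=H(0,0)+O(|x|)$, which is valid because $H$ is smooth in $\Omega$. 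This yields \eqref{p.1}.

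For $V_1$ and $W_1$ we use \eqref{1.40-bis} and \eqref{1.41}. On $\partial\Omega$ they give $V_1(x)=C_1\log|x|-C_1\log\delta+O(\delta)$, and similarly for $W_1$ with $C_2$. The same argument then gives the stated expansions, with error $O(|x|+\delta)$.

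\emph{Step 2 (the bubbles at the $k$ peaks).} For $x\in\partial\Omega$ we have $|x^k-\rho^k|=|x|^k\big(1+O(\rho^k)\big)$. Therefore
\[U_2(x)=\log(8k^2)+2k\log\beta-4k\log|x|+O(\rho^k+\beta^{2k}).\]
We take $h=\log 8k^2+2k\log\beta-8k\pi H(x,0)$ and then write $U_2=U_0\big((x^k-\rho^k)/\beta^k\big)+\log k^2-2k\log\beta$. The remaining task is to replace $H(x,0)$ by $H(0,0)$. A direct Taylor bound only gives an error $O(|x|)$, not $O(|x^k-\rho^k|)$. To do better we use the symmetries \eqref{1.1} and \eqref{1.2}: $H(\cdot,0)$ is invariant under rotation by $2\pi/k$, so its harmonic expansion around $0$ contains only modes $r^{jk}\cos(jk\theta)$. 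Hence $H(x,0)-H(0,0)=O(|x|^k)$, and $|x|^k\le|x^k-\rho^k|+\rho^k$, which gives \eqref{p.2}. The expansions of $V_2$ and $W_2$ follow in the same way from \eqref{1..40} and \eqref{1..41}, with constants $kC_3$ and $kC_4$ coming from $\log|x^k|=k\log|x|$.

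\emph{Step 3 (evaluation near the other peaks).} For \eqref{pp.3} we insert $x=\delta y$ with $|y|\le\rho/(2\delta)$ into \eqref{p.2}. There $|x^k|\le 2^{-k}\rho^k$, so $|x^k-\rho^k|\ge c\rho^k$. We expand
\[U_0\Big(\frac{x^k-\rho^k}{\beta^k}\Big)=\log 8-4\log\frac{|x^k-\rho^k|}{\beta^k}+O\Big(\frac{\beta^{2k}}{\rho^{2k}}\Big),\qquad \log|x^k-\rho^k|=k\log\rho+O\Big(\frac{\delta^k|y|^k}{\rho^k}\Big).\]
This produces $-4k\log\rho+8k\pi H(0,0)$ together with the stated error; the $\log\beta$ terms cancel exactly. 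The expansions of $V_0$ and $W_0$ at infinity are handled the same way.

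For \eqref{p.4} we set $x=\sqrt[k]{\beta^ky+\rho^k}$, which gives $|x|=\rho\big(1+O(\beta^k|y|/\rho^k)\big)$. We insert this into \eqref{p.1}, using the large-argument expansions of $U_\alpha$, $V_\alpha$ and $W_\alpha$ with $|x|/\delta\ge c\rho/\delta\to\infty$ (by Lemma \ref{lem2.x}). The $O(|x|)$ term becomes $O(\rho)$.

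I expect the main obstacle to be the sharp error $O(|x^k-\rho^k|+\rho^k)$ in Step 2. It requires the $k$-fold symmetry of $H(\cdot,0)$, not just its smoothness. The other steps are routine bookkeeping in which the cancellation of the $\log\beta$ and $\log\delta$ terms has to be checked carefully.
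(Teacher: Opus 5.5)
Your argument is correct and uses the same mechanism as the paper. $P\varphi-\varphi$ is the harmonic function with boundary values $-\varphi$; it is identified through the regular part $H$ and controlled by the maximum principle. The paper quotes \cite[Lemma A.3]{PT} and \cite[Lemma 3.4]{LIA} for $PU_1$ and $PU_2$, and treats $PV_i$, $PW_i$ this way by hand.

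The one place where you proceed differently is the sharp error $O(|x^k-\rho^k|+\rho^k)$ in \eqref{p.2}.
\begin{itemize}
\item The paper writes the correction for $PV_2$ as $-2\pi C_3H(x^k,\rho^k)$. That is, it works in the variable $w=x^k$, implicitly with the Green function of $\Omega^k$ as in \cite{LIA}. It then Taylor-expands at $(0,0)$ and uses $H_{\Omega^k}(0,0)=kH(0,0)$ without comment.
\item You stay on $\Omega$. You absorb the $\rho$-dependence into the boundary data, where $\log|x^k-\rho^k|=k\log|x|+O(\rho^k)$. You then use that $H(\cdot,0)$ inherits the $2\pi/k$-invariance \eqref{1.2}, so its expansion at $0$ has no modes below $r^k$. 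Hence $H(x,0)-H(0,0)=O(|x|^k)$ on all of $\Omega$, the region away from $0$ being trivial since $H(\cdot,0)$ is bounded on $\overline{\Omega}$.
\end{itemize}
The two routes are equivalent, since $H_{\Omega^k}(x^k,0)=kH(x,0)$. Yours is self-contained, avoids introducing $\Omega^k$, and makes explicit where the $k$-symmetry of $\Omega$ enters. The paper's one-line passage from $H(x^k,\rho^k)$ to $kH(0,0)$ leaves this implicit.

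One small slip: by \eqref{2.016}, on $\partial\Omega$ one has $\log|x|=2\pi H(x,0)$, not $-\log|x|=2\pi H(x,0)$. Your function $h$ already has the correct sign, so nothing downstream changes.
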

\begin{proof}The first estimate in \eqref{p.1} has been proved in Lemma A.3 in \cite{PT}  and the first equation of \eqref{p.2} follows by Lemma 3.4 in \cite{LIA}. So we just need to prove the remaining estimates.

From the definition of $V_1$ in
\eqref{gy11} and the asymptotic behavior of $V_\alpha$ in \eqref{1.40-bis},
we can derive
$$
PV_1(x) - V_1(x) = -V_1(x) = -V_\alpha\left(\frac{x}{\delta}\right) = -C_1 \log |x| + C_1 \log \delta + O(\delta), \quad x \in \partial\Omega,
$$
which, together with, the fact that \( H(x, y) \) is the solution to \eqref{2.016} and the maximum principle, gives
\begin{align*}
PV_1(x) &= V_1(x) - 2\pi C_1H(x, 0) + C_1 \log \delta + O(\delta)
\\&
= V_\alpha \left( \frac{x}{\delta} \right) - 2\pi C_1H(0, 0) + C_1 \log \delta + O(\delta + |x|).
\end{align*}
Since $x=\sqrt[k]{\beta^k y + \rho^k}=\rho\left(1+O(\frac{\beta^k}{\rho^k}
|y|)\right )$ and the asymptotic behavior of $V_\alpha$ in \eqref{1.40-bis}, we can obtain \begin{align*}
PV_1\left( \sqrt[k]{\beta^k y + \rho^k} \right) &= C_1\log \rho - 2C_1\pi H(0,0) + O\left( \frac{\beta^k}{\rho^k} |y| + \frac{\delta}{\rho} + \rho \right), \ \ \ \text{if}\ \ |y| \leq \frac{1}{2} \frac{\rho^k}{\beta^k}.
\end{align*}
Similarly, by utilizing \eqref{1.41}, \eqref{gy11}, \eqref{2.016} and the maximum principle,  we can get
\begin{align*}
PW_1(x) = W_\alpha \left( \frac{x}{\delta} \right) - 2C_2\pi H(0, 0) + C_2 \log \delta + O(|x| + \delta)
\end{align*}
and
\begin{align*}
PW_1\left( \sqrt[k]{\beta^k y + \rho^k} \right) &= C_2\log \rho - 2C_2\pi H(0,0) + O\left( \frac{\beta^k}{\rho^k} |y| + \frac{\delta}{\rho} + \rho \right), \ \ \ \text{if}\ \ |y| \leq \frac{1}{2} \frac{\rho^k}{\beta^k}.
\end{align*}
It follows from the definition of $V_2$ in
\eqref{gy11} and the asymptotic behavior of $V_0$ in \eqref{1..40} that
$$
PV_2(x) - V_2(x) = -V_2(x) = -V_0 \left( \frac{x^k - \rho^k}{\beta^k} \right) = -C_3 \log|x^k - \rho^k| + C_3 \log\beta^k + O(\beta^k), \quad x \in \partial\Omega.
$$Thanks to the fact that \( H(x, y) \) is the solution to \eqref{2.016}  and the maximum principle, we can obtain
\begin{align*}
PV_2(x) &= V_2(x) - 2\pi C_3H(x^k, \rho^k) + C_3 \log\beta^k + O(\beta^k)
\\&
= V_0 \left( \frac{x^k - \rho^k}{\beta^k} \right) - 2k\pi C_3H(0, 0) + C_3 \log\beta^k + O\left(\beta^k + |x^k - \rho^k| + \rho^k\right).
\end{align*}Since the asymptotic behavior of $V_0$ in \eqref{1..40} and $|y| \leq \frac{1}{2} \frac{\rho}{\delta}$, we can have
\begin{align*}
PV_2(\delta y) &= C_3 \log|\delta^k y^k - \rho^k|
- 2k\pi C_3H(0, 0) + O\left(\beta^k + \rho^k+ \frac{\beta^k}{\rho^k}\right)\\&
=C_3k\log \rho - 2k\pi C_3 H(0,0) + \left( \frac{\delta^k}{\rho^k} |y|^k + \frac{\beta^k}{\rho^k} + \rho^k \right).
\end{align*}
From \eqref{1..41}, \eqref{gy11}, \eqref{2.016} and the maximum principle, we can have
\begin{align*}
PW_2(x) = W_0 \left( \frac{x^k - \rho^k}{\beta^k} \right) - 2kC_4\pi H(0, 0) + kC_4 \log\beta^k + O\left(|x^k - \rho^k| + \beta^k + \rho^k\right)\end{align*}
and
 \begin{align*}
 PW_2(\delta y) &= kC_4\log \rho - 2kC_4\pi H(0,0) + O\left( \frac{\delta^k}{\rho^k} |y|^k + \frac{\beta^k}{\rho^k} + \rho^k \right), \ \ \ \text{if}\ \ |y| \leq \frac{1}{2} \frac{\rho}{\delta}.
 \end{align*}
\end{proof}
\subsection*{ The estimate of error close to the positive peak.}
\begin{lemma}\label{lem2.3}
Let $\mathcal{E}$ be as defined in \eqref{1.8}. There exist $\theta\in(0,1)$ and $C>0$ such that
\begin{align*}
|y|\leq\frac{1}{\delta^{\theta}}\qquad\Rightarrow\qquad
|\mathcal{E}( \delta y)|\leq\frac{C|y|^{\alpha}}{\delta^{2}p^{4}}\,
\frac{\log^{6}(|y|+2)}{(|y|^{2+\alpha}+1 )^{2}}
\end{align*}
for $p$ large enough.
\end{lemma}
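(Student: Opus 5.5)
We work in the blown-up variable $x=\delta y$ with $|y|\le \delta^{-\theta}$, and write $\Upsilon(\delta y)$ as a perturbation of $\tau\,(\text{constant} + \text{profile}/p)$ so that Lemma \ref{pe.0} applies. By Lemma \ref{lem2.2}, specifically \eqref{p.1} for the terms $\Upsilon_1$ and \eqref{pp.3} for the terms $\Upsilon_2$ (admissible since $|y|\le\delta^{-\theta}\le \frac12\frac{\rho}{\delta}$ by \eqref{x.10}), we get
\begin{align*}
\Upsilon(\delta y)=\tau\Big[&-2(2+\alpha)\log\delta+4(2+\alpha)\pi H(0,0)-\log 2(2+\alpha)^2+4k\eta\log\rho-8k\eta\pi H(0,0)\\
&+\tfrac1p\big(C_1\log\delta-2C_1\pi H(0,0)-\eta kC_3\log\rho+2\eta kC_3\pi H(0,0)\big)+\tfrac1{p^2}(\cdots)\\
&+U_\alpha(y)+\tfrac{V_\alpha(y)}{p}+\tfrac{W_\alpha(y)}{p^2}+R(y)\Big],
\end{align*}
where the remainder is $R(y)=O(\delta|y|+\delta^{k}\rho^{-k}|y|^k+\beta^k\rho^{-k}+\rho^k)$. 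The constant part is exactly the left-hand side of the third relation in \eqref{1.11} (with $\log\rho^{k\eta}$ counted with weight $4$), up to $O(1/p^3)$ terms. The choice of parameters in \eqref{1.11} is made precisely so that this constant equals $p$. Hence
\[
\Upsilon(\delta y)=\tau p\Big(1+\frac{U_\alpha(y)}{p}+\frac{V_\alpha(y)}{p^2}+\frac{W_\alpha(y)}{p^3}+\frac{O(1)}{p^3}+R(y)\Big).
\]
By Lemma \ref{lem2.x} and the restriction $|y|\le\delta^{-\theta}$, with $\theta$ small, the remainder $R$ is $O(e^{-cp})$. It is therefore absorbed into the $O(1/p^3)$ term, since $|y|^k\delta^{k}\rho^{-k}\le (\delta^{1-\theta}/\rho)^k$.

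Next we apply Lemma \ref{pe.0} with $a=U_\alpha(y)$, $b=V_\alpha(y)$, $c=W_\alpha(y)+O(1)$. Its hypotheses hold: $a\le \log 2(2+\alpha)^2$, and $a\ge -C\log(2+|y|)\ge -2(2+\alpha)\theta|\log\delta|\ge(-1+\frac1C)p$ for $\theta$ small, because $|\log\delta|\sim b_kp$; also $|b|+|c|=O(\log(2+|y|))$. In particular $\Upsilon(\delta y)>0$ on this region. Using $\tau/\delta^{2+\alpha}=(p\tau)^p$ from \eqref{1.11}, we obtain
\[
|\delta y|^\alpha\Upsilon^p(\delta y)=\frac{\tau}{\delta^{2}}|y|^\alpha e^{U_\alpha}\Big(1+\frac{V_\alpha-\frac{U_\alpha^2}{2}}{p}+\frac{f_2\text{-type terms}}{p^2}+O\Big(\frac{\log^6(|y|+2)}{p^3}\Big)\Big).
\]
On the other side, by the equations for $U_1,V_1,W_1$ and for $U_2,V_2,W_2$,
\[
\Delta\Upsilon(\delta y)=-\frac{\tau}{\delta^2}|y|^\alpha e^{U_\alpha}\Big(1-\frac{V_\alpha-f_1(U_\alpha)}{p}-\frac{W_\alpha-f_2(U_\alpha,V_\alpha)}{p^2}\Big)+\tau\eta\,(\text{negative-bubble terms at }\delta y).
\]
The choice of $f_1$ and $f_2$ is designed so that the $1/p$ and $1/p^2$ coefficients cancel identically; one checks that $c-ab+\frac{a^3}{3}+\frac12(b-\frac{a^2}{2})^2$ matches $W_\alpha-f_2$ after accounting for the $V_\alpha$ and $U_\alpha$ shifts. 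The leftover is $\frac{\tau}{\delta^2}|y|^\alpha e^{U_\alpha}O(\log^6(|y|+2)/p^3)$. Since $\tau\sim A_k/p$ and $e^{U_\alpha}\lesssim (1+|y|^{2+\alpha})^{-2}$, this gives the claimed bound $\frac{C|y|^\alpha}{\delta^2p^4}\frac{\log^6(|y|+2)}{(|y|^{2+\alpha}+1)^2}$.

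It remains to bound the contribution of $\Delta\Upsilon_2$ at $\delta y$. It equals $\tau\eta\,\delta^{2k-2}|y|^{2k-2}e^{U_2(\delta y)}(1+O(\log/p))$, where $e^{U_2(\delta y)}\approx 8k^2\beta^{2k}/\rho^{4k}$. This yields a size of order $\frac1p|y|^{2k-2}\delta^{2k-2}\beta^{2k}\rho^{-4k}$, which must be dominated by $\frac{|y|^\alpha}{\delta^2p^4}(1+|y|)^{-4-2\alpha}\log^6$ for $|y|\le\delta^{-\theta}$. This is the step I expect to be the main obstacle, because it requires a quantitative comparison of the exponential rates: at the worst point $|y|=\delta^{-\theta}$ we need $\delta^{(1-\theta)(2k+2+\alpha)}\,\delta^{-2\theta(2+\alpha)}\beta^{2k}\rho^{-4k}\ll e^{-Cp}$. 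I would verify this using \eqref{x.9}, which gives $\delta\ll\rho^{4k\eta/(2+\alpha)}$, together with the explicit exponents of Lemma \ref{lem2.1}, choosing $\theta$ small so that the positive margin from \eqref{x.4} survives. The same exponential smallness controls the error in Lemma \ref{pe.0} coming from the $R$-terms.
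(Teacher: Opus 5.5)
Your argument follows the paper's proof step by step: expand $\Upsilon(\delta y)$ via \eqref{p.1} and \eqref{pp.3}, use the third relation in \eqref{1.11} to turn the constant into $p$, apply Lemma~\ref{pe.0}, cancel against $\Delta\Upsilon_1$, and show $\Delta\Upsilon_2(\delta y)$ is negligible. However, some of the bookkeeping, as written, would not give the order $p^{-4}$.

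\emph{The constant term.} It is not $p$ ``up to $O(1/p^3)$''. The coefficients of $\log\delta$, $\log\rho$ and $H(0,0)$ collected from $PU_i$, $PV_i/p$, $PW_i/p^2$ reproduce the left-hand side of the third equation in \eqref{1.11} exactly. Hence $\Upsilon(\delta y)=\tau\big(p+U_\alpha+\frac{V_\alpha}{p}+\frac{W_\alpha}{p^2}+O(e^{-\varepsilon p})\big)$, and Lemma~\ref{pe.0} is applied with $c=W_\alpha+O(p^2e^{-\varepsilon p})$. This exactness is needed. With $c=W_\alpha+O(1)$, as you wrote, the $p^{-2}$ coefficient keeps an uncancelled $O(1)$, and you lose a factor of $p$ (only $\frac{|y|^\alpha}{\delta^2p^3}$-type bounds).

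\emph{The sign and the $p^{-2}$ match.} The correct formula is $\Delta\Upsilon_1(\delta y)=-\frac{\tau}{\delta^2}|y|^\alpha e^{U_\alpha}\big(1+\frac{V_\alpha-f_1(U_\alpha)}{p}+\frac{W_\alpha-f_2(U_\alpha,V_\alpha)}{p^2}\big)$, with plus signs, not minus. The $p^{-2}$ match involves no ``shifts''. The identity $W-UV+\frac{U^3}{3}+\frac12\big(V-\frac{U^2}{2}\big)^2=W-f_2(U,V)$ holds exactly when $f_2(u,v)=uv-\frac{u^3}{3}-\frac12\big(v-\frac{u^2}{2}\big)^2$. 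This is what the paper's computation actually uses; with the $\frac{u^3}{2}$ printed in Section 2, a residual $-\frac{U_\alpha^3}{6}$ would remain.

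\emph{The $\Delta\Upsilon_2$ term.} This step is easier than you expect, but your exponents are off. At $x=\delta y$ the argument of $U_0$ has modulus $\sim(\rho/\beta)^k$, so $U_0$ is of order $p$. The bracket $1+\frac{V_0-f_1}{p}+\frac{W_0-f_2}{p^2}$ is therefore polynomially large in $p$, not $1+O(\log/p)$; this is harmless. Compare with the right-hand side of the lemma:
\begin{itemize}
\item for $|y|\le1$ the ratio is at most $Cp^{C}|y|^{2k-2-\alpha}\delta^{2k}\beta^{2k}\rho^{-4k}$;
\item for $1\le|y|\le\delta^{-\theta}$ it is at most $Cp^{C}|y|^{2k+2+\alpha}\delta^{2k}\beta^{2k}\rho^{-4k}$.
\end{itemize}
Your exponent $(1-\theta)(2k+2+\alpha)-2\theta(2+\alpha)$ on $\delta$ exceeds the correct $2k-\theta(2k+2+\alpha)$, so the condition you planned to check is too weak. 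Since $2k-2>\alpha$ by \eqref{k.1}, both cases are bounded by
\[
Cp^{C}\Big(\frac{\delta^{1-\theta'}}{\rho}\Big)^{2k}\Big(\frac{\beta}{\rho}\Big)^{2k},\qquad \theta'=\theta\,\frac{2k+2+\alpha}{2k}.
\]
This is exponentially small by \eqref{x.10} and \eqref{x.8} once $\theta'\le\theta_0$; neither \eqref{x.4} nor \eqref{x.9} is needed.

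Keeping the factor $|y|^{2k-2}$ is essential near $y=0$, where the claimed bound vanishes like $|y|^\alpha$. A uniform bound $O(\tau e^{-\varepsilon p})$, which is how the paper states it, could not be absorbed there. On this point your pointwise comparison is the right one.
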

\begin{proof} Noting that \(|\delta y|\leq\delta^{1-\theta}\), then from \eqref{1.6}, \eqref{p.1} and \eqref{pp.3}, we can deduce
\begin{align*}
\Upsilon(\delta y) &= \tau\Bigl(\operatorname{P}U_{1}(\delta y)
+\frac{\operatorname{P}V_{1}(\delta y)}{p}
+\frac{\operatorname{P}W_{1}(\delta y)}{p^{2}}\Bigr) \\
&\quad - \tau\eta\Bigl(\operatorname{P}U_{2}(\delta y)
+\frac{\operatorname{P}V_{2}(\delta y)}{p}
+\frac{\operatorname{P}W_{2}(\delta y)}{p^{2}}\Bigr) \\
&= \tau\Bigl(U_{\alpha}(y)+\frac{V_{\alpha}(y)
}{p}+\frac{W_{\alpha}(y)}{p^{2}}
+2\pi\left(2(2+\alpha) - \frac{C_1}{p} - \frac{C_2}{p^2}\right)H(0,0)
-\log 2(2+\alpha)^{2} \\
&\qquad -\left(2(2+\alpha) - \frac{C_1}{p} - \frac{C_2}{p^2}\right)\log \delta
 +O\bigl(\delta^{1-\theta}\bigr)\Bigr) \\
  &\quad - \tau\eta\Bigl(\Bigl(-4+\frac{C_{3}}{p}+\frac{C_{4}}{p^{2}}\Bigr)k\log\rho
      +2k\pi\Bigl(4-\frac{C_{3}}{p}-\frac{C_{4}}{p^{2}}\Bigr)H(0,0) \\
        &\qquad
  +O\Bigl(\frac{\delta^{k(1-\theta)}}{\rho^{k}}
   +\frac{1}{p}\frac{\beta^{k}}{\rho^{k}}
   +\frac{\beta^{2k}}{\rho^{2k}}
   +\rho^{k}\Bigr)\Bigr).
\end{align*}
 Lemma \ref{lem2.1} and \eqref{1.11} reveal that these remainders decay exponentially fast with respect to $p$. If $\theta$ is small enough, then we have
\begin{align}\label{2.18}
\Upsilon(\delta y)=\tau p\Bigl(1+\frac{U_{\alpha}(y)}{p}
+\frac{V_{\alpha}(y)}{p^{2}}+\frac{W_{\alpha}(y)}{p^{3}}+O\bigl(e^{-\varepsilon p}\bigr)\Bigr).
\end{align}
It follows from \eqref{pe.1}, the explicit expression \eqref{1.040} for \(U_{\alpha}\) and the asymptotic behaviors \eqref{1.40-bis} and \eqref{1.41} for $V_{\alpha}(y)$ and $W_{\alpha}(y)$ that
\begin{align}\label{2.19}
&\Bigl(1+\frac{U_{\alpha}(y)}{p}+\frac{V_{\alpha}
(y)}{p^{2}}+\frac{W_{\alpha}(y)}{p^{3}}
\Bigr)^{p}\notag
\\&= e^{U_{\alpha}(y)}\Bigl(1+\frac{1}{p}\Bigl(
V_{\alpha}(y)-\frac{U_{\alpha}(y)^{2}}{2}\Bigr) +\frac{1}{p^{2}}\Bigl(W_{\alpha}(y)-U_{\alpha}
(y)V_{\alpha}(y)+\frac{U_{\alpha}(y)^{3}}{3}
           +\frac{1}{2}\Bigl(V_{\alpha}
(y)-\frac{U_{\alpha}(y)^{2}}{2}\Bigr)^{2}\Bigr)\Bigr) \notag\\&\qquad + O\Bigl(\frac{1}{(|y|^{2+\alpha}+1)^{2}}\,
            \frac{\log^{6}(|y|^{2+\alpha}+2)}{p^{3}}\Bigr).
\end{align}
With the aid of \eqref{1.6}, \eqref{1.11}, \eqref{2.18} and \eqref{2.19}, we can get
\begin{align*}
&\Delta\Upsilon_{1}(\delta y)+|\delta y|^{\alpha}|\Upsilon(\delta y)|^{p-1}\Upsilon(\delta y) \\
&= \frac{\tau}{\delta^{2}}\Bigl(\Delta U_{\alpha}(y)
   +\frac{1}{p}\Delta V_{\alpha}(y)+\frac{1}{p^{2}}\Delta W_{\alpha}(y)\Bigr) \\
&\quad + |\delta y|^{\alpha}(\tau p)^{p}\Bigl(1+\frac{U_{\alpha}(y)}{p}
   +\frac{V_{\alpha}(y)}{p^{2}}+
\frac{W_{\alpha}(y)}{p^{3}}+O\bigl(e^{-\varepsilon p}\bigr)\Bigr)^{p} \\
&= \frac{\tau}{\delta^{2}}| y|^{\alpha}\Biggr\{- e^{U_{\alpha}(y)}\Bigl(1
   +\frac{1}{p}\Bigl(V_{\alpha}(y)-
\frac{U_{\alpha}(y)^{2}}{2}\Bigr)\Bigr.
\\
&\quad+\frac{1}{p^{2}}\Bigl(W_{\alpha}(y)
-U_{\alpha}(y)V_{\alpha}(y)+\frac{U_{\alpha}(y)^{3}}{3}
+\frac{1}{2}\bigl(V_{\alpha}(y)-\frac{U_{\alpha}(y)^{2}}{2}
\bigr)^{2}\Bigr)\Bigl) \\
&\quad+\Bigl(1+\frac{U_{\alpha}(y)}{p}
+\frac{V_{\alpha}(y)}{p^{2}}+\frac{W(y)}{p^{3}}
+O\Bigl(\frac{e^{-\varepsilon p}}{p}\Bigr)\Bigr)^{p}\Biggr\} \\
&= O\Bigl(\frac{\tau| y|^{\alpha}}{\delta^{2}p^{3}}
\frac{\log^{6}(|y|^{2+\alpha}+2)}{(|y|^{2+\alpha}+1)^{2}}\Bigr).
\end{align*}
Since $PU_{2}$, $PV_{2}$, $PW_{2}$ satisfy \eqref{01.0040}, \eqref{VY2} and  \eqref{WY2}, $U_{0}$, $V_{0}$, $W_{0}$ satisfy \eqref{001.040}, \eqref{1..40} and \eqref{1..41} and the parameters $\delta$, $\beta$ and $\rho$ satisfy Lemma \ref{lem2.x}, we can deduce
\begin{align*}
\Delta\Upsilon_{2}(\delta y) &=
O\Bigl(\frac{\tau\delta^{2k-2}|y|^{2k-2}
}
{\beta^{2k}}
e^{U}\bigl(1+U_{0}^{4}+V_{0}^{2}+|W_{0}|\bigr)
\Bigl(\frac{\delta^{k}y^{k}-\rho^{k}}{\beta^{k}}\Bigr)\Bigr) \\
&= O\Bigl(\tau\frac{\delta^{(1-\theta)(2k-2)}}{\beta^{2k}}
\frac{\log^{4}\frac{\rho}{\beta}}{\bigl(\frac{\rho}{\beta}\bigr)^{4k}}\Bigr) \\
&= O\Bigl(\tau
\frac{\rho^{\frac{4k\eta}{2+\alpha}(1-\theta)(2k-2)}}
{\rho^{2k}}\frac{\beta^{2k}}{\rho^{2k}}
\log^{4}\frac{\rho}{\beta}\Bigr) \\
&= O\bigl(\tau e^{-\varepsilon p}\bigr),
\end{align*}
where $\theta$ is a small enough constant. Then, we can conclude
\begin{align*}
\mathcal{E}(\delta y) &= \Delta\Upsilon_{1}(\delta y)
+ |\delta y|^{\alpha}|\Upsilon(\delta y)|^{p-1}\Upsilon(\delta y) + \Delta\Upsilon_{2}(\delta y) \\
&= O\Bigl(\frac{|y|^{\alpha}}{\delta^{2}p^{4}}
\frac{\log^{6}(|y|^{2+\alpha}+2)}{(|y|^{2+\alpha}+1)^{2}}\Bigr).
\end{align*}
\end{proof}
\subsection*{ The estimate of error close to the negative peaks.}
\begin{lemma}\label{lem2.4}
There exist $\theta\in(0,1)$ and $C>0$ such that
\[
|y|\leq\frac{1}{\beta^{k\theta}}\qquad
\Longrightarrow\qquad
\Bigl|\mathcal{E}\left(\sqrt[k]{\beta^{k}y+\rho^{k}}\right)\Bigr|
\le C\frac{\rho^{2k-2}\log^{6}(|y|+2)}
            {\beta^{2k}p^{4}\,
            (|y|^{2}+1)^{2}},
\]
for $p$ large enough, where $\mathcal{E}$ is defined in \eqref{1.8}.
\end{lemma}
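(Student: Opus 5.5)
We follow the proof of Lemma \ref{lem2.3}, now in the variable $x=\sqrt[k]{\beta^k y+\rho^k}$ adapted to the negative peak near $\rho$. By the $k$-symmetry of $\Upsilon$ it is enough to work near this one peak. We restrict to $|y|\le \beta^{-k\theta}$ with $\theta$ small. Then $\beta^k|y|\le \beta^{k(1-\theta)}$, and \eqref{x.11} gives $\beta^{k(1-\theta)}\ll\rho^k$, so $x=\rho\bigl(1+O(\beta^{k(1-\theta)}/\rho^k)\bigr)$ and $|x|^{2k-2}=\rho^{2k-2}(1+O(e^{-\varepsilon p}))$.

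\textbf{Step 1: expansion of $\Upsilon$ near the negative peak.} We insert \eqref{p.2} and \eqref{p.4} into \eqref{1.6}. This gives
\begin{align*}
\Upsilon(x)=-\tau\eta\Bigl(&U_0(y)+\tfrac{V_0(y)}{p}+\tfrac{W_0(y)}{p^2}+\bigl(4-\tfrac{C_3}{p}-\tfrac{C_4}{p^2}\bigr)(2k\pi H(0,0)-k\log\beta)-\log 8\\
&-\tfrac1\eta\bigl(2(2+\alpha)-\tfrac{C_1}{p}-\tfrac{C_2}{p^2}\bigr)(\log\rho-2\pi H(0,0))\Bigr)+\text{remainders}.
\end{align*}
By the fourth relation in \eqref{1.11}, the constant part equals exactly $p$. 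Hence
$$\Upsilon(x)=-\tau\eta p\Bigl(1+\tfrac{U_0(y)}{p}+\tfrac{V_0(y)}{p^2}+\tfrac{W_0(y)}{p^3}+O(e^{-\varepsilon p})\Bigr).$$
The remainders are $O(\beta^{k(1-\theta)}/\rho^k+\delta/\rho+\rho+\beta^k)$ coming from \eqref{p.2}, and $O(\delta^{2+\alpha}/\rho^{2+\alpha})$ coming from \eqref{p.4}. By Lemmas \ref{lem2.1} and \ref{lem2.x} they are all exponentially small in $p$.

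\textbf{Step 2: cancellation.} We apply Lemma \ref{pe.0} with $a=U_0$, $b=V_0$, $c=W_0$. Its hypotheses hold because $|U_0|,|V_0|,|W_0|\le C\log(|y|+2)\le Cp\theta$ in this range. This expands $(1+\dots)^p$ exactly as in \eqref{2.19}, with $e^{U_0}$ in place of $e^{U_\alpha}$ and an error $O\bigl(\log^6(|y|+2)(|y|^2+1)^{-2}p^{-3}\bigr)$.

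We then combine three facts:
\begin{itemize}
\item by the second relation in \eqref{1.11}, $(\tau\eta p)^p=\tau\eta k^2\rho^{2k-2-\alpha}\beta^{-2k}$, and multiplying by $|x|^\alpha$ this becomes $\tau\eta k^2\rho^{2k-2}\beta^{-2k}(1+O(e^{-\varepsilon p}))$;
\item $\Delta_x$ of a function of $y$ equals $k^2|x|^{2k-2}\beta^{-2k}\Delta_y$, the chain rule for the holomorphic map $x\mapsto x^k$;
\item the profile equations \eqref{01.0040}, \eqref{VY2}, \eqref{WY2}, with the specific $f_1,f_2$ chosen.
\end{itemize}
Together these give
$$\Delta\Upsilon_2+|x|^\alpha|\Upsilon|^{p-1}\Upsilon=O\Bigl(\tfrac{\tau\rho^{2k-2}}{\beta^{2k}p^{3}}\tfrac{\log^6(|y|+2)}{(|y|^2+1)^2}\Bigr).$$
Since $\tau\sim A_k/p$, this is the claimed bound.

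\textbf{Step 3: the contribution of $\Delta\Upsilon_1$.} This term must be shown negligible, and we expect it to be the main obstacle. We have $\Delta\Upsilon_1(x)=O\bigl(\tau|x|^\alpha\delta^{-2-\alpha}e^{U_\alpha(x/\delta)}(1+U_\alpha^4+V_\alpha^2+|W_\alpha|)\bigr)$. With $|x|\sim\rho$ this is $O(\tau\rho^{-4-\alpha}\delta^{2+\alpha}p^4)$.

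We need this to be at most $e^{-\varepsilon p}\rho^{2k-2}\beta^{-2k}(|y|^2+1)^{-2}$ uniformly for $|y|\le\beta^{-k\theta}$. The worst case is the largest $|y|$, where the right-hand side is about $\rho^{2k-2}\beta^{-2k}\beta^{4k\theta}$. So it suffices that
$$\delta^{2+\alpha}\beta^{2k(1-2\theta)}\rho^{-2k-2-\alpha}\le Ce^{-\varepsilon p}.$$
Using \eqref{x.7} and \eqref{x.11} (with $\beta^{1-2\theta}\le\rho\,e^{-\varepsilon p/2}$ for $2\theta\le\theta_0$), the left-hand side is at most $\rho^{2k}\cdot e^{-(2+\alpha)\varepsilon p}\rho^{2+\alpha}\cdot\rho^{-2k-2-\alpha}\cdot e^{-k\varepsilon p}$, which is exponentially small. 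This step determines how small $\theta$ has to be taken.
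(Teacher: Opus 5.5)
Your proposal is correct and follows the paper's proof: you expand $\Upsilon$ via \eqref{p.2}, \eqref{p.4} and the fourth relation in \eqref{1.11}, obtain the cancellation from Lemma \ref{pe.0} together with the profile equations, and bound $\Delta\Upsilon_1$ separately. Two small remarks: in your Step 1 display the $\Upsilon_1$ contribution must enter the bracket with $+\frac{1}{\eta}$, not $-\frac{1}{\eta}$, since that is the sign for which the fourth relation gives exactly $p$; and where the paper bounds $\Delta\Upsilon_1$ absolutely by $O(e^{-2\varepsilon p})$ via \eqref{x.9}, silently absorbing this into the right-hand side (which is $\gg 1$ on $|y|\le\beta^{-k\theta}$), you compare directly with the right-hand side at $|y|=\beta^{-k\theta}$ via \eqref{x.7} and \eqref{x.11}, which makes that absorption explicit.
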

\begin{proof}
The argument is similar to Lemma \ref{lem2.3}.
According to \eqref{1.6}, \eqref{p.2}, \eqref{p.4}, Lemma \ref{lem2.1} and \eqref{1.11}, we can get
\begin{align}\label{2.108}
\Upsilon\left(\sqrt[k]{\beta^{k}y+\rho^{k}}\right)
&= \tau\Bigl(\mathrm{P}U_{1}\!\left(\sqrt[k]{\beta^{k}y+\rho^{k}}\right)
   +\frac{\mathrm{P}V_{1}\!\left(\sqrt[k]{\beta^{k}y+\rho^{k}}\right)}{p}
   +\frac{\mathrm{P}W_{1}\!\left(\sqrt[k]{\beta^{k}y+\rho^{k}}\right)}{p^{2}}\Bigr) \notag \\
&\quad - \tau\eta\Bigl(\mathrm{P}U_{2}\!\left(\sqrt[k]{\beta^{k}y+\rho^{k}}\right)
   +\frac{\mathrm{P}V_{2}\!\left(\sqrt[k]{\beta^{k}y+\rho^{k}}\right)}{p}
   +\frac{\mathrm{P}W_{2}\!\left(\sqrt[k]{\beta^{k}y+\rho^{k}}\right)}{p^{2}}\Bigr) \notag\\
&= \tau\Biggl(\Bigl(-2(2+\alpha)+\frac{\mathcal{C}_{1}}{p}+\frac{\mathcal{C}_{2}}{p^{2}}\Bigr)\log\rho
   +2\pi\Bigl(2(2+\alpha)-\frac{\mathcal{C}_{1}}{p}-\frac{\mathcal{C}_{2}}{p^{2}}\Bigr)H(0,0) \notag \\
&\qquad +O\Bigl(\frac{\beta^{k(1-\theta)}}{\rho^{k}}
+\frac{1}{p}\frac{\delta}{\rho}
+ \frac{\delta^{2+\alpha}}{\rho^{2+\alpha}}
+\rho\Bigr)\Biggr)
 \notag\\
&\qquad -\tau\eta\Biggl(U_{0}(y)+\frac{V_{0}(y)}{p}+
\frac{W_{0}(y)}{p^2}
+2k\pi\Bigl(4-\frac{C_3}{p}-\frac{C_4}{p^2}\Bigr)H(0,0)-\log 8\notag \\
&\qquad +\Bigl(-4+\frac{C_3}{p}+\frac{C_4}{p^2}\Bigr)k\log\beta
+O\Bigl(\beta^{k(1-\theta)}+\rho^k\Bigr) \Biggr) \notag\\
&= - p\tau\eta\Bigl(1+\frac{U_{0}(y)}{p}+\frac{V_{0}
(y)}{p^2}+\frac{W_{0}(y)}{p^3}
+O\bigl(e^{-\frac{\varepsilon}{2}p}\bigr)
\Bigr),
\end{align}where \(\theta\) is small.
It follows from Lemma \ref{pe.0}, \eqref{2.108} and  \eqref{1.11} that
\begin{align*}
&\Delta \Upsilon_2\Bigl(\sqrt[k]{\beta^ky+\rho^k}\Bigr)
+\Bigl|\sqrt[k]{\beta^ky+\rho^k}\Bigr)\Bigr|^{\alpha}
\Bigl|\Upsilon\Bigl(\sqrt[k]{\beta^ky+
\rho^k}\Bigl)\Bigr|^{p-1}
\Upsilon\Bigl(\sqrt[k]{\beta^ky+\rho^k}\Bigr) \\
&= -\frac{\tau \eta k^2\bigl|\sqrt[k]{\beta^ky+\rho^k}\bigr
|^{2k-2}}{\beta^{2k}}
\Bigl(\Delta U_{0}(y)+\frac{\Delta V_{0}(y)}{p}+\frac{\Delta W_{0}(y)}{p^2}\Bigr) \\
&\quad - (p\tau\eta)^p\Bigl|\sqrt[k]{\beta^ky+\rho^k}
\Bigr)\Bigr|^{\alpha}
\Bigl(1+\frac{U_{0}(y)}{p}+\frac{V_{0}(y)}{p^2}
+\frac{W_{0}(y)}{p^3}
+O\bigl(e^{-\varepsilon p}\bigr)\Bigr)^p \\
&=\frac{\tau \eta k^2\bigl|\sqrt[k]{\beta^ky+\rho^k}\bigr
|^{2k-2}}{\beta^{2k}}e^{U_{0}}
\Bigg(1+\frac{1}{p}\Bigl(V_{0}(y)-\frac{U_{0}(y)^{2}}{2}\Bigr) \\&\quad+\frac{1}{p^{2}}\Bigl(W_{0}(y)-U_{0}(y)V_{0}(y)+
\frac{U_{0}(y)^{3}}{3}
 +\frac{1}{2}\Bigl(V_{0}(y)-\frac{U_{0}(y)^{2}}{2}\Bigr)^{2}\Bigr)\Bigg) \\&\quad -(p\tau\eta)^p
\Bigl|\sqrt[k]{\beta^ky+\rho^k}\Bigr)
\Bigr|^{\alpha}e^{U_{0}}\Bigg
 (1+\frac{1}{p}\Bigl(V_{0}(y)-\frac{U_{0}(y)^{2}}{2}
 \Bigr)\\
&\quad  +\frac{1}{p^{2}}\Bigl(W_{0}(y)-U_{0}(y)V_{0}(y)+
 \frac{U_{0}(y)^{3}}{3}
           +\frac{1}{2}\Bigl(V_{0}(y)-\frac{U_{0}
           (y)^{2}}{2}\Bigr)^{2}\Bigr)\Bigg) \\
&\quad + O\Bigl(\frac{(p\tau\eta)^p
           \rho^{\alpha}
}{(|y|^{2}+1)^{2}}\,
            \frac{\log^{6}(|y|+2)}{p^{3}}\Bigr)
\\
&= O\Bigl(\frac{\tau\rho^{2k-2}\log^6(|y|+2)}
{\beta^{2k}p^3}\frac{1}{(|y|^2+1)^2}+e^{-\varepsilon p}\Bigr) \\
&= O\Bigl(\frac{\rho^{2k-2}\log^6(|y|+2)}
{\beta^{2k}p^4}\frac{1}{(|y|^2+1)^2}+e^{-\varepsilon p}\Bigr).
\end{align*}
Since $PU_{1}$, $PV_{1}$, $PW_{1}$ satisfy \eqref{01.040}, \eqref{VY1} and  \eqref{WY1}, $U_{\alpha}$, $V_{\alpha}$, $W_{\alpha}$ satisfy \eqref{1.040}, \eqref{1.40-bis} and \eqref{1.41} and parameters $\delta$, $\beta$ and $\rho$ satisfy Lemma \ref{lem2.x}, we can obtain
\begin{align*}
\Delta \Upsilon_1\Bigl(\sqrt[k]{\beta^ky+\rho^k}\Bigr)
&= O\Bigl(\frac{\tau \rho^{\alpha} }{\delta^{2+\alpha}}e^{U_{\alpha}}\bigl(1+U_{\alpha}^4
+V_{\alpha}^2+|W_{\alpha}|\bigr)
\Bigl(\frac{\sqrt[k]{\beta^ky+\rho^k}}{\delta}\Bigr)\Bigr) \\
&= O\Bigl(\frac{\tau \rho^{\alpha} }{\delta^{2+\alpha}}\frac{\log^4\frac{\rho}{\delta}}
{\bigl(\frac{\rho}{\delta}\bigr)^{4+2\alpha}}\Bigr) 
= O\Bigl(\frac{\delta^{2+\alpha-\sigma}}
{\rho^{4+\alpha-\sigma}}\Bigr) \\
&= O\Bigl(\frac{\delta^{2+\alpha}}
{\rho^{\frac{8k\eta}{2+\alpha}+\alpha}}\Bigr) 
= O\bigl(e^{-2\varepsilon p}\bigr),
\end{align*}
where \(\sigma > 0\) satisfies \(\frac{\delta^\sigma}{\rho^{4\big(\frac{2k\eta}{
2+\alpha}-1\big)+\sigma}} \geq 1\).

\end{proof}
\subsection*{ The estimates of the error far away from the peaks.}
\begin{lemma}\label{lem2.5}
Assume $\mathcal{E}$ is as defined in \eqref{1.8}. There exist $\lambda \in (0,1)$ and $C > 0$ such that
\begin{align*}
|x| > \delta,\; |x^k - \rho^k| > \beta^k \quad \Longrightarrow \quad
|\mathcal{E}(x)| \leq \frac{C}{p}
\Bigl(\frac{\delta^{2(2+\alpha)\lambda}
|x|^{\alpha}}{|x|^{2+\alpha+2(2+\alpha)\lambda}}
+\frac{|x|^{2k-2}\beta^{2(2+\alpha)\lambda k}}{|x^k - \rho^k|^{2+2(2+\alpha)\lambda}}\Bigr)
\end{align*}
for $p$ large enough.
\end{lemma}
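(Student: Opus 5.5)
We decompose $\mathcal{E}=\Delta\Upsilon_1+\Delta\Upsilon_2+|x|^\alpha|\Upsilon|^{p-1}\Upsilon$ and bound the two Laplacian pieces and the nonlinear piece separately. By \eqref{VY1}, \eqref{WY1} and the definition of the projection, $\Delta\Upsilon_1(x)$ is $-\frac{\tau}{\delta^2}|y|^\alpha e^{U_\alpha(y)}$ multiplied by a polynomial expression in $U_\alpha,V_\alpha,W_\alpha$, evaluated at $y=x/\delta$. Since these functions grow only logarithmically, for $|x|>\delta$ we get
\[
|\Delta\Upsilon_1(x)|\le C\tau\,\frac{\delta^{2+\alpha}|x|^\alpha}{|x|^{4+2\alpha}}\,\log^4\frac{|x|}{\delta}
\le \frac{C}{p}\,\frac{\delta^{2(2+\alpha)\lambda}|x|^\alpha}{|x|^{2+\alpha+2(2+\alpha)\lambda}},
\]
for any $\lambda<\tfrac12$. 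Here the logarithm is absorbed by sacrificing a small power of $(|x|/\delta)^{(2+\alpha)(1-2\lambda)}\ge1$, and we use $\tau\sim A_k/p$ from Lemma \ref{lem2.1}. In the same way, for $|x^k-\rho^k|>\beta^k$, \eqref{VY2} and \eqref{WY2} give
\[
|\Delta\Upsilon_2(x)|\le C\tau\,\frac{|x|^{2k-2}\beta^{2k}}{|x^k-\rho^k|^4}\log^4\frac{|x^k-\rho^k|}{\beta^k}.
\]
This is bounded by the second term in the statement once $2(2+\alpha)\lambda k<2k$ and the logarithm is absorbed by the gap $\beta^{2k-2(2+\alpha)\lambda k}/|x^k-\rho^k|^{2-2(2+\alpha)\lambda}\le 1$. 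Both constraints fix $\lambda$ small, e.g. $\lambda<\frac{1}{2(2+\alpha)}$.

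The main work is the nonlinear term $|x|^\alpha|\Upsilon|^p$ in the region away from the peaks. We split this region into three parts and show that $|\Upsilon(x)|\le p\tau(1-c)$ for some $c>0$ with the appropriate quantitative decay, so that the $p$-th power is exponentially small or controlled by the bubble tails.

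In the first region, $\delta<|x|\le\delta^{\theta}$ (or more generally $|x|\le\rho/2$, away from the negative ring), we use \eqref{p.1} and \eqref{pp.3} as in the proof of Lemma \ref{lem2.3}. They give $\Upsilon(x)=\tau p\bigl(1+\frac{U_\alpha(y)}{p}+O(\frac{\log(|y|+2)}{p^2})\bigr)$. The terms coming from $PU_2$ are at most $\tau\eta\cdot O(k\log(|x|/\rho))$, which is nonpositive relative to the constant fixed by \eqref{1.11}, so they only decrease $\Upsilon$. Hence $|\Upsilon|^p\le(p\tau)^p e^{U_\alpha(y)}(1+O(\log^2/p))$. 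With $(p\tau)^p=\tau\delta^{-(2+\alpha)}$ from \eqref{1.11}, this gives a bound of the same form as $\Delta\Upsilon_1$. The difference is that it now carries the full factor $e^{U_\alpha}$ without a $1/p$ cancellation, so we must check that the leading parts cancel. That would be false; instead, for $|y|\ge\delta^{-\theta}$ the bound $\tau\delta^{-2}|y|^{-4-\alpha}$ is of order $\delta^{(2+\alpha)\theta\cdot}$ smaller and is absorbed. Where $|y|\le\delta^{-\theta}$, we simply note that the statement overlaps Lemma \ref{lem2.3}, and there the error is smaller than the claim.

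In the second region, near the negative ring with $|x^k-\rho^k|>\beta^k$, we argue symmetrically with \eqref{p.2} and \eqref{p.4}. They give $\Upsilon=-p\tau\eta(1+U_0(z)/p+\dots)$, and the tail of $e^{U_0(z)}$ times $(p\tau\eta)^p=\tau\eta k^2\rho^{2k-2-\alpha}\beta^{-2k}$ reproduces the second term.

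In the third region, far from both, we have $|\Upsilon|\le C\tau(1+|\log\rho|)$. Together with \eqref{x.7}, \eqref{x.8} and \eqref{1.11}, this should yield $|\Upsilon|\le p\tau(1-\epsilon_0)$ whenever $|x|\ge\delta^{\theta}$ and $|x^k-\rho^k|\ge\beta^{k\theta}$, since $-\log\delta$ and $-\log\beta$ are linear in $p$. Then $|x|^\alpha|\Upsilon|^p\le(p\tau)^p(1-\epsilon_0)^p$ decays like $e^{-cp}\delta^{-(2+\alpha)}(1-\epsilon_0)^p$, but this is not obviously small. We therefore instead compare it directly with $\tau\delta^{-(2+\alpha)}(|x|/\delta)^{-(2+\alpha)(1-\epsilon_1)p/p}$ using the logarithmic form $\Upsilon\approx\tau(p-(2+\alpha)\cdot2\log(|x|/\delta))$. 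This gives $|\Upsilon|^p\lesssim(p\tau)^p(|x|/\delta)^{-2(2+\alpha)(1-o(1))}$, which sits inside the first term because $\lambda<\tfrac12$.

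This uniform logarithmic comparison, $\tau^{-1}|\Upsilon(x)|\le p-2(2+\alpha)(1-o(1))\log\frac{|x|}{\delta}$ near the origin, together with its analogue around the ring, is the step I expect to be the main obstacle. It requires careful use of the parameter relations \eqref{1.11} and of Lemma \ref{lem2.x}.
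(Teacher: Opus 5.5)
Your estimates for $\Delta\Upsilon_1$ and $\Delta\Upsilon_2$ are fine and agree with the paper. The ``$1/p$ cancellation'' you worry about in the nonlinear term is not an issue: the $1/p$ comes from $\tau\sim A_k/p$ in $(p\tau)^p=\tau\delta^{-(2+\alpha)}$, exactly as for $\Delta\Upsilon_1$.

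The genuine gap is in the nonlinear term outside the two cores, and it is precisely the step you flag as the main obstacle. As you formulate it, that step is false, because you never take into account that $\Upsilon$ changes sign. By Lemma~\ref{lem2.2} and \eqref{1.11} one has, globally,
\[
\Upsilon(x)=\tau\Bigl(p-\bigl(2(2+\alpha)+O(\tfrac1p)\bigr)\log\bigl|\tfrac{x}{\delta}\bigr|+4\eta\log\bigl|\tfrac{x^k}{\rho^k}-1\bigr|+O(1)\Bigr).
\]
The term $4\eta\log|x^k/\rho^k-1|$ comes from $-\tau\eta PU_2$ and is dropped in your ``logarithmic form''. For $|x|\gg\rho$ it is $\approx 4k\eta\log(|x|/\rho)$, i.e. of order $p$.

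As a consequence, $\Upsilon$ changes sign already at $|x|\approx\delta e^{p/(2(2+\alpha))}$, which is exponentially smaller than $\rho$. It is negative on essentially all of the remaining region; for instance $\Upsilon\approx-2\tau(2k\eta-2-\alpha)\log\frac1{|x|}$ for $2\rho<|x|<1$. This has three effects:
\begin{itemize}
\item There the right-hand side of your inequality $\tau^{-1}|\Upsilon(x)|\le p-2(2+\alpha)(1-o(1))\log\frac{|x|}{\delta}$ is negative, so the inequality cannot hold.
\item The bound $|\Upsilon|^p\le(p\tau)^pe^{U_\alpha(x/\delta)}(\cdots)$ that you assert on $\{|x|\le\rho/2\}$ is unjustified, since $(1+s/p)^p\le e^s$ requires $1+s/p\ge0$, i.e. $\Upsilon\ge0$.
\item That bound is in fact false in general. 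At $|x|=2\rho$ one has $|\Upsilon|\approx p\tau a$ with $a=\frac{-2\eta\log\eta}{1+\eta}$, while $(\delta/|x|)^{2(2+\alpha)}\approx e^{-(1+a)p}$. Then $a^p\gg e^{-(1+a)p}$ whenever $ae^{1+a}>1$, which happens e.g. for $k$ near the upper end of \eqref{k.1}.
\end{itemize}

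What is missing is a second global representation, centred at the negative bubble, together with a case split according to the sign of $\Upsilon$ rather than into geometric regions. This is what the paper does in \eqref{00agn0}--\eqref{agn0}.
\begin{itemize}
\item Where $\Upsilon>0$: the expression above and $(1+s/p)^p\le Ce^s$ give $|\Upsilon|^p\le C(p\tau)^p(\delta/|x|)^{2(2+\alpha)-\sigma}(1+|x|/\rho)^{4k\eta}$ for small $\sigma>0$. The growing factor is absorbed by \eqref{x.9}.
\item Where $\Upsilon<0$: one writes $\Upsilon=-\tau\eta\bigl(p-(4+O(\frac1p))\log|\frac{x^k-\rho^k}{\beta^k}|+\frac{2(2+\alpha)}{\eta}\log|\frac x\rho|+O(1)\bigr)$. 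This gives $|\Upsilon|^p\le C(p\tau\eta)^p\bigl(\beta^k/|x^k-\rho^k|\bigr)^{4-\sigma}$ times the factor generated by $\frac{2(2+\alpha)}{\eta}\log|x/\rho|$, with $(p\tau\eta)^p=\tau\eta k^2\rho^{2k-2-\alpha}\beta^{-2k}$. This is then compared with the \emph{second} term of the claimed bound.
\end{itemize}
In the negative case, for $|x|\ge\rho/2$ the comparison needs $2-\sigma-2(2+\alpha)\lambda-\frac{2(2+\alpha)}{k\eta}>0$, i.e. \eqref{x.2}. For $|x|<\rho/2$ it needs \eqref{x.3}. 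When $2k-2-\alpha>\frac{2(2+\alpha)}{\eta}$, it also needs $|x|>\delta$ and the closeness of $\eta$ to $\eta_{\infty,k}$.

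None of these conditions appears in your plan. This is where the restrictions on $k$ actually enter, and their absence shows that the region where $\Upsilon<0$ has not been handled.
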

\begin{proof}
From \eqref{1.6} and \eqref{1.8}, we have \(\mathcal{E} = \Delta \Upsilon_1 +\Delta \Upsilon_2 + |x|^{\alpha}|\Upsilon|^{p-1}\Upsilon\). In the following, we estimate separately each term. In view of the logarithmic behavior of \(U_{\alpha},V_{\alpha},W_{\alpha}\) in \eqref{1.040}, \eqref{1.40-bis} and \eqref{1.41}, we have
\[
\begin{aligned}
\Delta\Upsilon_{1}(x) &= O\Bigl(\frac{\tau|x|^{\alpha}}{\delta^{2+\alpha}}
e^{U_{\alpha}}
\bigl(1+U_{\alpha}^{4}+V_{\alpha}^{2}+|W_{\alpha}|\bigr)\Bigl(\frac{x}{\delta}\Bigr)\Bigr) \\
&= O\Bigl(\frac{\tau|x|^{\alpha}}
{\delta^{2+\alpha}}\frac{\log^{4}\bigl|\frac{x}{\delta}\bigr|}
{\bigl|\frac{x}{\delta}\bigr|^{4+2\alpha}}\Bigr) 
= O\Bigl(\frac{1}{p}\frac{\delta^{2+\alpha-t
}}{|x|^{4+\alpha-t
}}\Bigr),
\end{aligned}
\]
where $t>0$ is a small enough constant.
Similarly, by \eqref{001.040}, \eqref{1..40} and \eqref{1..41}, we can get
\begin{align*}
\Delta \Upsilon_2(x) &= O\left(\frac{\tau|x|^{2k-2}}{\beta^{2k} } e^{U_{0}}
\bigl(1 + U_{0}^4 + V_{0}^2 + |W_{0}| \bigr) \left(\frac{x^k - \rho^k}{\beta^k}\right)\right) \\
&= O\left(\frac{\tau |x|^{2k-2} \log^4
\bigl|\frac{x^k - \rho^k}{\beta^k}\bigr|}
{\beta^{2k} \bigl|\frac{x^k - \rho^k}{\beta^k}\bigr|^4}\right) \\
&= O\left(\frac{1}{p}
\frac{\beta^{(2-t)k} |x|^{2k-2}}{|x^k - \rho^k|^{4-t}}\right).
\end{align*}
For the nonlinear term, by the definition of $\Upsilon$ in \eqref{1.6} and  the relations \eqref{1.11} between the parameters, we can get
\begin{align}\label{00agn0}
\Upsilon(x) &=
\tau \left( PU_1 + \frac{PV_1}{p} + \frac{PW_1}{p^2} \right) - \tau \eta \left( PU_2 + \frac{PV_2}{p} + \frac{PW_2}{p^2} \right)\notag
\\
&=
\tau\Bigl(p+U_{\alpha}\Bigl(\frac{x}{\delta}\Bigr)
+\frac{V_{\alpha}\bigl(\frac{x}{\delta}\bigr)}{p}
+\frac{W_{\alpha}\bigl(\frac{x}{\delta}\bigr)}{p^{2}}
+4\eta\log\Bigl|\frac{x^{k}}{\rho^{k}}-1\Bigr| \notag\\
&\qquad +O(1) +O\Bigl(|x|+\delta+\frac{\beta^{k}}{|x^{k}-
\rho^{k}|}+\rho^{k}\Bigr)\Bigr) \notag\\
&=\tau\Bigl(p+\Bigl(-2(2+\alpha)+O\big(\frac{1}
{p}\big)\Bigr)\log\Bigl|\frac{x}{\delta}\Bigr|
+4\eta\log\Bigl|\frac{x^{k}}{\rho^{k}}-1\Bigr| +O(1)\Bigr)
\end{align}
and \begin{align}\label{agn0}
\Upsilon(x) &= -\tau\eta\Bigl(p+U_{0}\Bigl(\frac{x^{k}-\rho^{k}}{\beta^{k}}\Bigr)
+\frac{V_{0}\bigl(\frac{x^{k}-\rho^{k}}{\beta^{k}}\bigr)}{p}
+\frac{W_{0}\bigl(\frac{x^{k}-\rho^{k}}{\beta^{k}}\bigr)}{p} \notag\\
&\qquad +\frac{2(2+\alpha)}{\eta}\log\Bigl|\frac{x}{\rho}\Bigr|
+O\Bigl(|x|+\frac{\alpha}{|x|}+\beta^{k}+
\rho^{k}\Bigr)+O(1)\Bigr)\notag
\\
&= -\tau\eta\Biggl(p-\left(4+O\big(\frac{1}
{p}\big)\right)\log\Bigl|\frac{x^{k}-\rho^{k}}{\beta^{k}}\Bigr|
+\frac{2(2+\alpha)}{\eta p}\log\Bigl|\frac{x}{\rho}\Bigr|
+O(1)\Biggr). \end{align}
Then, if $\Upsilon(x)>0$, from \eqref{00agn0}, we can have
\begin{align*}
\Upsilon(x)
&\leq \tau\Bigg(p+\Bigl(-2(2+\alpha)+O\big(\frac{1}
{p}\big)\Bigr)
\log\Bigl|\frac{x}{\delta}\Bigr|
+4k\eta\log\Bigl(\frac{|x|}{\rho}+1\Bigr)\Bigg)
+O\big(\frac{1}{p}\big) \\
&\leq p\tau\Bigl(1-\frac{2(2+\alpha)-\sigma}{p}\log\Bigl|\frac{x}
{\delta}\Bigr|+\frac{4k\eta}{p}\log
\Bigl(\frac{|x|}{\rho}+1\Bigr)
+\frac{C}{p}\Bigr),
\end{align*}
for any small \(\sigma>0\). In view of \eqref{x.2}, we have $4k\eta-2-\alpha+\sigma+2(2+\alpha)
\lambda>0$, which, together with
 the elementary inequality \(\bigl(1+\frac{x}{p}\bigr)^{p}\leq C e^{x}\) and $|x| > \delta$, implies
\begin{align}\label{al0}
|\Upsilon(x)|^{p} &\leq C (p\tau)^{p}\Bigl(1-\frac{2(2+\alpha)-\sigma}{p}\log\Bigl|
\frac{x}{\delta}\Bigr|+\frac{4k\eta}{p}\log
\Bigl(\frac{|x|}{\rho}+1\Bigr)
+\frac{C}{p}\Bigl)^p\notag \\
&\leq C(p\tau)^{p}\frac{\delta^{2(2+\alpha)-\sigma}}
{|x|^{2(2+\alpha)-\sigma}}
\Bigl(\frac{|x|}{\rho}+1\Bigr)^{4k\eta}\notag \\
&\leq \frac{C}{p}\frac{\delta^{2+\alpha-\sigma}}
{|x|^{2(2+\alpha)-\sigma}}
\Bigl(
\frac{|x|^{4k\eta}}{\rho^{4k\eta}}
+1\Bigr)\notag
\\
&\leq\frac{C}{p}\frac{\delta^{2(2+\alpha)\lambda}
}{|x|^{2+\alpha+2(2+\alpha)\lambda}}
\Bigl(
|x|^{4k\eta-2-\alpha+\sigma+2(2+\alpha)
\lambda}
\frac{\delta^{2+\alpha-\sigma-2(2+\alpha)\lambda}
}{\rho^{4k\eta}}+\frac{\delta^{2+\alpha-\sigma-2(2+\alpha)\lambda}
}{|x|^{2+\alpha-\sigma-
2(2+\alpha)
\lambda
}}\Bigr)
\notag \\
&\leq \frac{C}{p}\frac{\delta^{2(2+\alpha)\lambda}
}{|x|^{2+\alpha+2(2+\alpha)\lambda}},
\end{align}
since $\frac{\delta^{2+\alpha-\sigma-2(2+\alpha)\lambda}
}{\rho^{4k\eta}}\leq1$ from \eqref{x.9} and $2+\alpha-\sigma-
2(2+\alpha)\lambda>0$.                        \\
If $\Upsilon(x)<0$, by \eqref{agn0}, we have
\begin{align*}
\Upsilon(x)
\geq -p\tau\eta\Biggl(1-\frac{4-\sigma}{p}\log\Bigl|\frac{x^{k}-\rho^{k}}{\beta^{k}}\Bigr|
+\frac{2(2+\alpha)}{\eta p}\log\Bigl|\frac{x}{\rho}\Bigr|
+\frac{C}{p}\Biggr). \end{align*}
From \eqref{x.2} we have $2-\sigma-2(2+\alpha)\lambda-
\frac{2(2+\alpha)}{k\eta }>0$  for small $\sigma$ and $\lambda$. Then if $|x|\geq \frac{1}{2}\rho$, we can obtain
\begin{align}\label{a0l1}
|\Upsilon(x)|^{p} &\leq
(p\tau\eta)^{p}\Biggl(1-\frac{4-\sigma}{p}
\log\Bigl|\frac{x^{k}-\rho^{k}}
{\beta^{k}}\Bigr|
+\frac{2(2+\alpha)}{\eta pk}\log\left(\Bigl|\frac{x^{k}-\rho^{k}}
{\rho^{k}}\Bigr|+1\right)
+\frac{C}{p}\Biggr)^{p}\notag \\
&\leq \frac{C}{p}
\frac{\rho^{2k-2-\alpha}}{\beta^{2k}}
\frac{\beta^{(4-\sigma)k}}{|x^{k}-
\rho^{k}|^{4-\sigma}}
\left(\Bigl|\frac{x^{k}-\rho^{k}}
{\rho^{k}}\Bigr|+1\right)^{\frac{2(2+
\alpha)}{k\eta }}\notag
\\
&\leq \frac{C}{p}\frac{|x|^{2k-2-\alpha}\beta^{2(2+\alpha)\lambda k}}{|x^k - \rho^k|^{2+2(2+\alpha)\lambda}}
\frac{\beta^{(2-\sigma-
2(2+\alpha)\lambda
)k}}{|x^{k}-
\rho^{k}|^{2-\sigma-2(2+\alpha)\lambda }}\frac{\rho^{2k-2-\alpha}}{|x|^{2k-2-\alpha}}
\left(
\frac{|x^{k}-\rho^{k}|^{\frac{2(2+\alpha)}{
k\eta }}}{\rho^{\frac{2(2+\alpha)}{\eta }}}
+
1
\right)\notag\\
&\leq \frac{C}{p}\frac{|x|^{2k-2-\alpha}\beta^{2(2+\alpha)\lambda k}}{|x^k - \rho^k|^{2+2(2+\alpha)\lambda}}
\left(\frac{\beta^{(2-\frac{2(2+\alpha)}{\eta }-\sigma-
2(2+\alpha)\lambda
)k}}{|x^{k}-
\rho^{k}|^{2-\sigma-2(2+\alpha)\lambda-
\frac{2(2+\alpha)}{k\eta }
 }}
+
1
\right)
\notag\\
&\leq \frac{C}{p}\frac{|x|^{2k-2-\alpha}\beta^{2(2+\alpha)\lambda k}}{|x^k - \rho^k|^{2+2(2+\alpha)\lambda}}
.
\end{align}
From Lemma \ref{lem2.1}, we have  $\frac{\beta^{k}}{
\rho^{k}}\leq Ce^{-\frac{p}{2}\big(\frac{1}{2} + \frac{\log \eta}{\eta+1}\big)}$, then
if $|x|< \frac{1}{2}\rho$,  we have
\begin{align*}
|\Upsilon(x)|^{p} &\leq (p\tau\eta)^{p}\Biggl(1-\frac{4-\sigma}{p}\log\Bigl|\frac{x^{k}-\rho^{k}}{\beta^{k}}
\Bigr|+\frac{2(2+\alpha)}{\eta p}\log\Bigl|\frac{x}{\rho}\Bigr|
+\frac{C}{p}\Biggr)^{p} \\
&\leq \frac{C}{p}
\frac{\rho^{2k-2-\alpha}}{\beta^{2k}}
\frac{\beta^{(4-\sigma)k}}{|x^{k}-
\rho^{k}|^{4-\sigma}}
\frac{|x|^{\frac{2(2+\alpha)}{\eta }}}{\rho^{\frac{2(2+\alpha)}{\eta }}}
\\
&\leq \frac{C}{p}\frac{|x|^{2k-2-\alpha}\beta^{2(2+\alpha)\lambda k}}{|x^k - \rho^k|^{2+2(2+\alpha)\lambda}}
\frac{\beta^{(2-\sigma-
2(2+\alpha)\lambda
)k}}{|x^{k}-
\rho^{k}|^{2-\sigma-2(2+\alpha)\lambda }}\frac{\rho^{2k-2-\alpha}}{|x|^{2k-2-\alpha}}
\frac{|x|^{\frac{2(2+\alpha)}{\eta }}}{\rho^{\frac{2(2+\alpha)}{\eta }}}
\\
&\leq \frac{C}{p}\frac{|x|^{2k-2-\alpha}\beta^{2(2+\alpha)\lambda k}}{|x^k - \rho^k|^{2+2(2+\alpha)\lambda}}
e^{-\frac{(2-\sigma-
2(2+\alpha)\lambda
)}{2}\big(\frac{1}{2} + \frac{\log \eta}{\eta+1}\big)p}
\frac{\rho^{2k-2-\alpha-\frac{2(2+\alpha)}{\eta }}}
{|x|^{2k-2-\alpha-\frac{2(2+\alpha)}{\eta }}}.
\end{align*}
If $2k-2-\alpha-\frac{2(2+\alpha)}{\eta }\leq0$, by \eqref{x.3} and $|x|< \frac{1}{2}\rho$, we have
\begin{align}\label{a0l01}
|\Upsilon(x)|^{p} \leq \frac{C}{p}\frac{|x|^{2k-2-\alpha}\beta^{2(2+\alpha)\lambda k}}{|x^k - \rho^k|^{2+2(2+\alpha)\lambda}}
.
\end{align}
From Lemma \ref{lem2.1}, we have $\frac{\rho}
{\delta}\leq C
e^{\frac{ p}{2+\alpha}\big(\frac{1}{2} - \frac{\eta\log \eta}{\eta+1}\big)
}$, then if $2k-2-\alpha-\frac{2(2+\alpha)}{\eta }>0$, by \eqref{n1} and \eqref{x.1}, we have
\begin{align}\label{al1}
|\Upsilon(x)|^{p} &\leq \frac{C}{p}\frac{|x|^{2k-2-\alpha}\beta^{2(2+\alpha)\lambda k}}{|x^k - \rho^k|^{2+2(2+\alpha)\lambda}}
e^{-\frac{(2-\sigma-
2(2+\alpha)\lambda
)}{2}\big(\frac{1}{2} + \frac{\log \eta}{\eta+1}\big)p}
e^{\big( 1- \frac{2\eta\log \eta}{\eta+1}\big)\big(\frac{1}{\eta_{\infty,k}}-\frac{1}{\eta}
\big)p
}
\notag\\
&\leq \frac{C}{p}\frac{|x|^{2k-2-\alpha}\beta^{2(2+\alpha)\lambda k}}{|x^k - \rho^k|^{2+2(2+\alpha)\lambda}}
.
\end{align}
Combining \eqref{al0}, \eqref{a0l1}, \eqref{a0l01} with \eqref{al1}, we can get
\begin{align*}
|\Upsilon(x)|^{p} \leq \frac{C}{p}
\frac{\delta^{\sigma-2(2+\alpha)\lambda}
}{|x|^{\sigma-2(2+\alpha)\lambda}}
\frac{\delta^{2(2+\alpha)\lambda}
}{|x|^{2+\alpha+2(2+\alpha)\lambda}}+
\frac{C}{p}\frac{|x|^{2k-2-\alpha}\beta^{2(2+\alpha)\lambda k}}{|x^k - \rho^k|^{2+2(2+\alpha)\lambda}}.
\end{align*}
Above all, we can conclude that
\[|x|^{\alpha}
|\Upsilon(x)|^{p}\leq \frac{C}{p}
\Bigl(\frac{\delta^{2(2+\alpha)\lambda}
|x|^{\alpha}}{|x|^{2+\alpha+2(2+\alpha)\lambda}}
+\frac{|x|^{2k-2}\beta^{2(2+\alpha)\lambda k}}{|x^k - \rho^k|^{2+2(2+\alpha)\lambda}}\Bigr).
\]
\end{proof}

\subsection*{The size of the error in a suitable norm}

For \(h\in L^{\infty}(\Omega)\), we consider the weighted norm
\begin{align}\label{h1}
\|h\|_{*}:=\sup_{x\in\Omega}
\left|
\frac{h(x)}
{\displaystyle\frac{\delta^{(2+\alpha)\lambda}
|x|^{\alpha}}{(|x|^{2+\alpha}+\delta^{2+\alpha
})^{1+\lambda}}
+\frac{\beta^{(2+\alpha)k\lambda}|x|^{2k-2}}
{(|x^{k}-\rho^{k}|^{2}+\beta^{2k})^{1+
\frac{2+\alpha}{2}\lambda}}}
\right|,
\end{align}
where \(\lambda\in(0,1)\) is given by Lemma \ref{lem2.5}.
 Since
\begin{align*}
|x|^{\alpha}e^{U_{1}(x)}=\frac{2(2+\alpha
)^{2}\delta^{2+\alpha}|x|^{\alpha}}{(|x|^{2+\alpha} + \delta^{2+\alpha})^{2}}\leq\frac{2(2+\alpha
)^{2}\delta^{(2+\alpha)\lambda}|x|^{\alpha}}{
(|x|^{2+\alpha} + \delta^{2+\alpha})^{1+\lambda}}
\end{align*}
and\begin{align*}|x|^{2k-2}e^{U_{2}(x)}=\frac{8k^2
\beta^{2k}|x|^{2k-2}}{(|x^k - \rho^k|^{2} + \beta^{2k})^2}
\leq\frac{8k^2
\beta^{(2+\alpha)\lambda k}|x|^{2k-2}}{(|x^k - \rho^k|^{2} + \beta^{2k})^{1+\frac{2+\alpha}{2}\lambda}},
\end{align*}
then we can deduce
\begin{align}\label{h2}
\bigl\||x|^{\alpha}e^{U_{1}(x)}\bigr\|_{*}+\bigl\||x|^{2k-2}e^{U_{2}(x)}\bigr\|_{*}\le C.
\end{align}
It's easy to check that for any \(h\in L^{\infty}(\Omega)\) one has
\begin{align}\label{h3}
\int_{\Omega}|h|\le\int_{\Omega}\|h\|_{*}
\Bigl(\displaystyle\frac{\delta^{(2+\alpha)\lambda}
|x|^{\alpha}}{(|x|^{2+\alpha}+\delta^{2+\alpha
})^{1+\lambda}}
+\frac{\beta^{(2+\alpha)k\lambda}|x|^{2k-2}}
{(|x^{k}-\rho^{k}|^{2}+\beta^{2k})^{1+
\frac{2+\alpha}{2}\lambda}}\Bigr)\,
\mathrm{d}x\le C\|h\|_{*},\end{align}
which is crucial in subsequent calculations.
\begin{proposition}\label{lem2.6}
There exist \(C\) such that
\begin{align*}
\|\mathcal{E}\|_{*}\le\frac{C}{p^{4}}, \end{align*} 
for $p$ large enough, where $\mathcal{E}= \Delta\Upsilon+|x|^{\alpha}|\Upsilon
|^{p-1}\Upsilon$ and $\Upsilon$ is defined in \eqref{1.6}.
\end{proposition}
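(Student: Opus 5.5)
The plan is to split $\Omega$ into three regions, matching the three pointwise estimates already proved, and on each region compare the pointwise bound with the weight
$$\Phi(x):=\frac{\delta^{(2+\alpha)\lambda}|x|^{\alpha}}{(|x|^{2+\alpha}+\delta^{2+\alpha})^{1+\lambda}}+\frac{\beta^{(2+\alpha)k\lambda}|x|^{2k-2}}{(|x^{k}-\rho^{k}|^{2}+\beta^{2k})^{1+\frac{2+\alpha}{2}\lambda}}$$
that defines $\|\cdot\|_*$ in \eqref{h1}. The regions are:
\begin{itemize}
\item $A_1=\{|x|\le \delta^{1-\theta}\}$, near the positive peak;
\item $A_2=\{|x^k-\rho^k|\le \beta^{k(1-\theta)}\}$ and its rotated copies, near the negative peaks (by $k$-symmetry it suffices to treat the copy near $\rho$);
\item $A_3$, the complement of $A_1\cup A_2$.
\end{itemize}
Lemma \ref{lem2.x} guarantees that these regions are well separated, since $\delta^{1-\theta}\ll\rho$ and $\beta^{1-\theta}\ll\rho$.

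In $A_1$ we write $x=\delta y$ with $|y|\le\delta^{-\theta}$ and use Lemma \ref{lem2.3}:
$$|\mathcal E(\delta y)|\le \frac{C|y|^\alpha\log^6(|y|+2)}{\delta^2p^4(|y|^{2+\alpha}+1)^2}.$$
In the same variables the first term of $\Phi$ equals
$$\delta^{-2}|y|^\alpha(|y|^{2+\alpha}+1)^{-1-\lambda}.$$
The ratio of the two is therefore bounded by
$$p^{-4}\log^6(|y|+2)(|y|^{2+\alpha}+1)^{\lambda-1},$$
which is uniformly bounded because $\lambda<1$. This gives $\sup_{A_1}|\mathcal E|/\Phi\le Cp^{-4}$.

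In $A_2$ we set $x^k=\beta^k y+\rho^k$ and use Lemma \ref{lem2.4}. On this region $|x|\sim\rho$, so $|x|^{2k-2}\sim\rho^{2k-2}$, and the second term of $\Phi$ becomes
$$\rho^{2k-2}\beta^{-2k}(|y|^2+1)^{-1-\frac{2+\alpha}{2}\lambda}.$$
The ratio with the bound of Lemma \ref{lem2.4} is then
$$p^{-4}\log^6(|y|+2)(|y|^2+1)^{\frac{2+\alpha}{2}\lambda-1}.$$
This is bounded provided $(2+\alpha)\lambda<2$, which we may assume by shrinking $\lambda$ in Lemma \ref{lem2.5}. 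One point needs care here. The $e^{-\varepsilon p}$ remainders produced inside the proof of Lemma \ref{lem2.4}, together with $\Delta\Upsilon_1=O(e^{-2\varepsilon p})$, are not multiplied by the weight. They are nonetheless harmless, because on $A_2$ the weight satisfies $\Phi\ge c\rho^{2k-2}\beta^{-2k+(2+\alpha)k\lambda\theta}$-type lower bounds. By Lemma \ref{lem2.1} this lower bound is at worst exponentially small with a rate we can make smaller than $\varepsilon p$ by taking $\lambda,\theta$ small, so $e^{-\varepsilon p}/\Phi=O(p^{-4})$.

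The region $A_3$ is the main obstacle. Lemma \ref{lem2.5} gives only a bound of size $C/p$ times quantities similar to $\Phi$, with exponent $2(2+\alpha)\lambda$ instead of $(2+\alpha)\lambda$, and we need $p^{-4}$. The plan is to exploit the gain in the exponent. In $A_3$ we have $|x|\ge\delta^{1-\theta}$, hence $\delta/|x|\le\delta^{\theta}$. Comparing the first term of the Lemma \ref{lem2.5} bound with the first term of $\Phi$ gives a ratio of
$$\big(\delta/|x|\big)^{(2+\alpha)\lambda}\le \delta^{\theta(2+\alpha)\lambda}\le e^{-c p}.$$
In the same way, $|x^k-\rho^k|\ge\beta^{k(1-\theta)}$ gives a factor $\beta^{k\theta(2+\alpha)\lambda}\le e^{-cp}$ for the second term. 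After reconciling $|x^k-\rho^k|$ with $(|x^k-\rho^k|^2+\beta^{2k})^{1/2}$, which is legitimate since $|x^k-\rho^k|\ge\beta^k$ there, the ratio becomes $O(p^{-1}e^{-cp})=O(p^{-4})$.

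The one point needing checking is the mismatch in Lemma \ref{lem2.5} between $|x|^{2+\alpha+2(2+\alpha)\lambda}$ and $(|x|^{2+\alpha}+\delta^{2+\alpha})^{1+\lambda}$. The $\lambda$-powers must be matched (note $2(2+\alpha)\lambda$ versus $(2+\alpha)\lambda$), and if Lemma \ref{lem2.5} is read with $\lambda$ replaced by $\lambda/2$ the extra decay still comes from $\theta$. Combining the three regions yields $\|\mathcal E\|_*\le Cp^{-4}$.
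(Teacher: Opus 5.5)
Your proposal is correct and follows the paper's proof: the same three-region split using Lemmas \ref{lem2.3}, \ref{lem2.4} and \ref{lem2.5}, with the region away from the peaks handled by the gains $(\delta/|x|)^{(2+\alpha)\lambda}\le\delta^{\theta(2+\alpha)\lambda}$ and $(\beta^k/|x^k-\rho^k|)^{(2+\alpha)\lambda}\le\beta^{k\theta(2+\alpha)\lambda}$, which beat the factor $p^{-1}$. Your closing worry about a mismatch is unfounded, since the exponents already line up to give exactly those factors and no rescaling of $\lambda$ is needed; your extra care with the $e^{-\varepsilon p}$ remainders on $A_2$ and with the implicit requirement $(2+\alpha)\lambda<2$ covers points the paper passes over without comment.
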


\begin{proof}

If \(\bigl|\frac{x}{\delta}\bigr|\le\frac{1
}{\delta^{\theta}}\), then, by Lemma \ref{lem2.3}, we have
\begin{align}\label{h4}
|\mathcal{E}(x)|\le\frac{C|\frac{x}{\delta}|^{\alpha}}{\delta^{2}p^{4}}\,
\frac{\log^{6}(|\frac{x}{\delta}|^{2+
\alpha}+2)}{(|\frac{x}{\delta}|^{2+\alpha}+1 )^{2}}
\le\frac{C|x|^{\alpha}}{\delta^{2+\alpha}p^{4}}
\frac{\bigl(|\frac{x}{\delta}|^{2+\alpha}
+1\bigr)^{1-\lambda}}
{\bigl(|\frac{x}{\delta}|^{2+\alpha}+1\bigr)^{2}}
=\frac{C}{p^{4}}\displaystyle\frac{|x|^{\alpha}\delta^{(2+\alpha)\lambda}
}{(|x|^{2+\alpha}+\delta^{2+\alpha
})^{1+\lambda}}.
\end{align}
Similarly, if \(\bigl|\frac{x^{k}-\rho^{k}}{\beta^{k}}\bigr|\le\frac{1}{\beta^{k\theta}}\), then we have $\frac{1}{2}\rho\leq|x|\leq2\rho$, which, together with Lemma \ref{lem2.4}, yields
\begin{align}\label{h5}
|\mathcal{E}(x)|&\le C\frac{\rho^{2k-2}\log^{6}(|\frac{x^{k}-\rho^{k}}{\beta^{k}}
|+2)}
            {\beta^{2k}p^{4}\,(|\frac{x^{k}-
            \rho^{k}}{\beta^{k}}|^{2}+1)^{2}}
\le\frac{C\rho^{2k-2}}{p^{4}\beta^{2k}}
\frac{(|\frac{x^{k}-
            \rho^{k}}{\beta^{k}}|^{2}+1
            )^{1-\frac{2+\alpha}{2}\lambda}}
{(|\frac{x^{k}-
            \rho^{k}}{\beta^{k}}|^{2}+1)^{2}}\notag\\
&\leq \frac{C}{p^{4}}
\frac{\beta^{(2+\alpha)k\lambda}|x|^{2k-2}}
{(|x^{k}-\rho^{k}|^{2}+\beta^{2k})^{1+
\frac{2+\alpha}{2}\lambda}}.
\end{align}
 If $\bigl|\frac{x}{\delta}\bigr|>\frac{1}{\delta^{\theta}}$ and $\bigl|\frac{x^{k}-\rho^{k}}{\beta^{k}}\bigr|>\frac{1}{\beta^{k\theta}}$, then $|x|>\delta^{1-\theta}$ and $\bigl|x^{k}-\rho^{k}\bigr|>\beta^{k(1-\theta)}$, which, together with Lemma \ref{lem2.1} and Lemma \ref{lem2.5}, implies
\begin{align}\label{h6}
|\mathcal{E}(x)| &\le\frac{C}{p}
\Bigl(\frac{\delta^{2(2+\alpha)\lambda}
|x|^{\alpha}}{|x|^{2+\alpha+2(2+\alpha)\lambda}}
+\frac{|x|^{2k-2}\beta^{2(2+\alpha)\lambda k}}{|x^k - \rho^k|^{2+2(2+\alpha)\lambda}}\Bigr)\notag \\& \le\frac{C}{p}
\Bigl(\frac{\delta^{(2+\alpha)(1+\theta)\lambda}
|x|^{\alpha}}{|x|^{2+\alpha+(2+\alpha)\lambda}}
+\frac{|x|^{2k-2}\beta^{(2+\alpha)(1+\theta)\lambda k}}{|x^k - \rho^k|^{2+(2+\alpha)\lambda}}\Bigr)\notag\\
&\leq\frac{C}{p^{4}}\Bigg(\displaystyle\frac{|x|^{\alpha}\delta^{(2+\alpha)\lambda}
}{(|x|^{2+\alpha}+\delta^{2+\alpha
})^{1+\lambda}}+
\frac{\beta^{(2+\alpha)k\lambda}|x|^{2k-2}}
{(|x^{k}-\rho^{k}|^{2}+\beta^{2k})^{1+
\frac{2+\alpha}{2}\lambda}}\Bigg).
\end{align}
Then from \eqref{h4}, \eqref{h5} and \eqref{h6}, we can conclude that $$\|\mathcal{E}\|_{*}\le\frac{C}{p^{4}}.$$

\end{proof}
\section{ The linear analysis}

The main purpose of this section is to prove that the linear operator $\mathcal{L}$,  defined in \eqref{1.9}, is invertible on a codimension-one subspace of $H^1_{0,k}(\Omega)$.  As a preliminary step, we review the classical results on the linearized equations in the whole space (see \cite{BP,LIA,MP}) and fix some notations.

From \cite{GGN}, we have that
if $\alpha=2m$ for some integer $m\geq1$, all the solutions of the equation:
\begin{align}\label{j.0}
-\Delta V(x)=|x|^{\alpha}e^{U_{\alpha}(x)}V(x),\qquad x\in\mathbb{R}^{2}
\end{align} are linear combination of
\begin{align}\label{j.01}
V_{\alpha,0}(x)
=\frac{|x|^{2+\alpha}-1}
{|x|^{2+\alpha}+
1}, \ \  V_{\alpha,1}(x)
=\frac{|x|^{m+1}\displaystyle\cos((
m+1)\theta)}
{|x|^{2+\alpha}+
1}, \ \
V_{\alpha,2}(x)
=\frac{|x|^{m+1}
\displaystyle\sin((
m+1)\theta)}
{|x|^{2+\alpha}+
1},
\end{align}
if $\alpha\notin2\mathbb{N}$, the solutions to equation \eqref{j.0} are spanned by
\begin{align}\label{j.02}V_{\alpha,0}(x)
=\frac{|x|^{2+\alpha}-1}
{|x|^{2+\alpha}+
1}
. \end{align}So,
if $\alpha=2m$, all the solutions of the equation: \begin{align}\label{J.0}
-\Delta V(x)=|x|^{\alpha}e^{U_{1}(x)}V(x),\qquad x\in\mathbb{R}^{2}
\end{align} are linear combination of
\begin{align}\label{z.06}V_{\alpha,0,\delta}(x)
=\frac{|\frac{x}{\delta}|^{2+\alpha}-1}
{|\frac{x}{\delta}|^{2+\alpha}+
1}, \ \  V_{\alpha,1,\delta}(x)
=\frac{|\frac{x}{\delta}|^{m+1}
\displaystyle\cos((
m+1)\theta)}
{|\frac{x}{\delta}|^{2+\alpha}+
1}, \ \
V_{\alpha,2,\delta}(x)
=\frac{|\frac{x}{\delta}|^{m+1}
\displaystyle\sin((
m+1)\theta)}
{|\frac{x}{\delta}|^{2+\alpha}+
1}.
\end{align}
If $\alpha\notin2\mathbb{N}$, the solutions to equation \eqref{J.0} are spanned by
\begin{align}\label{z.06-bis}V_{\alpha,0,\delta}(x):
=V_{\alpha,0}(\frac{x}{\delta})
=\frac{|\frac{x}{\delta}|^{2+\alpha}-1}
{|\frac{x}{\delta}|^{2+\alpha}+
1}
=\frac{|x|^{2+\alpha}-\delta^{2+\alpha}}
{|x|^{2+\alpha}+\delta^{2+\alpha}}. \end{align}

Furthermore, let \(U_{2}\) be the profile defined in \eqref{u01}. It is also known that all the solutions of the linear equation
\begin{align}\label{z.06-ter}
-\Delta Z(x)=|x|^{2k-2}e^{U_{2}(x)}Z(x),\qquad x=(x_{1},x_{2})\in\mathbb{R}^{2}
\end{align}
are linear combination of
\begin{align}\label{z.6}
Z_{0,2}(x) &= Z_{0}\left(\frac{x^{k}-\rho^{k}}{\beta^{k}}\right)=\frac{|x^{k}-\rho^{k}|^{2}-\beta^{2k}}{|x^{k}-\rho^{k}|^{2}+\beta^{2k}}, \notag \\
Z_{1,2}(x) &= Z_{1}\left(\frac{x^{k}-\rho^{k}}{\beta^{k}}
\right)=\frac{4\beta^{k}\bigl(
|x|^{k}\cos k\theta-\rho^{k}\bigr)}{|x^{k}-\rho^{k}|^{2}+\beta^{2k}}, \notag \\
Z_{2,2}(x) &= Z_{2}\left(\frac{x^{k}-\rho^{k}}{\beta^{k}}
\right)=\frac{4\beta^{k}|x|^k\sin k\theta}{|x^{k}
-\rho^{k}|^{2}+\beta^{2k}},
\end{align}
where
\begin{align}\label{pz.06}
Z_{0}(x)=\frac{|x|^{2}-1}{|x|^{2}+1},\quad
Z_{1}(x)=\frac{4x_{1}}{|x|^{2}+1},\quad
Z_{2}(x)=\frac{4x_{2}}{|x|^{2}+1}. \end{align}
It's obvious that
\begin{align}\label{z0}
\int_{\mathbb{R}^{2}}e^{U_{0}}Z_{i}Z_{j}=0\;\text{ if }\;i,j=0,1,2,\;i\neq j.
\end{align}
We set
\begin{align}\label{00z}
\widetilde{Z}(x):=Z_{1,2}(x) \end{align}
and we ask that the remainder term \(\phi\)  satisfies the orthogonality condition:
\begin{align}\label{0z}
\int_{\Omega}\nabla P\widetilde{Z}\cdot\nabla\phi
=\int_{\Omega}|x|^{2k-2}e^{U_{2}(x)}\widetilde{Z}(x)\phi(x)\,\mathrm{d}x=0. \end{align}

Define the \(k\)-symmetric functions :
\begin{align}\label{L0}
L^{p}_{k}(\Omega):=\Bigl\{u\in L^{p}(\Omega):\;u(x)=u(\overline{x})
=u\!\left(e^{\mathrm{i}\frac{2\pi}{k}}x\right)\text{ for a.e. }x\in\Omega\Bigr\}\subseteq L^{p}(\Omega)
\end{align}
and the \(k\)-symmetric functions \(H^{1}_{0,k}(\Omega)\subseteq H^{1}_{0}(\Omega)\) defined in \eqref{1.3}. Let us observe that \(H^{1}_{0,k}(\Omega)\) splits as the direct sum of its subspaces
\begin{align*}
\mathbf{Z}:=\bigl\{cP\widetilde{Z}:\;c\in\mathbb{R}\bigr\}\quad\text{and}\quad
\mathbf{Z}^{\perp}:=\Bigl\{\phi\in H^{1}_{0,k}(\Omega):\;
\int_{\Omega}|x|^{2k-2}e^{U_{2}(x)}\widetilde{Z}(x)\phi(x)\,\mathrm{d}x=0\Bigr\}.
\end{align*}

We now state the main result of this section, which establishes the invertibility of the linear operator $\mathcal{L}$, defined in \eqref{1.9}, on the space $\mathbf{Z}^{\perp}$.
\begin{proposition}\label{p.3}
Assume $\mathcal{L}$ is the linear operator introduced in \eqref{1.9} and that the parameters \(\delta,\beta,\rho,\tau,\eta\) in the definition of \(\Upsilon\) satisfy \eqref{1.11} and \eqref{x.1} with $\eta_{\infty,k}$ as defined in \eqref{n1}. Then there exists \(C>0\) such that, for any \(h\in L^{\infty}_{k}(\Omega)\), the linear problem
\begin{equation}\label{1.l1}
\begin{cases}
\mathcal{L}\phi(x)=h(x)+c|x|^{2k-2}e^{U_{2}(x)}\widetilde{Z}(x) & x\in\Omega \\[4pt]
\phi(x)=0 & x\in\partial\Omega\end{cases}
\end{equation}has a unique solution $(\phi, c) \in H_{0,k}^1(\Omega) \cap L_k^\infty(\Omega) \times \mathbb{R}$ such that $\phi$ satisfies \eqref{0z} and
\begin{equation}\label{1.l2}
c = \frac{p \displaystyle\int_{\Omega}|x|^{\alpha} |\Upsilon|^{p-1} P\widetilde{Z}\phi -hP\widetilde{Z}}
{\int_{\Omega}\big|\nabla P\widetilde{Z}\big|^{2}}.
\end{equation}
Furthermore
\begin{align}\label{ch1}
\|\phi\|_\infty + |c| \leq C p \|h\|_*, \end{align}
where the weighted norm $\|\cdot\|_*$ is defined in \eqref{h1}.
\end{proposition}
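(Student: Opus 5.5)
The plan follows the standard Lyapunov--Schmidt scheme of del Pino--Kowalczyk--Musso and \cite{LIA,PT}. The core is an a priori estimate: for any solution $(\phi,c)$ of \eqref{1.l1} with $\phi$ satisfying \eqref{0z}, we will show $\|\phi\|_\infty\le Cp\|h\|_*$. Existence and uniqueness then follow from the Fredholm alternative in the Hilbert space $\mathbf{Z}^\perp$. There we write the problem as $\phi-T(\phi)=\tilde h$ with $T$ compact, namely $T\phi=\Pi_{\mathbf Z^\perp}(-\Delta)^{-1}(p|x|^\alpha|\Upsilon|^{p-1}\phi)$. The a priori bound with $h=0$ gives injectivity. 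Symmetry is preserved throughout because $\Upsilon$ is $k$-symmetric.

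Formula \eqref{1.l2} for $c$ comes from testing \eqref{1.l1} against $P\widetilde Z$. We use $\int\nabla P\widetilde Z\cdot\nabla\phi=0$ and $-\Delta P\widetilde Z=|x|^{2k-2}e^{U_2}\widetilde Z$. This yields $c\int|\nabla P\widetilde Z|^2=\int (p|x|^\alpha|\Upsilon|^{p-1}\phi-h)P\widetilde Z$. We then estimate $\int|\nabla P\widetilde Z|^2\sim$ const, which follows by scaling since $\widetilde Z$ is the translation mode of the $U_0$ bubble in the variable $(x^k-\rho^k)/\beta^k$. Next, $|\int hP\widetilde Z|\le C\|h\|_*$ by \eqref{h3}. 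Finally, we use that $p|x|^\alpha|\Upsilon|^{p-1}$ is close to $|x|^\alpha e^{U_1}+|x|^{2k-2}e^{U_2}$ in $\|\cdot\|_*$; this follows from \eqref{2.18}, \eqref{2.108}, Lemma \ref{pe.0} and the far-field bounds of Lemma \ref{lem2.5}. Together these give $|c|\le C(\|h\|_*+\|\phi\|_\infty)$. Once $\|\phi\|_\infty\le Cp\|h\|_*$ is known, this yields $|c|\le Cp\|h\|_*$. A sharper bound on $c$ needs the near-orthogonality $\int|x|^{2k-2}e^{U_2}\widetilde Z\,P\widetilde Z\approx 0$ up to $O(1/p)$ corrections, but it is not required here.

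The a priori estimate is proved by contradiction. Suppose there are $p_n\to\infty$ and solutions with $\|\phi_n\|_\infty=1$ and $p_n\|h_n\|_*\to0$. Rescale near the positive peak by setting $\hat\phi_n(y)=\phi_n(\delta y)$. Using \eqref{2.18} and Lemma \ref{pe.0}, $\hat\phi_n$ converges in $C^1_{loc}$ to a bounded solution of \eqref{j.0}. Symmetry kills the nonradial kernel elements: if $\alpha=2m$, the modes $\cos((m+1)\theta)$ and $\sin((m+1)\theta)$ are not $k$-invariant, since $k\nmid m+1$ because $k>\frac{3(2+\alpha)}{2}>m+1$. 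The limit is therefore $a_0V_{\alpha,0}$. Near a negative peak we set $\check\phi_n(y)=\phi_n(\sqrt[k]{\beta^ky+\rho^k})$; it converges to a bounded solution of $-\Delta Z=e^{U_0}Z$, that is, a combination of $Z_0,Z_1,Z_2$. Reflection symmetry across the $x_1$ axis kills $Z_2$, and the orthogonality \eqref{0z} kills $Z_1$, so the limit is $b_0Z_0$.

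The main obstacle is showing $a_0=b_0=0$, since the radial dilation modes are not excluded by any orthogonality condition. Following \cite{LIA,PT}, we test the equation against suitable projections of the dilation kernels, $PV_{\alpha,0,\delta}$ and $PZ_{0,2}$, or rather their modifications $\frac{|x|^{2+\alpha}-\delta^{2+\alpha}}{|x|^{2+\alpha}+\delta^{2+\alpha}}$-type functions plus $\log$ corrections. From the resulting identities we extract a $2\times2$ linear system for $(a_0,b_0)$ at leading order in $1/p$; it contains coefficients like $2(2+\alpha)$, $4\eta k$ and the interaction terms $\log\rho/\log\delta$ computed from Lemma \ref{lem2.1}. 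The system should be nondegenerate exactly under \eqref{x.2}--\eqref{x.5}, and verifying this determinant is the delicate step. The loss of a factor $p$ in \eqref{ch1} originates here.

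Once $a_0=b_0=0$, we use Green's representation $\phi_n(x)=\int G(x,y)(p|y|^\alpha|\Upsilon|^{p-1}\phi_n-h_n-c_n|y|^{2k-2}e^{U_2}\widetilde Z)$. The weight is concentrated in the shrinking regions and has mass $O(1)$, with $\phi_n\to0$ there, and $G$ has only a logarithmic singularity. A maximum principle argument on the outer region with a barrier built from the $\|\cdot\|_*$ weight then shows $\|\phi_n\|_\infty\to0$, contradicting $\|\phi_n\|_\infty=1$.
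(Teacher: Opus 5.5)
Your architecture matches the paper's: Fredholm alternative on $\mathbf Z^\perp$, the formula for $c$ by testing with $P\widetilde Z$, blow-up at both scales with $k$-symmetry and \eqref{0z} removing the non-radial kernel, dilation-type test functions with logarithmic corrections, and a barrier on the outer region. However, the treatment of $c$ has a genuine gap. You claim the a priori bound for solutions of the full problem \eqref{1.l1}, yet your contradiction argument treats the equation as $\mathcal L\phi_n=h_n$. The only control you keep is $|c_n|\le C(\|h_n\|_*+\|\phi_n\|_\infty)=O(1)$. In the variable $y=(x^k-\rho^k)/\beta^k$, the term $c_n|x|^{2k-2}e^{U_2}\widetilde Z$ becomes exactly $c_n e^{U_0}Z_1$, which does not vanish. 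So the limit of $\check\phi_n$ solves $\Delta\check\phi+e^{U_0}\check\phi=c_\infty e^{U_0}Z_1$ rather than the homogeneous equation. The barrier step also needs the whole right-hand side to be small in $\|\cdot\|_*$, i.e.\ $c_n\to0$. The sharper control of $c$ that you call unnecessary is exactly what is missing. Note also that $\int|x|^{2k-2}e^{U_2}\widetilde Z\,P\widetilde Z=\int|\nabla P\widetilde Z|^2$ tends to a positive constant, so it is not $\approx0$. The cancellation that matters is $\int p|x|^\alpha|\Upsilon|^{p-1}\phi\,P\widetilde Z=\int|x|^{2k-2}e^{U_2}\widetilde Z\phi+O(\|\phi\|_\infty/p)=O(\|\phi\|_\infty/p)$, by \eqref{ch3} and \eqref{0z}. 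The paper avoids the issue in two steps. First it proves the blow-up estimate only for $c=0$ (Lemma \ref{hD2}), which gives $\|\phi\|_\infty\le Cp(\|h\|_*+|c|)$. Then it proves $|c|\le C\|h\|_*$ by a second contradiction argument: normalize $|c|=1/p$ with $\|h\|_*=o(1/p)$, so $\|\phi\|_\infty=O(1)$, and test with $P\widetilde Z$. The left side is $\frac1p\int e^{U_0}\bigl(V_0-U_0-\frac12U_0^2\bigr)Z_1\check\phi+o(\frac1p)=o(\frac1p)$, because $\check\phi\to CZ_0$ and the limiting integrand is odd in $y_1$. The right side is $\frac1p\int|\nabla P\widetilde Z|^2+o(\frac1p)$, a contradiction. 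Inside your one-shot scheme you could instead use the sharper bound to get $c_n\to0$, or test the limit equation with $Z_1$ to get $c_\infty=0$. You would then check that the $c_n$-terms in the dilation identities vanish by oddness of $Z_1$ in $y_1$.

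Second, the step you call delicate is not carried out, and it is the heart of the matter. The paper obtains a scalar identity for $a_0$ with coefficient $\frac{4(2+\alpha)\pi}{3}\kappa_1$, where $\kappa_1=1+\bigl(\log 2(2+\alpha)^2-2\bigr)\bigl(1-\frac{4k\eta^2\log\eta}{(2k\eta-2-\alpha)(\eta+1)}\bigr)>1$. It does so by discarding the negative-peak part of the $PQ_1$-identity via $\int e^{U_0}Z_0=0$, and handles its $c_0$ (your $b_0$) the same way. Your coupled picture is actually the more accurate one. On the negative peaks $PQ_1\approx\frac{2(2+\alpha)}{3}\log\rho=O(p)$, so that part equals $\frac{2(2+\alpha)}{3}\log\rho\; m_2+o(1)$, with $m_2:=\int_{B_2}p|x|^\alpha|\Upsilon|^{p-1}\phi_n$ and $B_2$ the union of the negative-peak regions. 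Nothing forces $m_2=o(1/p)$; the fast convergence $\sup|\check\phi_n-c_0Z_0|\le p_n^{-2}$ invoked there cannot be obtained by extracting a subsequence. But then the two test identities involve four unknowns, $a_0$, $b_0$ and $\mu_i:=\lim p\,m_i$ (with $m_1$ the analogous mass near $0$), and they do not close by themselves. Evaluate Green's representation of $\phi_n$ at the edges of the two bubble regions, where $\phi_n\to a_0$ and $\phi_n\to b_0$. This supplies $2\pi a_0=\ell_\delta\mu_1+\ell_\rho\mu_2$ and $2\pi b_0=\ell_\rho\mu_1+\ell_\beta\mu_2$, with $\ell_s:=\lim_n\frac{1}{p_n}\log\frac{1}{s}$ for $s\in\{\delta,\beta,\rho\}$. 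The test identities read $2\pi\kappa_1a_0=\ell_\rho\mu_2$ and $2\pi\kappa_2b_0=\ell_\rho\mu_1$, where $\kappa_2>1$ is the analogue of $\kappa_1$ for $U_0$ (with $\log 8-2$ in place of $\log 2(2+\alpha)^2-2$). Eliminating $\mu_1,\mu_2$ gives $\kappa_1\kappa_2\ell_\delta\ell_\beta\,a_0=(\kappa_1-1)(\kappa_2-1)\ell_\rho^2\,a_0$. Since $0<\ell_\rho^2<\ell_\delta\ell_\beta$ by \eqref{x.7}--\eqref{x.8}, this forces $a_0=b_0=0$. So nondegeneracy rests on $\kappa_i>1$ and the separation of scales, not ``exactly'' on \eqref{x.2}--\eqref{x.5}. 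Without a computation of this kind, your proposal has not yet shown $a_0=b_0=0$.
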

In order to prove Proposition \ref{p.3}, we need some preliminary Lemmas. In particular, we assume that the parameters \(\delta,\beta,\rho,\tau,\eta\) in the definition of \(\Upsilon\) satisfy \eqref{1.11} and \eqref{x.1} in  Proposition \ref{p.3}, so we can use all the estimates in Section 3.

\begin{lemma}\label{lh1}
There exist $\theta$ and $C>0$ such that if $|y|\leq\frac{1}{\delta^{\theta}}$ then
\begin{align}\label{ch2}
p|\Upsilon(\delta y)|^{p-1}= \frac{e^{U_{\alpha}(y)}}{\delta^{2+\alpha}}
\Bigl(1+\frac{1}{p}\Bigl(V_{\alpha}(y)-
U_{\alpha}(y)-\frac{U_{\alpha}(y)^{2}}{2}\Bigr)
+O\Bigl(\frac{\log^{4}(|y|^{2+\alpha}+2)}
{p^{2}}\Bigr)\Bigr)
\end{align}
and if $|y|\leq\frac{1}{\beta^{k\theta}}$ then
\begin{align}\label{ch3}
p\Bigl|\Upsilon\Bigl(\sqrt[k]{\beta^{k}y+\rho^{k}}\Bigr)\Bigr|^{p-1}
= \frac{k^{2}\rho^{2k-2-\alpha}}{\beta^{2k}}e^{U_{0}(y)}
\Bigl(1+\frac{1}{p}\Bigl(V_{0}(y)-U_{0}(y)-
\frac{U_{0}(y)^{2}}{2}\Bigr)
+O\Bigl(\frac{\log^{4}(|y|+2)}{p^{2}}\Bigr)
\Bigr),
\end{align}
where $\Upsilon$ is defined in \eqref{1.6}.

Moreover, there exists \(C>0\) such that
\begin{align}\label{ch4}
p|x|^{\alpha}|\Upsilon(x)|^{p-1} \leq C\bigl(|x|^{\alpha}e^{U_{1}(x)}+|x|^{2k-2}e^{U_{2}(x)}\bigr). \end{align}where $U_{1}(x)$ and $U_{2}(x)$ are  defined in \eqref{u00} and \eqref{u01}, separately.
\end{lemma}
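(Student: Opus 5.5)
The plan is to reuse the local expansions of $\Upsilon$ obtained in the error estimates and apply Lemma \ref{pe.0} with exponent $p-1$ in place of $p$. For \eqref{ch2}, take $|y|\le\delta^{-\theta}$ with $\theta$ small. By \eqref{2.18},
\[
\Upsilon(\delta y)=\tau p\Bigl(1+\frac{U_\alpha(y)}{p}+\frac{V_\alpha(y)}{p^2}+\frac{W_\alpha(y)}{p^3}+O(e^{-\varepsilon p})\Bigr),
\]
and the bracket is positive, since $U_\alpha(y)\ge -C\log(|y|+2)\ge -C\theta\log\frac1\delta\ge -\tfrac12 p$ once $\theta$ is small (recall $\log\frac1\delta\sim b_kp$). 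Hence
\[
p|\Upsilon(\delta y)|^{p-1}=p(\tau p)^{p-1}\Bigl(1+\frac{a}{p}+\frac{b}{p^2}+\frac{c}{p^3}\Bigr)^{p}\Bigl(1+\frac{a}{p}+\dots\Bigr)^{-1},
\]
with $a=U_\alpha(y)$, $b=V_\alpha(y)$ and $c=W_\alpha(y)+O(p^3e^{-\varepsilon p})$. The hypotheses of Lemma \ref{pe.0} hold because $|a|,|b|,|c|\le C\log^2(|y|+2)\le Cp$ on this range.

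The first relation in \eqref{1.11} gives $p(\tau p)^{p-1}=(\tau p)^p/\tau=\delta^{-(2+\alpha)}$. Lemma \ref{pe.0} then gives the factor
\[
e^{U_\alpha}\Bigl(1+\frac{V_\alpha-U_\alpha^2/2}{p}+O\Bigl(\frac{\log^4}{p^2}\Bigr)\Bigr).
\]
The extra factor is $(1+a/p+\dots)^{-1}=1-\frac{U_\alpha}{p}+O\bigl(\frac{\log^2}{p^2}\bigr)$. Multiplying the two produces exactly the correction $\frac1p\bigl(V_\alpha-U_\alpha-\frac{U_\alpha^2}{2}\bigr)$. For the remainder size I will use the polynomial growth of $U_\alpha$, $V_\alpha$, $W_\alpha$ ($\log$-type) in the error term of Lemma \ref{pe.0}, keeping only the terms up to $p^{-2}$. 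This requires a slightly sharper bookkeeping of the $O(p^{-2})$ coefficient ($b^2$, $a^4$ give $\log^4$); the $a^6/p^3$ term is absorbed because $\log^2(|y|+2)\le Cp$.

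For \eqref{ch3}, I proceed identically using \eqref{2.108}: $|\Upsilon|=p\tau\eta(1+U_0/p+V_0/p^2+W_0/p^3+O(e^{-\varepsilon p/2}))$. The second relation in \eqref{1.11} gives $p(p\tau\eta)^{p-1}=\frac{k^2\rho^{2k-2-\alpha}}{\beta^{2k}}$, and the same expansion yields \eqref{ch3}.

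For the global bound \eqref{ch4}, I split $\Omega$ into three regions, as in Proposition \ref{lem2.6}.
\begin{itemize}
\item In $|x|\le\delta^{1-\theta}$, \eqref{ch2} gives $p|x|^\alpha|\Upsilon|^{p-1}\le C|x|^\alpha\delta^{-(2+\alpha)}e^{U_\alpha(x/\delta)}(1+\log^2/p)\le C|x|^\alpha e^{U_1}$. The factor $1+O(\log^4/p^2)$ is bounded here, since $\log(|y|+2)\le C\theta p$; but here I would rather bound directly, via $(1+s/p)^{p-1}\le Ce^{s}$, to avoid the expansion's error being large.
\item Near the negative peaks, \eqref{ch3} gives $C|x|^{2k-2}e^{U_2}$ in the same way, using $|x|\sim\rho$, so that $\rho^{2k-2-\alpha}|x|^\alpha\sim|x|^{2k-2}$.
\item In the remaining region, the argument of Lemma \ref{lem2.5} applies with exponent $p-1$ instead of $p$. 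The bounds \eqref{al0}, \eqref{a0l1}, \eqref{a0l01} and \eqref{al1}, multiplied by $p$, give $p|x|^\alpha|\Upsilon|^{p-1}\le C\bigl(\frac{\delta^{2(2+\alpha)\lambda}|x|^\alpha}{|x|^{2+\alpha+2(2+\alpha)\lambda}}\cdot\frac{1}{\tau p}+\dots\bigr)$.
\end{itemize}
The point to check in the remaining region is that the loss of one power ($1/|\Upsilon|\sim1/(\tau p)=O(1)$) is harmless, and that the decay rate must be compared with $e^{U_1}\sim\delta^{2+\alpha}/|x|^{4+2\alpha}$. I will redo the estimate with $\sigma$ small, obtaining the bound $\delta^{2(2+\alpha)-\sigma}|x|^{-2(2+\alpha)+\sigma}$ times the negligible $\rho$-factors. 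This is slightly weaker than $e^{U_1}$ by $(|x|/\delta)^\sigma$, so $\sigma$ must be removed. The remedy is that on this region $\Upsilon\le p\tau(1-\frac{2(2+\alpha)}{p}\log\frac{|x|}{\delta}+\frac{C}{p})$ with the $O(1/p)$ coefficient error being $O(\frac1p\log)$, which is harmless after exponentiation: it contributes $e^{O(\log(|x|/\delta)/p)}=O(1)$ because $\log(|x|/\delta)\le Cp$.

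I expect this global far-field comparison, i.e. matching the exact decay of $e^{U_1}$ and $e^{U_2}$ rather than the weaker weighted-norm profile, to be the main obstacle.
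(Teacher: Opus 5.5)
\textbf{Gap in the far field of \eqref{ch4}.} Your derivation of \eqref{ch2} and \eqref{ch3} is the paper's: the expansions \eqref{2.18} and \eqref{2.108} are fed into Lemma \ref{pe.0} and normalised by the first two relations of \eqref{1.11}. Your treatment of the two peak regions is also fine, and so is the part of the remaining region where $\Upsilon>0$. Apart from the zone where $|x|$ is of order one or larger, where $|\Upsilon|=O(1/p)$ and everything is super-exponentially small, $\Upsilon>0$ only for $|x|\ll\rho$, so the $\rho$-factor there really is harmless.

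The gap is the negative part of your third region, which you leave as ``$+\dots$'' and propose to obtain from \eqref{a0l1}--\eqref{al1}. On the annulus $2\rho<|x|<1$, \eqref{frac1} and the third relation of \eqref{1.11} give $\Upsilon(x)=4\tau(k\eta-1-\frac{\alpha}{2})\log|x|+O(\frac1p)<0$. Thus $|\Upsilon|=p\tau\eta\,u$ with
\[
u=\frac1p\Bigl(4\Bigl(k-\frac{2+\alpha}{2\eta}\Bigr)\log\frac1{|x|}+O(1)\Bigr),
\]
which decreases from about $\frac{2|\log\eta|}{1+\eta}<1$ (this is \eqref{x.3}) down to $0$. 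In particular, your claim $|\Upsilon|\sim\tau p$ is false there.

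The argument of Lemma \ref{lem2.5} replaces $u^{p-1}$ by $e^{(p-1)(u-1)}$. Combined with $p(p\tau\eta)^{p-1}=k^2\rho^{2k-2-\alpha}\beta^{-2k}$ and $\frac{2(2+\alpha)}{\eta}\log\rho-4k\log\beta=p+O(1)$, this yields, even with exact coefficients ($\sigma=0$),
\[
p|x|^\alpha|\Upsilon(x)|^{p-1}\le C\,\frac{\beta^{2k}}{|x|^{2k+2}}\Bigl(\frac{|x|}{\rho}\Bigr)^{\frac{2(2+\alpha)}{\eta}-(2k-2-\alpha)} .
\]
The right-hand side of \eqref{ch4} is comparable to $\beta^{2k}|x|^{-2k-2}$ on this annulus, and the $U_1$-term is smaller still. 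For $\eta<\eta_{\infty,k}$ the exponent $\gamma=\frac{2(2+\alpha)}{\eta}-(2k-2-\alpha)$ is positive. Hence for $|x|$ of order one your bound exceeds the target by $\rho^{-\gamma}=e^{cp}$ with $c>0$.

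Since \eqref{ch4} is needed uniformly for $\eta$ as in \eqref{x.1}, this route fails. Your proposed remedy, absorbing the $O(\frac1p)$ error in the coefficient of the logarithm, does not help. The loss comes from the inequality $\log u\le u-1$ itself, which is far from sharp when $u$ stays away from $1$.

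\textbf{The missing step.} The paper's device for this annulus is to keep the exact power $u^{p-1}$.
\begin{enumerate}
\item The map $r\mapsto r^{2k+2+\alpha}\log^{p-1}\frac1r$ is decreasing on $[e^{-\frac{p-1}{2k+2+\alpha}},1]$.
\item By \eqref{x.5} and Lemma \ref{lem2.1}, $e^{-\frac{p-1}{2k+2+\alpha}}\le C\rho$. Hence the quantity at any $|x|\in(2\rho,1)$ is bounded by its value near $|x|=2\rho$ times $(\rho/|x|)^{2k+2+\alpha}$.
\item At that endpoint the paper uses $\rho^{\frac{2(2+\alpha)}{\eta}}\beta^{-4k}=O(e^p)$ and the elementary bound $Xe^p\bigl(\frac{\log(1/X)}{p}\bigr)^p\le1$ with $X=\rho^{4(k-\frac{2+\alpha}{2\eta})}$.
\end{enumerate}
This gives $O(\beta^{2k}|x|^{-2k-2})=O(|x|^{2k-2}e^{U_2})$ uniformly in $\eta$. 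An argument of this kind, exploiting that $u$ is bounded away from $1$, has to replace your appeal to Lemma \ref{lem2.5} in the far field.
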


\begin{proof}
If \(|y|\leq\frac{1}{\delta^{\theta}}\), then from \eqref{2.18} and \eqref{pe.1}, we can get
\begin{align*}
p|\Upsilon(\delta y)|^{p-1}
&= p\Bigl(\tau\Bigl(p+U_{\alpha}(y)+\frac{V_{\alpha}(y)}{p}
+O\Bigl(\frac{\log(|y|^{2+\alpha}+2)}{p^{2}}\Bigr)\Bigr)\Bigr)^{p-1} \\
&= \frac{1}{\delta^{2+\alpha}}\Bigl(1+\frac{U_{\alpha}
(y)}{p}+\frac{V_{\alpha}(y)}{p^{2}}
+O\Bigl(\frac{\log(|y|+2)}{p^{3}}\Bigr)\Bigr)^{p-1} \\
&= \frac{e^{U_{\alpha}(y)}}{\delta^{2+
\alpha}}\Bigl(1+\frac{1}{p}\Bigl(V_{\alpha}
(y)-\frac{U_{\alpha}(y)^{2}}{2}\Bigr)
+O\Bigl(\frac{\log^{4}(|y|^{2+\alpha}
+2)}{p^{2}}\Bigr)\Bigr) \\
&\quad\times\Bigl(1-\frac{U_{\alpha}
(y)}{p}+O\Bigl(\frac{\log(|y|+2)}{p^{2}}\Bigr)\Bigr) \\
&= \frac{e^{U_{\alpha}(y)}}{\delta^{2+
\alpha}}\Bigl(1+\frac{1}{p}\Bigl(V_{\alpha}
(y)-U_{\alpha}(y)-\frac{U_{\alpha}(y)^{2}}{2}\Bigr)
+O\Bigl(\frac{\log^{4}(|y|^{2+\alpha}+2)}
{p^{2}}\Bigr)\Bigr)
\end{align*}and this proves \eqref{ch2} when $|y|\leq\frac{1}{\delta^{\theta}}$.

With the aid of \eqref{2.108} and \eqref{pe.1}, we can obtain
\begin{align*}
p\Bigl|\Upsilon\Bigl(\sqrt[k]{\beta^{k}y+\rho^{k}}\Bigr)\Bigr|^{p-1}
&= p\Bigl(\tau\eta\Bigl(p+U_{0}(y)+\frac{V_{0}(y)}{p}
+O\Bigl(\frac{\log(|y|+2)}{p^{2}}\Bigr)\Bigr)\Bigr)^{p-1} \\
&= \frac{k^{2}\rho^{2k-2-\alpha}}{\beta^{2k}}
\Bigl(1+\frac{U_{0}(y)}{p}+\frac{V_{0}(y)}{p^{2}}
+O\Bigl(\frac{\log(|y|+2)}{p^{3}}\Bigr)\Bigr)^{p-1} \\
&= \frac{k^{2}\rho^{2k-2-\alpha}}{\beta^{2k}} e^{U_{0}(y)}\Bigl(1+\frac{1}{p}\Bigl(V_{0}(y)
-\frac{U_{0}(y)^{2}}{2}\Bigr)
+O\Bigl(\frac{\log^{4}(|y|+2)}{p^{2}}\Bigr)\Bigr) \\
&\quad\cdot\Bigl(1-\frac{U_{0}(y)}{p}+
O\Bigl(\frac{\log(|y|+2)}{p^{2}}\Bigr)\Bigr) \\
&= \frac{k^{2}\rho^{2k-2-\alpha}}{\beta^{2k}} e^{U_{0}(y)}\Bigl(1+\frac{1}{p}\Bigl(V_{0}(y)
-U_{0}(y)-\frac{U_{0}(y)^{2}}{2}\Bigr)
+O\Bigl(\frac{\log^{4}(|y|+2)}{p^{2}}\Bigr)
\Bigr)
\end{align*}
and this proves \eqref{ch3} when $|y|\leq\frac{1}{\beta^{k\theta}}$.

By  Lemma \ref{lem2.1} and Lemma \ref{lem2.2}, one has
\begin{align}\label{frac1}
\Upsilon(x)&=\tau\left(PU_{1}+\frac{V_{1}}
{p}
+\frac{W_{1}}{p^{2}}-\eta PU_{2}-\frac{\eta }{p^{1}}V_{2}
-\frac{\eta}{p^{2}} W_{2}\right)\notag\\
&=\tau\left(U_{\alpha}\left( \frac{x}{\delta} \right)-\eta U_{0}\left( \frac{x^k - \rho^k}{\beta^k} \right)+\frac{V_{\alpha}\left( \frac{x}{\delta} \right)}{p}-\frac{\eta}{p} V_{0}\left( \frac{x^k - \rho^k}{\beta^k} \right)+\frac{W_{\alpha}\left( \frac{x}{\delta} \right)}{p^{2}}- \frac{\eta W_{0}}{p^{2}}\left( \frac{x^k - \rho^k}{\beta^k} \right)
\right)\notag\\
&\quad+\tau\big(p+4k\eta\log\beta-4k\eta\log\rho
+O(1)\big)\notag\\
&=\tau\left(p+\log\frac{\bigl(|x^{k}-\rho^{k}|^{2}+
\beta^{2k}\bigr)^{2\eta}}
{(|x|^{2+\alpha}+\delta^{2+\alpha})^{2}}
+O\Big(\frac{1+o(1)}
{p}\big(V_{\alpha}\left( \frac{x}{\delta} \right)
-\eta V_{0}\left( \frac{x^k - \rho^k}{\beta^k} \right)
\big)
\Big)\right)
\notag
\\
&\quad+\tau\big(\log\delta^{2(2+\alpha)}
-4k\eta\log\rho
+O(1)\big)
\notag\\
&=\tau\left(p+\log\bigl(|x^{k}-\rho^{k}|^{2}+
\beta^{2k}\bigr)^{2\eta}
-2\log(|x|^{2+\alpha}+\delta^{2+\alpha})
+\log\delta^{2(2+\alpha)}-4k\eta\log\rho\right)
+O(\frac{1}{p})
\end{align}
and
\begin{align}\label{frac2}
\Upsilon(x)&=-\eta \tau\left(PU_{2}-\frac{PV_{2} }{p}
-\frac{PW_{2}}{p^{2}} -\frac{PU_{1}}{\eta}-\frac{PV_{1}}
{p\eta}
-\frac{PW_{1}}{p^{2}\eta} \right)\notag\\
&=-\tau\eta\left(U_{0}\left( \frac{x^k - \rho^k}{\beta^k} \right)-\frac{U_{\alpha}\left( \frac{x}{\delta} \right)}{\eta } +
\frac{V_{0}\left( \frac{x^k - \rho^k}{\beta^k} \right)}{p} -\frac{V_{\alpha}\left( \frac{x}{\delta} \right)}{p\eta}
+\frac{W_{0}}{p^{2}}\left( \frac{x^k - \rho^k}{\beta^k} \right)
-\frac{W_{\alpha}\left( \frac{x}{\delta} \right)}{p^{2}\eta}
\right)\notag\\
&\quad-\tau\eta\big(p
+\frac{2(2+\alpha)}{\eta}\log\delta-
\frac{2(2+\alpha)}{\eta}\log\rho
+O(1)\big)\notag\\
&=-\eta\tau\left(p+\frac{2}{\eta}\log(|x|^{2+\alpha}
+\delta^{2+\alpha})
-2\log
\bigl(|x^{k}-\rho^{k}|^{2}+
\beta^{2k}\bigr)
-
\frac{2(2+\alpha)}{\eta}\log\rho
+4k\log\beta
\right)\notag\\
&\quad
+O(\frac{1}{p}).
\end{align}

If \(|x|\le\frac{\rho}{2}\), then \(\frac{\rho^{2k}}{C}\le\bigl|x^{k}-\rho^{k}\bigr|^{2}+\beta^{2k}\le C\rho^{2k}\) for some \(C>0\), therefore, in view of \eqref{1.11} and \eqref{frac1}, one gets
\begin{align*}
\Upsilon(x)= \tau\Bigl(p+\log\frac{\delta^{2(2+\alpha)}}
{(|x|^{2+\alpha}+\delta^{2+\alpha})^{2}}\Bigr)+O\Bigl(\frac{1}{p}\Bigr) ,
\end{align*}
hence
\begin{align*}
p|x|^{\alpha}|\Upsilon(x)|^{p-1}
&= O\Bigl(|x|^{\alpha}p^{p}\tau^{p-1}
\Bigl(1+\frac{1}{p}
\log\frac{\delta^{2(2+\alpha)}}
{(|x|^{2+\alpha}+\delta^{2+\alpha})^{2}}
+O(\frac{1}{p})\Bigr)^{p-1}\Bigr) \\
&= O\Bigl(\frac{|x|^{\alpha}}{\delta^{2+\alpha}}
e^{
\log\frac{\delta^{2(2+\alpha)}}
{(|x|^{2+\alpha}+\alpha^{2+\alpha})^{2}}\left(1-\frac{1}{p}\right)}\Bigr) \\
&= O\bigl(|x|^{\alpha}e^{U_{1}(x)}\bigr).
\end{align*}

On the other hand, if \(\frac{\rho}{2}<|x|\le 2\rho\), then \(\frac{\rho^{2+\alpha}}{C}\le|x|^{2+\alpha}
+\delta^{2+\alpha}\le C\rho^{2+\alpha}\), by \eqref{frac2}, we have
\begin{align*}
\Upsilon(x) &= -p\tau\eta-\tau\eta
\log\frac{\beta^{4k}}
{\bigl(|x^{k}-\rho^{k}|^{2}+
\beta^{2k}\bigr)^{2}}+O\Bigl(\frac{1}{p}\Bigr),
\end{align*}
then we can obtain \(p|x|^{\alpha}|\Upsilon(x)|^{p-1}=O\bigl(|x|^{2k-2}e^{U_{2}(x)}\bigr)\).
If $1\leq|x|$, we can easily get
$$p|x|^{\alpha}|\Upsilon(x)|^{p-1}\leq C|x|^{\alpha}e^{U_{1}(x)}.$$

Finally, if \(1>|x|>2\rho\), then
\begin{align*}
\frac{|x|^{2+
\alpha}}{C}\le|x|^{2+
\alpha}+\delta^{2+
\alpha}\le C|x|^{2+
\alpha},\qquad
\frac{|x|^{2k}}{C}\le\bigl|x^{k}-\rho^{k}\bigr|^{2}+\beta^{2k}\le C|x|^{2k},
\end{align*}
therefore
\begin{align*}
\Upsilon(x)=4\tau(k\eta-1-\frac{\alpha}{2})\log|x|
+O\Bigl(\frac{1}{p}\Bigr).
\end{align*}

 If $e^{-\frac{p-1}{2k+2+\alpha}}\le r\le 1$, the function $r\mapsto r^{2k+2+\alpha}\log^{p-1}\frac{1}{r}$ is decreasing.   Noting that \(\eta\) satisfies \eqref{x.5}, combining with Lemma \ref{lem2.1}, we can deduce $e^{-\frac{p-1}{2k+2+\alpha}}\le C\rho$ for some $C>0$. Then for $ 1>|x|>2\rho$, we can conclude that
\begin{align*}
\log^{p-1}\frac{2}{C|x|}\le \frac{(2\rho)^{2k+2+\alpha}}{|x|^{2k+2+\alpha}}
\Bigl(\log\frac{1}{C\rho}\Bigr)^{p-1}
\Rightarrow \log\frac{1}{|x|}\leq
\left(\frac{2\rho}{|x|}\right
)^{\frac{2k+2+\alpha}{p-1}}
\log\frac{1}{\rho}+O(1).
\end{align*}
From Lemma \ref{lem2.1} and \eqref{x.2}, we can get $\frac{\rho^{\frac{2(2+\alpha)}{\eta}}}{
\beta^{4k}}=O(e^{p})$ and $k-\frac{2+\alpha}{2
\eta}>0$. Then, by \eqref{x.5}, we have
\begin{align*}
p|x|^{\alpha}|\Upsilon(x)|^{p-1}
&= p^{p}\tau^{p-1}|x|^{\alpha}
\Bigl(1+O\Bigl(\frac{1}{p}\Bigr)\Bigr)^{p-1}
\Bigl(\frac{4(k\eta-1-\frac{\alpha}{2})\log\frac{1}{|x|}+O(1)}{p}\Bigr)^{p-1} \\
&= O\Bigl(|x|^{\alpha}\frac{\rho^{2k-2-\alpha}}{\beta^{2k}}
\Bigl(\frac{4\bigl(k-\frac{1}{\eta}-\frac{\alpha}{2\eta}\bigr)
\log\frac{1}{|x|}+O(1)}{p}\Bigr)^{p-1}\Bigr) \\
&= O\Bigl(\frac{\rho^{4k}}{\beta^{2k}}
\Bigl(\frac{4\bigl(k-\frac{1}{\eta}-\frac{\alpha}{2\eta}
\bigr)\log\frac{1}{\rho}+O(1)}{p}\Bigr)^{p-1}
\frac{|x|^{\alpha}}{|x|^{2k+2+\alpha}}\Bigr) \\
&= O\Bigl(\rho^{4\bigl(k-\frac{2+\alpha}{2
\eta}\bigr)}e^{p}
\Bigl(\frac{4\bigl(k-\frac{1}{\eta}-\frac{\alpha}{2\eta}\bigr)\log\frac{1}{\rho}+O(1)}{p}\Bigr)^{p}
\frac{\beta^{2k}|x|^{\alpha}}{|x|^{2k+2+\alpha}}\Bigr) \\
&= O\Bigl(\frac{\beta^{2k}}{|x|^{2k+2}}\Bigr) 
= O\bigl(|x|^{2k-2}e^{U_{2}(x)}\bigr),
\end{align*}where we use the elementary inequality $
|x|\,e^{p}\Bigl(\frac{\log\frac{1}{|x|}}{p}\Bigr)^{p}\le 1,$ for $ |x|\in(0,1).$ This concludes the proof.
\end{proof}
\begin{lemma}\label{lh3} Let $\phi\in\mathbf{Z}^{\perp}$ be a solution to \eqref{1.l1}. Then
\begin{align*}
\|\phi\|_{H_0^1} \leq C \left( \|\phi\|_\infty + \|h\|_* \right).\end{align*}
\end{lemma}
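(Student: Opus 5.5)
Test the equation \eqref{1.l1} against $\phi$ itself and integrate by parts. Since $\phi\in H^1_{0,k}(\Omega)$ vanishes on $\partial\Omega$ and satisfies the orthogonality condition \eqref{0z}, the term $c\int_\Omega |x|^{2k-2}e^{U_2}\widetilde Z\phi$ drops out. This gives
\begin{align*}
\int_\Omega|\nabla\phi|^2 = p\int_\Omega |x|^{\alpha}|\Upsilon|^{p-1}\phi^2-\int_\Omega h\phi .
\end{align*}
The plan is to bound each term on the right by $C(\|\phi\|_\infty^2+\|h\|_*\|\phi\|_\infty)$. The estimate $\|\phi\|_{H^1_0}^2\le C(\|\phi\|_\infty+\|h\|_*)^2$ then follows from $ab\le (a+b)^2$.

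For the first term, use the pointwise bound \eqref{ch4} from Lemma \ref{lh1}:
\begin{align*}
p\int_\Omega |x|^\alpha|\Upsilon|^{p-1}\phi^2\le C\|\phi\|_\infty^2\int_\Omega\bigl(|x|^\alpha e^{U_1}+|x|^{2k-2}e^{U_2}\bigr)\,dx .
\end{align*}
Both weights have bounded mass uniformly in $p$. For the first, the substitution $x=\delta y$ gives $\int_{\mathbb R^2}|y|^\alpha e^{U_\alpha}=4\pi(2+\alpha)$. For the second, the substitution $z=(x^k-\rho^k)/\beta^k$ is a $k$-to-one covering with Jacobian $k^2|x|^{2k-2}/\beta^{2k}$. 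Hence
\begin{align*}
\int_{\mathbb R^2}|x|^{2k-2}e^{U_2}=\int_{\mathbb R^2}\frac{8}{(1+|z|^2)^2}\,dz=8\pi ,
\end{align*}
and $\Omega\subset\mathbb R^2$ only makes these integrals smaller. Alternatively, \eqref{h2} together with \eqref{h3} gives the same bound directly.

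For the second term, \eqref{h3} gives $\int_\Omega|h\phi|\le\|\phi\|_\infty\int_\Omega|h|\le C\|\phi\|_\infty\|h\|_*$.

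No step here is a real obstacle, since all the work is in \eqref{ch4}, which is already established. The only points to check are these. First, the constant in \eqref{ch4} and the integral bound in \eqref{h3} must be uniform in $p$ and in $\eta$ in the range \eqref{x.1}. Second, the integration by parts is legitimate: $\phi\in H^1_0\cap L^\infty$, and $h$, $\Upsilon$ are bounded for each fixed $p$. Third, the $c$-term vanishes exactly, by the second equality in \eqref{0z}, so no estimate on $c$ is needed.
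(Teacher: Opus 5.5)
Your proof is correct and is essentially the paper's argument: test \eqref{1.l1} against $\phi$, drop the $c$-term by the orthogonality condition, bound the potential term with \eqref{ch4}, and bound the $h$-term with \eqref{h3}. One harmless slip: since $x\mapsto (x^k-\rho^k)/\beta^k$ is $k$-to-one, $\int_{\mathbb{R}^2}|x|^{2k-2}e^{U_2}\,dx = 8k\pi$ rather than $8\pi$, which is still a constant independent of $p$, so the conclusion is unaffected.
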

\begin{proof}
Multiplying equation \eqref{1.l1} by \( \phi \) and integrating by parts,  together with \eqref{ch4}, gives that
\begin{align*}
\|\phi\|_{H_0^1}^2 &= \int_{\Omega} p |x|^{\alpha} |\Upsilon|^{p-1} \phi^2 - \int_{\Omega} h \phi\\&
 \leq C \|\phi\|_\infty^2 \int_{\Omega}|x|^{\alpha}  \left( e^{U_1(x)} + |x|^{2k-2} e^{U_2(x)} \right) dx
\\&\quad+
C \|\phi\|_\infty \|h\|_* \int_{\Omega} \left( \displaystyle\frac{\delta^{(2+\alpha)\lambda}
|x|^{\alpha}}{(|x|^{2+\alpha}+\delta^{2+\alpha
})^{1+\lambda}}
+\frac{\beta^{(2+\alpha)k\lambda}|x|^{2k-2}}
{(|x^{k}-\rho^{k}|^{2}+\beta^{2k})^{1+
\frac{2+\alpha}{2}\lambda}}\right) dx
\\&\leq C \left( \|\phi\|_\infty^2 + \|\phi\|_\infty \|h\|_* \right),
\end{align*}
that is
\begin{align*}
\|\phi\|_{H_0^1} \leq C \left( \|\phi\|_\infty + \|h\|_* \right).\end{align*}
\end{proof}
\begin{lemma}\label{lh4} The operator $ \mathcal{L} $ satisfies the maximum principle on the domain
\begin{align}\label{LPf1}
\tilde{\Omega} := \left\{ x \in \Omega : \left| \frac{x}{\delta} \right|^{2+\alpha} > R^{2}, \; \left| \frac{x^k - \rho^k}{\beta^k} \right| > R \right\},
\end{align}for some $R>0$, independent of $p$.
Namely,
\begin{align*}
\begin{cases}
\mathcal{L}\psi \leq 0, & \text{in } \tilde{\Omega}, \\
\psi \geq 0, & \text{on } \partial\tilde{\Omega},
\end{cases}
\quad \Rightarrow \quad \psi \geq 0, \
\text{ in } \ \tilde{\Omega}.
\end{align*}\end{lemma}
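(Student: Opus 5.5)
The plan is the standard one: build a strictly positive supersolution $\Psi$ of $\mathcal{L}$ on $\tilde\Omega$, that is $\mathcal{L}\Psi<0$ in $\tilde\Omega$ and $\Psi>0$ on $\overline{\tilde\Omega}$, and then run the usual quotient argument. Suppose $\psi$ satisfies $\mathcal{L}\psi\le 0$ in $\tilde\Omega$ and $\psi\ge0$ on $\partial\tilde\Omega$, but $\psi$ is negative somewhere. Then $\psi/\Psi$ attains a negative interior minimum. Writing $\psi=\Psi g$, the function $g$ satisfies
\[
\Delta g+2\frac{\nabla\Psi}{\Psi}\cdot\nabla g+\frac{\mathcal{L}\Psi}{\Psi}g=\frac{\mathcal{L}\psi}{\Psi}\le0 .
\]
At the negative minimum of $g$ we have $\Delta g\ge0$, $\nabla g=0$, and $\frac{\mathcal{L}\Psi}{\Psi}g>0$. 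The left-hand side is therefore positive, which is a contradiction.

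For $\Psi$ we take a sum of two radial-type profiles, one attached to the positive peak and one to the negative peaks:
\[
\Psi(x)=\frac{|x/\delta|^{(2+\alpha)\mu}-1}{|x/\delta|^{(2+\alpha)\mu}}\cdot 0+ \Big(\frac{|x|}{\delta}\Big)^{-(2+\alpha)\mu}\ \text{-type terms},
\]
or more concretely
\[
\Psi(x)=2-\Big(\frac{\delta}{|x|}\Big)^{(2+\alpha)\mu}-\Big(\frac{\beta^k}{|x^k-\rho^k|}\Big)^{2\mu},
\]
with a small $\mu\in(0,1)$. On $\tilde\Omega$ one has $|x/\delta|^{2+\alpha}>R^2$ and $|x^k-\rho^k|>R\beta^k$, so $\Psi\ge 2-2R^{-2\mu}\ge1$ for $R$ large; also $\Psi\le2$.

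Next compute the Laplacians. Since $|x|^{-s}$ has $\Delta |x|^{-s}=s^2|x|^{-s-2}$, we get
\[
-\Delta\Big(\frac{\delta}{|x|}\Big)^{(2+\alpha)\mu}=-(2+\alpha)^2\mu^2\frac{\delta^{(2+\alpha)\mu}}{|x|^{(2+\alpha)\mu+2}} .
\]
Because $x\mapsto x^k$ is holomorphic, $\Delta f(x^k-\rho^k)=k^2|x|^{2k-2}(\Delta f)(x^k-\rho^k)$, which gives
\[
-\Delta\Big(\frac{\beta^k}{|x^k-\rho^k|}\Big)^{2\mu}=-4\mu^2k^2|x|^{2k-2}\frac{\beta^{2k\mu}}{|x^k-\rho^k|^{2\mu+2}} .
\]
Hence $\Delta\Psi=-c_1\frac{\delta^{(2+\alpha)\mu}}{|x|^{(2+\alpha)\mu+2}}-c_2|x|^{2k-2}\frac{\beta^{2k\mu}}{|x^k-\rho^k|^{2\mu+2}}$ with $c_1,c_2>0$.

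It remains to control the zero-order term. Lemma~\ref{lh1}, estimate \eqref{ch4}, gives $p|x|^\alpha|\Upsilon|^{p-1}\Psi\le 2C\big(|x|^\alpha e^{U_1}+|x|^{2k-2}e^{U_2}\big)$. In $\tilde\Omega$, where $|x|\gg\delta$ and $|x^k-\rho^k|\gg\beta^k$, these are bounded by $C\frac{|x|^\alpha\delta^{2+\alpha}}{|x|^{4+2\alpha}}$ and $C\frac{|x|^{2k-2}\beta^{2k}}{|x^k-\rho^k|^4}$. Compare the first with $\frac{\delta^{(2+\alpha)\mu}}{|x|^{(2+\alpha)\mu+2}}$: their ratio is $(\delta/|x|)^{(2+\alpha)(1-\mu)}\le R^{-2(1-\mu)}$. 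Likewise, the ratio of the second to its counterpart is $(\beta^k/|x^k-\rho^k|)^{2-2\mu}\le R^{-(2-2\mu)}$.

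Choosing $R$ large, depending only on $\mu,\alpha,k$ and $C$ (all independent of $p$), gives $\mathcal{L}\Psi\le-\frac12(\text{those two terms})<0$ in $\tilde\Omega$.

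The only delicate point is making sure \eqref{ch4} holds uniformly in $\tilde\Omega$ with a $p$-independent constant, including the far region $2\rho<|x|<1$ handled at the end of Lemma~\ref{lh1}. Since that lemma already provides this, the argument closes. If the $p$-uniform bound for $|x|\ge1$ were only of the form $|x|^\alpha e^{U_1}$, it is still dominated as above, so nothing changes.
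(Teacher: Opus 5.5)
Your proof is correct and is essentially the paper's argument: a strictly positive supersolution of the form $1-(\text{terms decaying away from }0\text{ and from the points }x^k=\rho^k)$, with \eqref{ch4} absorbing the potential, followed by a comparison argument. The differences are minor. The paper uses the full exponents, $\zeta=1-2C\delta^{2+\alpha}|x|^{-(2+\alpha)}-2C\beta^{2k}|x^k-\rho^k|^{-2}$, so that $-\Delta\zeta$ already dominates $C(|x|^\alpha e^{U_1}+|x|^{2k-2}e^{U_2})$ pointwise and only $R\ge 2\sqrt C$ is needed; it then concludes via Green's identity on a negative component of $\psi$ rather than your quotient argument (and you should delete your garbled first formula for $\Psi$).
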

\begin{proof}
We argue it by contradiction. Assume there is a \( \psi \) satisfying
\begin{align*}
\begin{cases}
\mathcal{L}\psi \leq 0, & \text{in } \ \tilde{\Omega}, \\
\psi < 0, & \text{in } \ \omega, \\
\psi = 0, & \text{on }\  \partial\omega,
\end{cases}
\end{align*}
for some region \( \omega \subset \tilde{\Omega} \) .
Define
\begin{align*}
\zeta(x) := 1 - 2C \frac{\delta^{2+\alpha}}{|x|^{2+\alpha}} - 2C \frac{\beta^{2k}}{|x^k - \rho^k|^2},
\end{align*}
where $C > 0 $ is as in \eqref{ch4}.

 When \( R \geq 2\sqrt{C} \), $\zeta(x)$ satisfies  \begin{align*}1>
 \zeta > 0, \quad \text{in } \  \tilde{\Omega}(x)\end{align*}and

\begin{align*}
\Delta\zeta(x) &= -2C \left((2+\alpha^{2}) \frac{\delta^{2+\alpha}}{|x|^{4+\alpha}} + \frac{4k^2\beta^{2k}|x|^{2k-2}}{|x^k - \rho^k|^4} \right)
\\&
< -C \left( |x|^{\alpha}e^{U_1(x)} + |x|^{2k-2}e^{U_2(x)} \right)
\\&
< -C \left( |x|^{\alpha}e^{U_1(x)} + |x|^{2k-2}e^{U_2(x)} \right) \zeta(x),
\end{align*}
which, together with \eqref{ch4}, implies that
\begin{align}\label{ff1}
\mathcal{L}\zeta(x) = \Delta\zeta(x) + p|\Upsilon(x)|^{p-1}\zeta(x) < 0.
\end{align}
Then, we can conclude that
\begin{align*}
0 < \int_{\omega} \psi \mathcal{L}\zeta = \int_{\omega} \zeta \mathcal{L}\psi - \int_{\partial\omega} \zeta \partial_n \psi \leq 0,
\end{align*}
which gives a contradiction.
\end{proof}

\begin{lemma}\label{lh5} There exists some \( C > 0 \) such that for any $ h \in L^\infty_k(\Omega) $, if $ \phi \in H^1_{0,k}(\Omega) \cap L^\infty_k(\Omega) $ satisfies
\begin{align}\label{0lh6}
\begin{cases}
\mathcal{L}\phi = h, & \text{in } \ \Omega, \\
\phi = 0, & \text{on } \ \partial\Omega,
\end{cases}
\end{align}
it holds that
\begin{align*}
\|\phi\|_\infty \leq C\bigl(\|\phi\|_{L^\infty(\Omega \setminus \tilde{\Omega})} + \|h\|_*\bigr),
\end{align*}where $\tilde{\Omega}$ is defined in \eqref{LPf1}.
\end{lemma}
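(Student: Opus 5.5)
The plan is the standard barrier argument combined with the maximum principle of Lemma \ref{lh4} on $\tilde{\Omega}$. I will build a positive supersolution $\Psi$ on $\tilde{\Omega}$ satisfying
\[
-\mathcal{L}\Psi\ \ge\ \Bigl(\frac{\delta^{(2+\alpha)\lambda}|x|^{\alpha}}{(|x|^{2+\alpha}+\delta^{2+\alpha})^{1+\lambda}}+\frac{\beta^{(2+\alpha)k\lambda}|x|^{2k-2}}{(|x^k-\rho^k|^2+\beta^{2k})^{1+\frac{2+\alpha}{2}\lambda}}\Bigr)\quad\text{in }\tilde{\Omega},\qquad \Psi\ge0\ \text{on }\partial\tilde{\Omega},
\]
with $\|\Psi\|_\infty\le C$. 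Then I compare $\pm\phi$ with
\[
\|\phi\|_{L^\infty(\Omega\setminus\tilde\Omega)}\,\zeta^{-1}_{\min}\,\zeta+\|h\|_*\Psi ,
\]
where $\zeta$ is the function from the proof of Lemma \ref{lh4}. Since $\frac12\le\zeta\le1$ on $\tilde\Omega$ once $R$ is large and $\mathcal{L}\zeta<0$, the function $\tilde\psi:=\|\phi\|_{L^\infty(\partial\tilde\Omega)}2\zeta+\|h\|_*\Psi\mp\phi$ satisfies $\mathcal{L}\tilde\psi\le0$ in $\tilde\Omega$ and $\tilde\psi\ge0$ on $\partial\tilde\Omega$. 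Here $\partial\tilde\Omega$ consists of $\partial\Omega$, where $\phi=0$, together with the inner circles, which lie in $\overline{\Omega\setminus\tilde\Omega}$. Lemma \ref{lh4} then gives $|\phi|\le C(\|\phi\|_{L^\infty(\Omega\setminus\tilde\Omega)}+\|h\|_*)$ in $\tilde\Omega$, and the bound on $\Omega\setminus\tilde\Omega$ is trivial.

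The barrier will be a sum $\Psi=\Psi_1+\Psi_2$, one radial piece around the positive peak and one piece adapted to the $k$ negative peaks. For $\Psi_1$ I will take the radial solution of
\[
-\Delta\Psi_1=\frac{2\delta^{(2+\alpha)\lambda}|x|^{\alpha}}{(|x|^{2+\alpha}+\delta^{2+\alpha})^{1+\lambda}}
\]
in a large ball $B_{R_0}\supset\Omega$, with $\Psi_1=0$ on $\partial B_{R_0}$. In the variable $s=|x|/\delta$ the right-hand side is $\delta^{-2}s^\alpha(s^{2+\alpha}+1)^{-1-\lambda}$, whose total mass is finite and independent of $\delta$. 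Integrating the radial ODE twice gives $0\le\Psi_1\le C\log\frac{R_0}{|x|+\delta}$, and this is not uniformly bounded. I will fix this in the usual way, as in \cite{LIA}, by choosing a bounded barrier instead: take
\[
\Psi_1=C\Bigl(1-\frac{\delta^{(2+\alpha)\lambda}}{(|x|^{2+\alpha}+\delta^{2+\alpha})^{\lambda}}\Bigr)
\]
up to the harmless additive constant. A direct computation should give $-\Delta\Psi_1\ge c\lambda^2\,\delta^{(2+\alpha)\lambda}|x|^{\alpha}(|x|^{2+\alpha}+\delta^{2+\alpha})^{-1-\lambda}$ away from $|x|\lesssim\delta$, and this region is excluded from $\tilde\Omega$. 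Analogously, $\Psi_2=C\bigl(1-\beta^{(2+\alpha)k\lambda}(|x^k-\rho^k|^2+\beta^{2k})^{-\frac{2+\alpha}{2}\lambda}\bigr)$, using $\Delta f(x^k)=k^2|x|^{2k-2}(\Delta f)(x^k)$. Both are bounded by $C$.

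Next I will check that $-\mathcal{L}\Psi\ge$ weight. Write $\mathcal{L}\Psi=\Delta\Psi+p|x|^\alpha|\Upsilon|^{p-1}\Psi$. By \eqref{ch4} the potential term is at most $C(|x|^\alpha e^{U_1}+|x|^{2k-2}e^{U_2})$. In $\tilde\Omega$ this is $\le CR^{-2\lambda'}$ times the weight, because $e^{U_1}$ decays like $|x/\delta|^{-2(2+\alpha)}$, which beats the weight's decay $|x/\delta|^{-(2+\alpha)(1+\lambda)}$ by a factor $|x/\delta|^{-(2+\alpha)(1-\lambda)}\le R^{-2(1-\lambda)}$; similarly for $U_2$. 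So for $R$ large the positive term is absorbed by half of $-\Delta\Psi$, and this fixes $R$ independent of $p$.

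I expect the main obstacle to be the lower bound on $-\Delta\Psi_i$. The naive computation of $-\Delta$ of $(|x|^{2+\alpha}+\delta^{2+\alpha})^{-\lambda}$ produces a term of sign-indefinite type, namely the $+\lambda(\lambda+1)|\nabla\cdot|^2$ term versus the $-\lambda\Delta$ term. I must verify that for $|x|^{2+\alpha}\gg\delta^{2+\alpha}$ the result is $\approx\lambda(2+\alpha)^2\lambda\,\delta^{(2+\alpha)\lambda}|x|^{-2-(2+\alpha)\lambda}$, with the correct sign and comparable to the weight; this uses $|x|^\alpha/|x|^{(2+\alpha)(1+\lambda)}=|x|^{-2-(2+\alpha)\lambda}$. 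I also need the cross terms to stay bounded, in particular the $\Psi_2$ contributions near the positive peak and vice versa. If the simple power barrier fails near the transition region, I would fall back on solving radial ODEs in the scaled variables, as in \cite{LIA}.
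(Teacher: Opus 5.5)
Your proposal is correct and is essentially the paper's argument: the comparison function $2\|\phi\|_{L^\infty(\Omega\setminus\tilde\Omega)}\zeta+C\|h\|_*(\Psi_1+\Psi_2)$, with the potential term absorbed through \eqref{ch4} by enlarging $R$, followed by the maximum principle of Lemma \ref{lh4}. The only difference is the explicit barrier, and your choice also works: the paper solves $-\Delta\psi_1=\delta^{(2+\alpha)\lambda}|x|^{-2-(2+\alpha)\lambda}$ exactly on the annulus $R^{\frac{2}{2+\alpha}}\delta<|x|<M_1$ with zero boundary data (the $A_1+B_1\log|x|$ term keeps it bounded), whereas for your $C\bigl(1-\delta^{(2+\alpha)\lambda}(|x|^{2+\alpha}+\delta^{2+\alpha})^{-\lambda}\bigr)$ one computes $-\Delta$ of it to be $C\lambda(2+\alpha)^2\delta^{(2+\alpha)\lambda}|x|^{\alpha}(|x|^{2+\alpha}+\delta^{2+\alpha})^{-\lambda-2}\bigl(\lambda|x|^{2+\alpha}-\delta^{2+\alpha}\bigr)$, which is comparable to the weight in $\tilde\Omega$ once $\lambda R^{2}\ge2$ (and likewise for $\Psi_2$ once $\frac{2+\alpha}{2}\lambda R^2\ge 2$), so the sign issue you anticipated does not arise.
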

\begin{proof} 
Let us define the functions \( \psi_i \), with $i = 1, 2 $,
\begin{align*}
\psi_1(x) &= -\frac{\delta^{(2+\alpha)\lambda}}
{(2+\alpha)^2\lambda^2} \frac{1}{|x|^{(2+\alpha)\lambda}} + A_1 + B_1 \log |x|,
\\
\psi_2(x) &= -\frac{\beta^{(2+\alpha)k\lambda}}
{(2+\alpha)^2
\lambda^2k^2 } \frac{1}{|x^k - \rho^k|^{(2+\alpha)\lambda}} + A_2 + B_2 \log |x^k - \rho^k|,
\end{align*}
with
\begin{align*}
A_1 &= \frac{\delta^{(2+\alpha)\lambda}}
{(2+\alpha)^2
\lambda^2 M_1^{(2+\alpha)\lambda}} - B_1 \log M_1,
\, \, \quad \quad \quad B_1
= \frac{\left( \frac{\delta^{(2+\alpha)\lambda}}{M_1^{(2+\alpha)\lambda}} - \frac{1}{R^{2\lambda}} \right)}{(2+\alpha)^2
\lambda^2}  \frac{1}{\log \frac{M_1}{R^{\frac{2}{2+\alpha}}\delta}} < 0,
\\
A_2 &= \frac{\beta^{(2+\alpha)k\lambda}}{(2+\alpha)^2
\lambda^2k^2 M_2^{(2+\alpha)\lambda}} - B_2 \log (M_2),
\, \,\quad B_2
= \frac{\left( \frac{\beta^{(2+\alpha)k\lambda}}
{M_2^{(2+\alpha)\lambda}} - \frac{1}{R^{(2+\alpha)\lambda}} \right) }{(2+\alpha)^2
\lambda^2k^2} \frac{1}{\log \frac{M_2}{R\beta^k}} < 0,
\end{align*}
where $\lambda \in (0,1)$ is as in Lemma \ref{lem2.5} and $M_1 := 2 \operatorname{diam}(\Omega)$, $M_2 := 2 \operatorname{diam}(\Omega^k)$. They satisfy
\begin{align}\label{D1}
\begin{cases}
-\Delta\psi_1(x) = \frac{|x|^{\alpha}\delta^{(2+\alpha)\lambda}}{
|x|^{(2+\alpha)(1+\lambda)}}, & R^{\frac{2}{2+\alpha}}\delta < |x| < M_1 \\
\psi_1(x) = 0, & |x| = R^{\frac{2}{2+\alpha}}\delta, \; |x| = M_1
\end{cases}
\end{align}and
\begin{align}\label{D2}
\begin{cases}
-\Delta \psi_2(x) = \frac{\beta^{(2+\alpha)k\lambda} |x|^{2k-2}}{|x^k - \rho^k|^{2+(2+\alpha)\lambda}}, & R \beta^k < |x^k - \rho^k| < M_2, \\
\psi_2(x) = 0, & |x^k - \rho^k| = R \beta^k, \; |x^k - \rho^k| = M_2.
\end{cases}
\end{align}
By the maximum principle for $-\Delta$, both functions are positive and satisfy
\begin{align*}
\psi_1(x) \leq A_1 + B_1 \log(R^{\frac{2}{2+\alpha}}\delta) = \frac{\delta^{(2+\alpha)\lambda}}
{(2+\alpha)^2
\lambda^2 M_1^{(2+\alpha)\lambda}}  - B_1 \log \frac{M_1}{R^{\frac{2}{2+\alpha}}\delta} = \frac{1}{(2+\alpha)^2
\lambda^2  R^{2\lambda}}
\end{align*}
and
\begin{align*}
\psi_2(x) \leq A_2 + B_2 \log(R\beta^k) = \frac{\beta^{(2+\alpha)k\lambda}}{(2+\alpha)^2
\lambda^2k^2 M_2^{(2+\alpha)\lambda}} - B_2 \log \frac{(M_2)}{R\beta^k}= \frac{1}{(2+\alpha)^2
\lambda^2k^2R^{(2+\alpha)\lambda}},
\end{align*}
so that they are uniformly bounded. In view of the definitions of $M_1$ and $M_2$, we have
\begin{align*}
|x| < M_1, \quad |x^k - \rho^k| < M_2, \quad \forall x \in \tilde{\Omega},
\end{align*}
which shows that $\psi_1, \psi_2$ respectively satisfy \eqref{D1}, \eqref{D2} in $\tilde{\Omega}$.

Now, we take $h, \phi$ solving \eqref{0lh6} and define
\begin{align}\label{hhh2}
\tilde{\phi} := 2\|\phi\|_{L^\infty(\Omega \setminus \tilde{\Omega})} \zeta + 2\|h\|_* (\psi_1 + \psi_2),
\end{align}
where
$\zeta$ is the barrier function defined in the proof of Lemma \ref{lh4}. 
We can choose a larger value of $R$ in the definition of $\tilde{\Omega}$, namely we let 
$R^\lambda  \geq \max\{1,\frac{2k
\sqrt{2C}}{\lambda}\}$ 
, with $C$ as in \eqref{ch4}, so that we can derive $\zeta \geq \frac{1}{2}$ on $\tilde{\Omega}$. Therefore if either $|x| = R^{\frac{2}{2+\alpha}}\lambda$ or $|x^k - \rho^k| = R\beta^k$,
we have
\[
\tilde{\phi}(x) \geq 2\|\phi\|_{L^\infty(\Omega \setminus \tilde{\Omega})} \zeta(x) \geq \|\phi\|_{L^\infty(\Omega \setminus \tilde{\Omega})} \geq |\phi(x)|.
\]
Noting that $\zeta, \psi_1, \psi_2$ are all positive, we can get $\tilde{\phi} \geq 0 = \phi$ on $\partial\Omega$. 
By \eqref{ff1} 
and \eqref{hhh2},  we have
\begin{align*}
&\mathcal{L}\tilde{\phi}(x) =2\|\phi\|_{L^\infty(\Omega \setminus \tilde{\Omega})} \mathcal{L}\zeta+2\|h\|_* (\mathcal{L}\psi_1 + \mathcal{L}\psi_2)\\&\leq 2\|h\|_* (\mathcal{L}\psi_1 + \mathcal{L}\psi_2)\\
&=2\|h\|_* \left(- \displaystyle\frac{\delta^{(2+\alpha)\lambda}
|x|^{\alpha}}{|x|^{(2+\alpha)(1+\lambda)}}
-\frac{\beta^{(2+\alpha)k\lambda}|x|^{2k-2}}
{|x^{k}-\rho^{k}|^{2+(2+\alpha)\lambda}}+ p|x|^{\alpha}|\Upsilon|^{p-1}(x)(\psi_1(x) + \psi_2(x))\right)\\
&\leq  -2\|h\|_* \left( \displaystyle\frac{\delta^{(2+\alpha)\lambda}
|x|^{\alpha}}{(|x|^{2+\alpha}+\delta^{2+\alpha
})^{1+\lambda}}
+\frac{\beta^{(2+\alpha)k\lambda}|x|^{2k-2}}
{(|x^{k}-\rho^{k}|^{2}+\beta^{2k})^{1+
\frac{2+\alpha}{2}\lambda}}\right)\\&\quad+
2C\|h\|_*\left(\frac{2(2+\alpha)^{2}|x|^{\alpha}
\delta^{2+\alpha}}{(|x|^{2+\alpha} + \delta^{2+\alpha})^{2}}+\frac{8k^2|x|^{2k-2}
\beta^{2k}}{(|x^k - \rho^k|^{2} + \beta^{2k})^2}\right)\left(\frac{1}{(2+\alpha)^2
\lambda^2  R^{2\lambda}}+
\frac{1}{(2+\alpha)^2
\lambda^2k^2R^{(2+\alpha)\lambda}}
\right)\\
&\leq 
-2\|h\|_* \left(1-\frac{4Ck^2}{\lambda^2R^{2\lambda}}
\right)\left( \displaystyle\frac{\delta^{(2+\alpha)\lambda}
|x|^{\alpha}}{(|x|^{2+\alpha}+\delta^{2+\alpha
})^{1+\lambda}}
+\frac{\beta^{(2+\alpha)k\lambda}|x|^{2k-2}}
{(|x^{k}-\rho^{k}|^{2}+\beta^{2k})^{1+
\frac{2+\alpha}{2}\lambda}}
\right)\\
&
\leq -|h(x)|=-|\mathcal{L}\phi(x)|
\end{align*}
by the choice of $R$.
Then by
Lemma \ref{lh4}, we have $ |\phi(x)| \leq \tilde{\phi}(x) \) for all \( x \in \tilde{\Omega}$, which, together with \eqref{hhh2}, gives that
\begin{align*}
\|\phi\|_{\infty} \leq \| \tilde{\phi}\|_{\infty} \leq 2\|\phi\|_{L^{\infty}(\Omega \setminus \tilde{\Omega})} \|\zeta\|_{\infty} + 2\|h\|_* (\|\psi_1\|_{\infty} + \|\psi_2\|_{\infty})
\leq 2\|\phi\|_{L^{\infty}(\Omega \setminus \tilde{\Omega})} + \frac{
1}{\lambda^2 R^{2\lambda}} \|h\|_*.
\end{align*}\end{proof}
Here we give a uniform estimate of the solution $\phi$ of the linear problem \eqref{1.l1} with $c=0$.

\begin{lemma}\label{hD2}
There exists some $C>0$ such that, for any $h\in L^{\infty}_{k}(\Omega)$, if $\phi\in H^{1}_{0,k}(\Omega)\cap L^{\infty}_{k}(\Omega)$ satisfies
\begin{equation}\label{h1.0}
\begin{cases}
\mathcal{L}\phi=h, &\text{in }\ \Omega,\\[4pt]
\phi=0, &\text{on }\ \partial\Omega,\\[4pt]
\displaystyle\int_{\Omega}|x|^{2k-2}
e^{U_{2}(x)}\widetilde{Z}(x)\phi(x)\,
\mathrm{d}x=0,
\end{cases}\end{equation}
then
$$
\|\phi\|_{\infty}\leq Cp\|h\|_{*},
$$
where \(\widetilde{Z}\) is defined in \eqref{00z}, \(\|\cdot\|_{*}\) is the weighted norm defined in \eqref{h1}.
\end{lemma}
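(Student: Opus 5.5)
The plan is the standard blow-up/contradiction argument of \cite{LIA,PT}. Suppose the claim fails. Then there are $p_n\to\infty$, $h_n\in L^\infty_k(\Omega)$ and solutions $\phi_n$ of \eqref{h1.0} with $\|\phi_n\|_\infty=1$ and $p_n\|h_n\|_*\to 0$. Lemma \ref{lh5} gives $\|\phi_n\|_{L^\infty(\Omega\setminus\tilde\Omega)}\ge c>0$, so $\phi_n$ cannot vanish simultaneously on both concentration regions. These are the ball $|x|\le R^{2/(2+\alpha)}\delta$ and the sets $|x^k-\rho^k|\le R\beta^k$. By $k$-symmetry the latter reduce to the single neighbourhood of the peak $\rho$ on the positive real axis.

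\textbf{Step 1: local limits.} Set $\hat\phi_n(y):=\phi_n(\delta y)$ and $\check\phi_n(y):=\phi_n(\sqrt[k]{\beta^k y+\rho^k})$. By \eqref{ch2} and \eqref{ch3}, the rescaled equations read
\[
\Delta\hat\phi_n+|y|^\alpha e^{U_\alpha}(1+o(1))\hat\phi_n=\delta^2 h_n(\delta y),\qquad \Delta\check\phi_n+e^{U_0}(1+o(1))\check\phi_n=\text{(rescaled }h_n).
\]
Here the Jacobian of $y\mapsto\sqrt[k]{\beta^ky+\rho^k}$ cancels the factor $k^2|x|^{2k-2}\beta^{-2k}$. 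The weighted norm guarantees that both right-hand sides are $O(\|h_n\|_*)$ in $L^\infty_{loc}$. Elliptic estimates give $\hat\phi_n\to\hat\phi$ and $\check\phi_n\to\check\phi$ in $C^1_{loc}$, with bounded limits solving \eqref{j.0} and the standard linearized Liouville equation respectively. The $k$-symmetry and the reflection symmetry of $\phi_n$ pass to the limit. For $\hat\phi$ they rule out $V_{\alpha,1},V_{\alpha,2}$ unless $k\mid(m+1)$, which fails because $k>3(2+\alpha)/2>m+1$; hence $\hat\phi=a V_{\alpha,0}$. For $\check\phi$, evenness in $y_2$ kills $Z_2$, and the orthogonality condition in \eqref{h1.0}, which passes to the limit by dominated convergence via \eqref{z0}, kills $Z_1$; hence $\check\phi=bZ_0$.

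\textbf{Step 2: $a=b=0$, the main obstacle.} This is where the factor $p$ enters. I test the equation against suitable functions built from the radial kernels: $P$ of $V_{\alpha,0,\delta}$ near $0$ and $P$ of $Z_{0,2}$ near the negative peaks. A better choice may be the scaling generators $\delta\partial_\delta PU_1$ and $\beta\partial_\beta PU_2$, as in \cite{LIA}. Integration by parts gives two relations. The first-order terms cancel because these functions lie in the kernels. What remains are the $1/p$ corrections coming from the term $\frac1p(V-U-U^2/2)$ in \eqref{ch2}–\eqref{ch3}, the interaction of each test function with the other bubble, and the projection errors. The error terms carry a factor $\int|h_n|\le C\|h_n\|_*=o(1/p)$. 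Multiplying by $p$, I expect a $2\times2$ linear system in $(a,b)$ with a nonvanishing determinant as $p\to\infty$, computed from the explicit constants (this uses $\eta\approx\eta_{\infty,k}$ and the conditions of Lemma \ref{lemd.1}). The delicate point is controlling the cross terms, where the test function of one bubble is integrated against the linearized weight of the other. This requires the separation estimates \eqref{x.7}–\eqref{x.11}. It also requires the behaviour of $\phi_n$ in the intermediate annuli, which I would obtain from the Green representation $\phi_n(x)=\int G(x,z)(p|z|^\alpha|\Upsilon|^{p-1}\phi_n-h_n)\,dz$. Via \eqref{ch4}, this representation shows that $\phi_n$ is approximately a combination of $\log$-type profiles whose constants are tied to $a$ and $b$.

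\textbf{Step 3: conclusion.} With $a=b=0$, $\hat\phi_n\to0$ and $\check\phi_n\to0$ uniformly on the compact sets $\{|y|\le R^{2/(2+\alpha)}\}$ and $\{|y|\le R\}$. Therefore $\|\phi_n\|_{L^\infty(\Omega\setminus\tilde\Omega)}\to0$, which contradicts the bound from Lemma \ref{lh5} together with $\|\phi_n\|_\infty=1$.
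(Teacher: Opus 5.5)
Your Steps 1 and 3 coincide with the paper's argument. That is: blow-up at both scales, classification of the limits through the $k$-symmetry, the reflection and the orthogonality condition, and the final contradiction with Lemma \ref{lh5}.

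The gap is in Step 2, which carries the whole content of the lemma: testing $\mathcal{L}\phi_n=h_n$ against kernel elements does not give a system for $(a,b)$. Your two choices coincide, since $\delta\partial_\delta PU_1=(2+\alpha)PV_{\alpha,0,\delta}$ and $\beta\partial_\beta PU_2=2k\,PZ_{0,2}$.

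\begin{itemize}
\item Because $V_{\alpha,0,\delta}=1+O(\delta^{2+\alpha})$ on $\partial\Omega$, one has $PV_{\alpha,0,\delta}=V_{\alpha,0,\delta}-1+O(\delta^{2+\alpha})$.
\item The constant $-1$ is not a negligible projection error. It produces $-m_1$, where $m_1:=\int_{\{|x|\le\delta^{1-\theta}\}}p|x|^{\alpha}|\Upsilon|^{p-1}\phi_n$.
\item Using \eqref{ch2} and multiplying by $p$, the identity reduces to $p\,m_1=a\kappa_1+o(1)$, with $\kappa_1=\int_{\R^2}|y|^{\alpha}e^{U_\alpha}\bigl(V_\alpha-U_\alpha-\frac{U_\alpha^2}{2}\bigr)V_{\alpha,0}^2\,dy$.
\item Likewise $p\,m_2=b\kappa_2+o(1)$ for the mass $m_2$ of one negative peak.
\end{itemize}
Since $\int_{\R^2}|y|^\alpha e^{U_\alpha}V_{\alpha,0}=\int_{\R^2}e^{U_0}Z_0=0$, the masses tend to $0$ at an unknown rate. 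So these relations only evaluate the new unknowns $p\,m_1$ and $p\,m_2$. They are flux identities, not equations for $a$ and $b$.

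The actual equations come from the fact that $\log\frac1\delta$, $\log\frac1\rho$ and $\log\frac1{\beta^k}$ are of order $p$, so masses of order $1/p$ generate potentials of order one. Compare $\phi_n\approx a$ at the outer edge of the inner region at $0$, and $\phi_n\approx b$ near a negative peak, with the Green representation of $\phi_n$ there. This gives
$a=\frac{m_1}{2\pi}\log\frac1\delta+\frac{k\,m_2}{2\pi}\log\frac1\rho+o(1)$ and $b=\frac{m_2}{2\pi}\log\frac1{\beta^k}+\frac{m_1}{2\pi}\log\frac1\rho+o(1)$.
Only together with the flux identities does this close into a $2\times 2$ system, whose nondegeneracy must then be checked. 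You use the Green representation only to control cross terms, but it is in fact the source of the equations.

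The paper packs the first relation into a single integration by parts with a non-kernel test function $PQ_1$. Here $Q_1=S(\frac{x}{\delta})+\frac{2(2+\alpha)}{3}\log\delta\,V_{\alpha,0,\delta}+\frac{4(2+\alpha)\pi}{3}H(0,0)\,T(\frac{x}{\delta})$, with $\Delta S+|y|^\alpha e^{U_\alpha}S=|y|^\alpha e^{U_\alpha}V_{\alpha,0}$. With this choice the leading term $\int_\Omega|x|^\alpha e^{U_1}V_{\alpha,0,\delta}\phi_n\to\frac{4(2+\alpha)\pi}{3}a$ is of order one. The coefficient $\log\delta\sim p$ turns the $1/p$ correction in \eqref{ch2} into an order-one multiple of $a$.

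Your instinct that the two peaks are coupled is right. Near the negative peaks $PQ_1$ is of order $\log\frac1\rho\sim p$, so the paper's cross term $M_2$ is comparable to $p\,m_2$, which by the flux identity tends to a multiple of $b$. The paper removes it by assuming $\sup|\phi_{n,2}-c_0Z_0|\le p_n^{-2}$, which passing to a subsequence cannot provide. So the correct outcome is indeed a coupled system, but your proposal does not yet contain the relations that produce it.
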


\begin{proof}
We argue it by contradiction and we assume there exists sequences \(\phi_{n},h_{n}\) satisfying \eqref{h1.0} for $p_{n}\to+\infty$ such that  $\|\phi_{n}\|_{\infty}=1$ and $p_{n}\|h_{n}\|_{*}\to 0$.

Denoting by $
\phi_{n,1}(y):=\phi_{n}(\delta y)$ and $
\phi_{n,2}(y):=\phi_{n}\left(\sqrt[k]
{\beta^{k}y+\rho^{k}}\right)$, from  \eqref{h1.0}, we have
\begin{align*}
\Delta\phi_{n,1}(y)+\delta^{2+\alpha}p_n|y|^{
\alpha}|\Upsilon(\delta y)|^{p_n-1}\phi_{n,1}(y)=\delta^{2}h(\delta y),
\quad \text{in }\ \tilde{\Omega}_{n,1},
\end{align*}
where $\tilde{\Omega}_{n,1}:=\left\{y\in\frac{\Omega}{\delta}:|y|<\frac{1}
{\delta^{\theta}}\right\}$, and
\begin{align*}
\Delta\phi_{n,2}+\frac{\beta^{2k}}{k^{2}}
\big|\beta^{k}y+\rho^{k}\big|^{\frac{\alpha}{k}+\frac{2}{k}-2}
p_n\Bigl|\Upsilon\Bigl(\sqrt[k]
{\beta^{k}y+\rho^{k}}\Bigr)\Bigr|^{p_n-1}\phi_{n,2}(y)
=\frac{\beta^{2k}}{k^{2}}
\frac{h\left(\sqrt[k]{\beta^{k}y+\rho^{k}}\right)
}{\big(\beta^{k}y+\rho^{k}\big)^{2-
\frac{2}{k}}},
\ \text{in }\ \tilde{\Omega}_{n,2},
\end{align*}
where $\tilde{\Omega}_{n,2}:=\left\{y\in\frac{\Omega^{k}-\rho^{k}}{\beta^{k}}
:|y|<\frac{1}{\beta^{k\theta}}\right\}$. Observe that $\tilde{\Omega}_{n,1}$ and $\tilde{\Omega}_{n,2}$ converge to $\R^2$ as $n\to \infty$.
With the aid of estimate \eqref{ch2} in $\tilde{\Omega}_{n,1}$, estimate \eqref{ch3}  in $\tilde{\Omega}_{n,2}$, the fact that $\|\phi_{n,1}\|\leq1$, $\|\phi_{n,2}\|\leq1$ and elliptic estimates, we have that $\phi_{n,1}$ and $\phi_{n,2}$ converge locally uniformly, as $n\to \infty$, to bounded solutions $\phi_{\infty,1}$, $\phi_{\infty,2}$ to
\begin{equation*}
\Delta\phi_{\infty,1}+|y|^{
\alpha}e^{U_{\alpha}}\phi_{\infty,1}=
0,\qquad\quad\ \Delta\phi_{\infty,2}+e^{U_{0}}\phi_{\infty,2}=0,\qquad\quad\text{in }\ \mathbb{R}^{2},
\end{equation*}
respectively. Now, from \eqref{j.01}, we have that if $\alpha=2m$ for some $m\geq1$, then $\phi_{\infty,1}=a_{0}V_{\alpha,0}+a_{1}V_{\alpha,1}+a_{2}V_{\alpha,2}$, while if $\alpha\neq2m$, then $\phi_{\infty,1}=a_{0}V_{\alpha,0}$, for some $a_0,a_1,a_2\in \R$, where $V_{\alpha,i}$,  for  $i=0,1,2$, are as defined in \eqref{j.01}.  Recalling that $\phi_{n}\in L^{\infty}_{k}(\Omega)$, we have that  \begin{align*}\phi_{n,1}(x_1, -x_2)=\phi_{n,1}(x_1,x_2)\ \ \text{and} \ \ \phi_{n,1}\big(e^{i\frac{2\pi}{k}} x
\big)=\phi_{n,1}(x) \end{align*} for every $x\in\Omega$ and the same holds for the function  $\phi_{\infty,1}$ for every $y\in \R^2$.  This implies $a_{2}=0$ and, combining with \eqref{k.1} also $a_{1}=0$. Then for every $\alpha>0$,
$$\phi_{\infty,1}=a_{0}V_{\alpha,0}.$$
By \eqref{z.06-ter}, \eqref{z.6} and \eqref{pz.06}, we have that
 $\phi_{\infty,2}=c_{0}Z_{0}+ c_{1}Z_{1}+c_{2}Z_{2}$. Recalling the definition of $U_2(x)$,  we can pass to the limit in the third equation of \eqref{h1.0}, by the dominated convergence theorem,  getting
\begin{align*}
\int_{\mathbb{R}^{2}}e^{U_0(y)}Z_{1}(y)
\phi_{\infty,2}(y)\,\mathrm{d}y=0,
\end{align*}
 which implies $c_{1}=0$. Using that
 $$\phi_{n,2}(y_1, -y_2)=\phi_{n}\left(\sqrt[k]
{\beta^{k}(y_1, -y_2)+\rho^{k}}\right)=
\phi_{n}\left(\sqrt[k]
{\beta^{k}(y_1, y_2)+\rho^{k}}\right)
 =\phi_{n,2}(y_1, y_2),$$
 then also $c_{2}=0$ and $\phi_{\infty,2}=c_{0}Z_{0}.$
Thus, we can conclude that
\begin{align*}
\phi_{n,1}\to a_{0}V_{\alpha,0},\qquad
\phi_{n,2}\to c_{0}Z_{0},\qquad
\text{in }\ L^{\infty}_{\text{loc}}\left(\mathbb{R}^{2}\right),
\end{align*}
for some $a_{0},c_{0}\in\mathbb{R}$. If $a_{0}=c_{0}=0$,  then both $\phi_{n,1}$ and $\phi_{n,2}$ converge to $0$ in $L^{\infty}_{\text{loc}}\left(\mathbb{R}^{2}
\right)$. From Lemma \ref{lh5} and the definition of $\tilde{\Omega}$, it follows that
\begin{align*}
1 &= \|\phi_{n}\|_{\infty} \\
 &\le C\bigl(\|\phi_{n}\|_{L^{\infty}(\Omega\backslash\widetilde{\Omega})}
        +\|h_{n}\|_{*}\bigr) \\
 &=  C\Bigg(\max\Bigl\{\sup_{|x|\leq R^{\frac{2} {2+\alpha}}}|\phi_{n,1}(x)|,\;
        \sup_{|x|\leq R}|\phi_{n,2}(x)|\Bigr\}+o(1)\Bigg) \to 0,
\end{align*}
which gives a contradiction. So to conclude the proof it suffices to show that $a_{0}=c_{0}=0$.
Let us we define the function
\begin{align*}
S(x)&:=\frac{2}{3}\frac{|x|^{2+\alpha}-1}{|x|^{2+\alpha}+1}
       \log\left(|x|^{2+\alpha}+1\right)-
       \frac{4}{3}\frac{1}{|x|^{2+\alpha}
       +1}, \ \ \
T(x):=\frac{2}{|x|^{2+\alpha}+1},
\end{align*}which solves
\begin{align*}
\Delta S + |x|^{\alpha}e^{U_{\alpha} } S = |x|^{\alpha}e^{U_{\alpha} }V_{\alpha,0} \quad \text{in } \mathbb{R}^2, \quad \Delta T +  |x|^{\alpha}e^{U_{\alpha} } T =  |x|^{\alpha}e^{U_{\alpha} }, \quad \text{in } \mathbb{R}^2.
\end{align*}
Next, we consider the function
\begin{align*}
Q_{1}(x): = S\left(\frac{x}{\delta}\right)
          +\Bigl(\frac{2(2+\alpha)}{3}
         \log\delta \Bigr)V_{\alpha,0,\delta}(x)
          +\frac{4(2+\alpha)\pi}{3}H(0,0)\,
          T\left(\frac{x}{\delta}\right),
\end{align*}
where $V_{\alpha,0,\delta}(x)$ is defined in \eqref{z.06-bis}.
Due to \begin{align*}
S(x) = \frac{2(2+\alpha)}{3} \log |x| + O\left(\frac{1}{|x|}\right), \quad T(x) = O\left(\frac{1}{|x|^{2+\alpha}}\right), \ \ \text{as} \ \
 |x|\rightarrow\infty,
\end{align*}
we have, uniformly for $ x \in \partial\Omega$,
\begin{align*}
PQ_1(x) - Q_1(x) &= -\frac{2(2+\alpha)}{3} \log |x| + O(\delta)
\end{align*}
and, reasoning as in the proof of Lemma \ref{lem2.2}, we can obtain, for $ x \in \Omega$,
\begin{align}\label{pq21}
PQ_1(x) = Q_1(x) - \frac{4(2+\alpha)}{3}\pi H(0,0) + O(|x| + \delta)
\end{align}
and, for $|y| \leq \frac{1}{2}\frac{\rho^k}{\beta^k}$,
\begin{align*}
PQ_1\left((\beta^k y + \rho^k)^{1/k}\right) = \frac{2(2+\alpha)}{3}k \log \rho - \frac{4(2+\alpha)}{3}\pi H(0, 0) + O\left(\frac{\beta^k}{\rho^k}|y| + \frac{\delta^{2+\alpha}}{\rho^{2+\alpha}} + \rho\right).
\end{align*}The function
$PQ_1$ solves
\begin{align}\label{PQ1}
\Delta PQ_1 + p_n |x|^{\alpha}|\Upsilon|^{p_n-1}PQ_1 =  |x|^{\alpha} e^{U_1}V_{\alpha,0,\delta}+ (p_n|\Upsilon|^{p_n-1} - e^{U_1}) |x|^{\alpha}PQ_1 + R_1,
\end{align}
with Dirichlet boundary conditions, where
$$
R_1(x) := \left( PQ_1(x) - Q_1(x) + \frac{4(2+\alpha)}{3} \pi H(0, 0) \right) |x|^{\alpha}e^{U_1(x)}.
$$
By \eqref{pq21}, we can derive
\begin{align}\label{R1}
R_1(x) = O\big(|x|^{\alpha}e^{U_1(x)}(|x| + \delta)\big), \ \ \text{for} \ \ x\in\Omega. \end{align}
Multiplying \eqref{PQ1} by $ \phi_n $ and integrating by parts, we can get
\begin{align}\label{fgyp0}
\int_{\Omega} PQ_1 h_n \, dx= \int_{\Omega} |x|^{\alpha} e^{U_1}V_{\alpha,0,\delta} \phi_n\, dx + \int_{\Omega} |x|^{\alpha} (p_n|\Upsilon|^{p_n-1} - e^{U_1}) PQ_1 \phi_n \, dx+ \int_{\Omega} R_1 \phi_n\, dx.
\end{align}
We estimate each term. The assumption $ \|h_n\|_* = o\left(\frac{1}{p_n}\right) $, \eqref{pq21}
and the properties of $ Q_1 $ yield
\begin{align}\label{fgyp1}
\int_{\Omega} &PQ_1 h_n \, dx= O\left(\|h_n\|_* \int_{\Omega} \Big( \displaystyle\frac{\delta^{(2+\alpha)\lambda}
|x|^{\alpha}}{(|x|^{2+\alpha}+\delta^{2+\alpha
})^{1+\lambda}}
+\frac{\beta^{(2+\alpha)k\lambda}|x|^{2k-2}}
{(|x^{k}-\rho^{k}|^{2}+\beta^{2k})^{1+
\frac{2+\alpha}{2}\lambda}}\Big)|PQ_1(x)| \, dx\right)
\notag\\&
= o\left(\frac{1}{p_n} \int_{\Omega} \Big( \displaystyle\frac{\delta^{(2+\alpha)\lambda}
|x|^{\alpha}}{(|x|^{2+\alpha}+\delta^{2+\alpha
})^{1+\lambda}}
+\frac{\beta^{(2+\alpha)k\lambda}|x|^{2k-2}}
{(|x^{k}-\rho^{k}|^{2}+\beta^{2k})^{1+
\frac{2+\alpha}{2}\lambda}}\Big) (|Q_1(x)| + 1) \, dx\right)\notag\\&
= o\left(\frac{1}{p_n}
\int_{\Omega} \Big( \displaystyle\frac{\delta^{(2+\alpha)\lambda}
|x|^{\alpha}}{(|x|^{2+\alpha}+\delta^{2+\alpha
})^{1+\lambda}}
+\frac{\beta^{(2+\alpha)k\lambda}|x|^{2k-2}}
{(|x^{k}-\rho^{k}|^{2}+\beta^{2k})^{1+
\frac{2+\alpha}{2}\lambda}}\Big)
 \left(\log\frac{1}{\delta} + \log\left(\frac{|x|^{2+\alpha}}
 {\delta^{2+\alpha}} + 1\right)\right) dx\right)\notag\\
&=o\left(\frac{1}{p_n}
\int_{\mathbb{R}^2}\displaystyle\frac{
|y|^{\alpha}}{(|y|^{2+\alpha}+1)^{1+\lambda}}
\left(\log\frac{1}{\delta} + \log\left(|y|^{2+\alpha}+1\right) \right)\, dy
\right)\notag \\&
\quad+o\left(\frac{1}{p_n}
\int_{\mathbb{R}^2}\displaystyle\frac{
1}{(|y|^{2}+1)^{1+\frac{2+\alpha}{2}\lambda}}
\left(\log\frac{1}{\delta} + \log\left(\frac{|\beta^{k}y+\rho^{k}|^{\frac{2+\alpha}{k
}}}{\delta^{2+\alpha}
}+1\right)\, dy \right)
\right) \notag\\&= o(1),
\end{align}
where we used that $\frac 1 p_n\log \frac 1\delta\leq C$ by Lemma \ref{lem2.1}. 
By the dominated convergence theorem,  using that $\phi_{n,1}\to a_{0}V_{\alpha,0}$, we have
\begin{align}\label{fgyp2}
\int_{\Omega}|x|^{\alpha} e^{U_1} V_{\alpha,0,\delta} \phi_n\, dx \to a_{0} \int_{\mathbb{R}^2}|y|^{\alpha} e^{U_{\alpha}} V_{\alpha,0}^{ 2}\, dx= \frac{4(2+\alpha)}{3} \pi a_{0}.
\end{align}
Next, by Lemma \ref{lh1}, we can get
\begin{align*}
&\quad\int_{\Omega} |x|^{\alpha} (p_n|\Upsilon|^{p_n-1} - e^{U_1}) PQ_1 \phi_n\, dx
\\
&=  \int_{\{x\in \Omega : |x| < \delta^{1-\theta}\}} |x|^{\alpha} (p_n |\Upsilon|^{p_n-1} - e^{U_1}) PQ_1 \phi_n\, dx + \int_{\{x\in \Omega : |x^{k}-\rho^{k}|\leq\beta^{k(1-\theta)} \} } p_n |x|^\alpha  |\Upsilon|^{p_n-1} PQ_1 \phi_n\, dx\\
&
\quad+ 
O \left(  \int_{\{x\in\Omega :  
|x| \geq
\delta^{1-\theta}\}}
|x|^{\alpha}e^{U_1(x)} dx  \|PQ_1\|_{\infty} \|\phi_n\|_{L^\infty}\right)\\&
\quad+ 
O \left( \int_{\big\{x\in  \Omega : |x| \geq \delta^{1-\theta}\}\big\}\backslash\{|x^{k}
-\rho^{k}|\leq\beta^{k(1-\theta)}\}} p_n  |x|^{\alpha} |\Upsilon|^{p_n-1}  dx \|PQ_1\|_{\infty} \|\phi_n\|_{L^\infty}\right)\\
&=M_{1}
+M_{2}+M_{3}+M_{4}.
\end{align*}
By \eqref{ch2}, we can get 
\begin{align*}
 &M_{1}=
\int_{\{y\in \R^2 : |y| < \delta^{-\theta}\}} e^{U_{\alpha}(y)} \frac{|y|^{\alpha}}{p_n} \left( V_{\alpha}(y) - U_{\alpha}(y) - \frac{U_{\alpha}(y)^2}{2} + O \left( \frac{\log^4(|y|^{2+\alpha}+2)}{p_n} \right) \right)\\&\quad\times\left( \left( \frac{2(2+\alpha)}{3} \log \delta \right) V_{\alpha,0}+ \frac{2}{3}\frac{|x|^{2+\alpha}-1}{|x|^{2+\alpha}+1}
       \log\left(|x|^{2+\alpha}+1\right)+O(1) \right) \phi_{n,1}(y) \, dy
 \\&
=\Big(\frac{4k\eta^2\log \eta}{3(2k\eta-2-\alpha)(\eta+1)} - \frac{1}{3} \Big)\int_{\{y: |y| < \delta^{-\theta}\}} |y|^{\alpha}e^{U_{\alpha}(y)}
\left( V_{\alpha}(y) - U_{\alpha}(y) - \frac{U^2_{\alpha}(y)}{2} \right)V_{\alpha,0}  \phi_{n,1}(y)\, dy
\\&\quad+O\left(\int_{\{y\in \R^2: |y| < \delta^{-\theta}\}} e^{U_{\alpha}(y)} \frac{|y|^{\alpha}}{p_n}  \log\left(|x|^{2+\alpha}+1\right)\left| V_{\alpha}(y) - U_{\alpha}(y) - \frac{U_{\alpha}(y)^2}{2}  \right|\, dy
\right)
\\&\quad
+ O \left( \int_{\{y\in \R^2: |y| < \delta^{-\theta}\}} e^{U_{\alpha}(y)} \frac{|y|^{\alpha}}{p_n}  \log\left(|x|^{2+\alpha}+1\right)\frac{\log^4(|y|^{2+\alpha}+2)}{p_n} \, dy\right)+
        O\left(\frac{1}{p_n}\right)
\\&=\Big(\frac{4k\eta^2\log \eta}{3(2k\eta-2-\alpha)(\eta+1)} - \frac{1}{3} \Big)\int_{\{y: |y| < \delta^{-\theta}\}} |y|^{\alpha}e^{U_{\alpha}(y)}
\left( V_{\alpha}(y) - U_{\alpha}(y) - \frac{U^2_{\alpha}(y)}{2} \right)V_{\alpha,0}  \phi_{n,1}(y)\, dy\\&\quad +O\left(\frac{1}{p_n}\right),
\end{align*} where we use the following estimate   \begin{align*}
&\frac{2(2+\alpha)}{3} \log \delta = \frac{2(2+\alpha)}{3} \log \left(
e^{-\frac{p_n}{2(2+\alpha)}
        \left(1-\frac{4k\eta\log\eta}
        {(2k\eta-2-\alpha)(1+\frac{1}
       {\eta})}
        \right)}\left(K_2(\eta)+
        O\left(\frac{1}{p_n}\right)\right)
\right) \\&=
p_n\Big(\frac{4k\eta^2\log \eta}{3(2k\eta-2-\alpha)(\eta+1)} - \frac{1}{3} \Big)+
\frac{2(2+\alpha)}{3} \log 
\left(K_2(\eta)+
        O\left(\frac{1}{p_n}\right)
\right).
\end{align*}
Passing to the limit we have
\begin{align*} 
&\int_{\{y: |y| < \delta^{-\theta}\}} |y|^{\alpha}e^{U_{\alpha}(y)}
\left( V_{\alpha}(y) - U_{\alpha}(y) - \frac{U_{\alpha}(y)}{2} \right) V_{\alpha,0}(y)  \phi_{n,1}(y)\, dy  \\&\quad\to a_0 \int_{\mathbb{R}^2}|y|^{\alpha}e^{U_{\alpha}(y)} \left( V_{\alpha} - U_{\alpha} - \frac{U_{\alpha}^2}{2} \right) V_{\alpha,0}^2 \, dy\\&\quad= 8(2+\alpha)\pi a_{0}\Big(1-\frac{1}{2}\log2(2+\alpha)^{2}\Big).
\end{align*}
where the exact value of the last integral can be deduced reasoning as in Lemma 6.1 in \cite{PT}, see also Lemma A.10 in \cite{PT}.
So we can get   \begin{align*}
 M_{1}&=\frac{4(2+\alpha)\pi a_0}{3}  \Big(\log2(2+\alpha)^{2}-2\Big)\Big(1- \frac{4k\eta^2\log \eta}{(2k\eta-2-\alpha)(\eta+1)} \Big)+O\left(\frac{1}{p_n}\right).
 \end{align*}
By \eqref{ch3}, we have
\begin{align*}
 M_{2}&=k
\int_{\{y\in \R^2: |y| \leq \frac{1}{\beta^{k\theta}}\}} e^{U_{0}(y)} \frac{\rho^{2k-2-\alpha}}
{(\beta^{k}y+\rho^{k})^{2-\frac{2}{k}-\frac{\alpha}{k}}} \left( 1 + O \left( \frac{\log^2(|y|+2)}{p} \right) \right)\\&\quad\times\left(  \frac{2(2+\alpha)}{3} \log \rho + O(1) \right) \phi_{n,2}(y) \, dy\\&=k\left(1+ o(1)\right)\left(  \frac{2(2+\alpha)}{3} \log \rho + O(1) \right) 
\int_{\{y\in \R^2: |y| < \beta^{-k\theta}\}} e^{U_{0}(y)}\phi_{n,2}(y) \, dy
\end{align*}
Assume (up to a subsequence) that $\phi_{n,2} \to c_0 Z_0$ fast enough to have 
\begin{align*}
\sup_{\{y: |y| \leq \frac{1}{\beta^{k\theta}}\}} |\phi_{n,2} - c_0 Z_0| \leq \frac{1}{p_n^2},\end{align*}
so that 
\begin{align*}
\int_{\{|y| \leq \frac{1}{\beta^{k\theta}}\}} e^{U_{0}} \phi_{n,2}\, dy
&
= c_0 \underbrace{\int_{\mathbb{R}^2} e^{U_{0}} Z_{0}\, dy }_{=0}+ \int_{\{|y| \leq \frac{1}{\beta^{k\theta}}\}} e^{U_{0}} (\phi_{n,2} - c_0 Z_{0}) \, dy+ \int_{\{|y| > \frac{1}{\beta^{k\theta}}\}} e^{U_{0}} Z_{0}\, dy\\&= O \left( \frac{1}{p_n^2} + \beta^{2k\theta} \right),
\end{align*}
then \begin{align*}
 M_{2}
= O \left( \frac{1}{p_n} +p_n \beta^{2k\theta} \right).
\end{align*}
By \eqref{ch4}, we can have
\begin{align*}
 M_{3}+M_{4}&=O \left( p_n \int_{\big\{x: \Omega\backslash\{|x| < \delta^{1-\theta}\}\big\}} |x|^{\alpha}e^{U_1(x)} dx  \right)\\&
\quad+ O \left( p_n\int_{\big\{x: \Omega\backslash\{|x| < \delta^{1-\theta}\}\big\}\backslash\{|x^{k}
-\rho^{k}|\leq\beta^{k(1-\theta)}\}} |x|^{\alpha}e^{U_{1}(x)}+|x|^{2k-2}
e^{U_{2}(x)}
  dx \right)\\&=O \left(p_n\delta^{(2+\alpha)\theta}\right)
  + O \left( p_n\beta^{2\theta}\right)=\left( p_n e^{-\varepsilon p} \right).
\end{align*}
Hence we have
\begin{align}\label{fgyp3}
&\int_{\Omega} |x|^{\alpha} (p_n|\Upsilon|^{p_n-1} - e^{U_1}) PQ_1 \phi_n \, dy\notag\\&
\quad= \frac{8(2+\alpha)\pi a_0}{3} \Big(\frac{\log2(2+\alpha)^{2}}{2}-1\Big)\Big(1- \frac{4k\eta^2\log \eta}{(2k\eta-2-\alpha)(\eta+1)} \Big)+ o(1).
\end{align}
Finally, with the aid of \eqref{R1}, we can get
\begin{align}\label{fgyp4}
\int_{\Omega} R_1 \phi_{n} &= O \left( \int_{\Omega} |x|^{\alpha} e^{U_1(x)} (|x| + \delta) |\phi_{n}(x)| \, dx \right)
\notag\\
&= O \left( \delta \int_{\mathbb{R}^2} e^{U_{\alpha}(y)}|y|^{\alpha} (|y| + 1) \, dy \right)= O(\delta).
\end{align}
Substituing \eqref{fgyp1}, \eqref{fgyp2}, \eqref{fgyp3} and \eqref{fgyp4} into \eqref{fgyp0}, we get
\begin{align*}
\frac{4(2+\alpha)\pi a_0}{3} \underbrace{\left(1+ \Big(\log2(2+\alpha)^{2}-2\Big)\Big(1- \frac{4k\eta^2\log \eta}{(2k\eta-2-\alpha)(\eta+1)} \Big)\right) }_{\neq 0}= o(1),
\end{align*}

which gives $ a_0 = 0 $.
In a similar manner, by using a function $PQ_2$ as in \cite{LIA}, one can show that also $c_0=0$. 
\end{proof}
We are now ready to give the proof of Proposition \ref{p.3}.

\begin{proof}[Proof of Proposition \ref{p.3}]Let us first prove that the a priori estimate \eqref{ch1} implies existence and uniqueness of a solution $\phi$ to \eqref{1.l1} that satisfies the orthogonality condition \eqref{0z} with $c$ is as in \eqref{1.l2}.

Let us split $ H_{0,k}^1(\Omega)$ into the direct sum of its subspaces
\begin{align*}
Z := \operatorname{span}\{ P\tilde{Z} \} \quad \text{and} \quad Z^\perp := \left\{ \phi \in H_{0,k}^1(\Omega) : \int_{\Omega} |x|^{2k-2} e^{U_2(x)} \tilde{Z}(x) \phi(x) \, dx = 0 \right\},
\end{align*}
with the respective projections denoted by $\Pi : H_{0,k}^1(\Omega) \to Z$ and $\Pi^\perp : H_{0,k}^1(\Omega) \to Z^\perp$.

Problem \eqref{1.l1} can be equivalently written as
\begin{align}\label{plh1}
(\operatorname{Id} - K) \phi = \tilde{h}, \quad \text{with} \quad K \phi := \Pi^\perp (-\Delta)^{-1} (p|\Upsilon|^{p-1} \phi) \quad \text{and} \quad \tilde{h} := -\Pi^\perp (-\Delta)^{-1} h.
\end{align}Here $(-\Delta)^{-1} : L^2_k(\Omega) \to H_{0,k}^1(\Omega)$ is the inverse of the Dirichlet Laplacian and is a compact operator, $\Pi^\perp$ is continuous and $p|\Upsilon|^{p-1} \in L^\infty(\Omega)$ (at fixed $p$), hence $K$ is also compact on $L^2_k(\Omega)$. Similar to Proposition 4.1 in \cite{MPM},  from \eqref{ch1} $K$ is injective. Therefore, from Fredholm's alternative, \eqref{plh1} has a unique solution $\phi \in Z^\perp$ for any $\tilde{h} \in Z^\perp$, this is true in particular if $\tilde{h} := -\Pi^\perp(-\Delta)^{-1}h$ for some $h \in L^\infty_k(\Omega)$ and by elliptic regularity $\phi \in H_{0,k}^1(\Omega) \cap W^{2,2}(\Omega) \subset H_{0,k}^1(\Omega) \cap L^\infty_k(\Omega)$. Moreover, $\phi \in Z^\perp$ if and only if 
$\int_\Omega \nabla P\widetilde{Z}\cdot \nabla \phi=0$, which, from \eqref{1.l1} and by using Lemma 4.7 in \cite{LIA},  gives
\[c =\frac{p \displaystyle\int_{\Omega}|x|^{\alpha} |\Upsilon|^{p-1} P\widetilde{Z}\phi -hP\widetilde{Z}}{\int_\Omega |x|^{2k-2}e^{U_2(x)}\widetilde{Z} P\widetilde{Z}}=\frac{p \displaystyle\int_{\Omega}|x|^{\alpha} |\Upsilon|^{p-1} P\widetilde{Z}\phi -hP\widetilde{Z}}
{\int_{\Omega}\big|\nabla P\widetilde{Z}\big|^{2}}.\]
So, in order to conclude the proof, we need to prove estimate \eqref{ch1}.
Lemma \ref{hD2} and \eqref{h2} give that
\begin{align}\label{lhp2}
\|\phi\|_{\infty} &\leq C p \left( \|h\|_* + \bigl\| c|x|^{2k-2}e^{U_2(x)}\tilde{Z}(x)\bigr\|_{*} \right)\notag
\\&
\leq C p \left( \|h\|_* + |c|\bigl\| \tilde{Z} \bigr\|_{\infty} \bigl\| |x|^{2k-2}e^{U_2(x)} \bigr\|_{*} \right)\notag\\&
\leq C p \bigl( \|h\|_* + |c| \bigr),
\end{align}
therefore it suffices to show that $c = O(\|h\|_*)$. This is equivalent to prove that
\begin{align*}
\lambda_p \leq C \|h\|_*, \quad \text{for any } |c| = \lambda_p,
\end{align*}
where $\lambda_p > 0$ is fixed. We argue by contradiction, assuming that there exist solutions to \eqref{1.l1} for
\begin{align}\label{lhp4}
\|h\|_* = o(\lambda_p).
\end{align}
For our convenience, let us choose $\lambda_p = \frac{1}{p}$, so that $\|\phi\|_{\infty}$ is bounded by \eqref{lhp2}. Multiplying equation \eqref{1.l1} by $P \tilde{Z}$ and integrating by parts,  using $c=\frac 1p$ and \eqref{0z},  we get
\begin{align}\label{lhp5}
\int_\Omega p|x|^{\alpha}|\Upsilon|^{p-1}\phi P \tilde{Z} \underbrace{- \int_\Omega |x|^{2k-2}e^{U_2(x)}\phi(x)
 \tilde{Z}(x)dx }_{= 0}= \int_\Omega h P \tilde{Z} + \frac{1}{p} \int_\Omega |\nabla P \tilde{Z}|^2
\end{align}
since \(\phi\)  satisfies the orthogonality condition \eqref{0z}:
\begin{align*}
\int_{\Omega}\nabla P\widetilde{Z}\cdot\nabla\phi
=\int_{\Omega}|x|^{2k-2}e^{U_{2}(x)}\widetilde{Z}(x)\phi(x)\,\mathrm{d}x=0. \end{align*}

From Lemma \ref{lh1} and \eqref{lhp2},  we deduce
\begin{align}\label{lhp6}
&\int_{\Omega} p|x|^{\alpha} |\Upsilon|^{p-1} \phi P \tilde{Z}\underbrace{- \int_\Omega |x|^{2k-2}e^{U_2(x)}\phi(x)
 \tilde{Z}(x)dx}_{= 0}
\notag\\&= \int_{\{|x^k - \rho^k| \leq \beta^{k(1-\theta)}\}} (p|x|^{\alpha} |\Upsilon(x)|^{p-1} - |x|^{2k-2} e^{U_2(x)}) \phi(x) P \tilde{Z}(x) \, dx\notag\\&
+ \int_{\{|x^k - \rho^k| \leq \beta^{k(1-\theta)}\}}  |x|^{2k-2} e^{U_2(x)} \phi(x) \bigl( P \tilde{Z}(x) - \tilde{Z}(x) \bigr) \, dx
\notag
\\&- \int_{\{|x^k - \rho^k| \geq \beta^{k(1-\theta)}\}}  |x|^{2k-2} e^{U_2(x)} \phi(x)\tilde{Z}(x)\, dx+ \int_{\{|x^k - \rho^k| \geq \beta^{k(1-\theta)}\}}p|x|^{\alpha} |\Upsilon|^{p-1} \phi P \tilde{Z}\notag
\\&= \frac{1}{p} \int_{\{|y| \leq \frac{1}{\beta^{k\theta}}\}} e^{U_{0}(y)} \left( V_{0}(y) - U_{0}(y) - \frac{U_{0}(y)^2}{2} + O \left( \frac{\log^4(|y| + 2)}{p^2} \right) \right) Z_1(y) \phi_2(y) \, dy+o\left(\frac{1}{p}\right),
\end{align}
where \( Z_1 \) is the function defined in \eqref{pz.06} and $\phi_2(y) := \phi(\beta^k y + \rho^k)$.

Arguing as in the proof of Lemma \ref{hD2}, we have that the rescalement $\phi_2(y)$ converges to $ C Z_0 $, for some $ C \in \mathbb{R} $, where $ Z_0 $ is the function defined in \eqref{pz.06}. Therefore, we can derive
\begin{align*}
& \int_{\{|y| \leq \frac{1}{\beta^{k(1-\theta)}}\}} e^{U_{0}(y)} \left( V_{0}(y) - U_{0}(y) - \frac{U_{0}(y)^2}{2} + O \left( \frac{\log^4(|y| + 2)}{p^2} \right) \right) Z_1(y) \phi_2(y) \, dy
\\ &\quad
\to C \int_{\mathbb{R}^2} e^U_{0} \left( V_{0} - U_{0} - \frac{U_{0}^2}{2} \right) Z_1 Z_0 = 0.
\end{align*} 
so that 
\begin{align*}
\int_{\Omega} p |\Upsilon|^{p-1} \phi P \tilde{Z} - \int_{\Omega} |x|^{2k-2} e^{U_2(x)} \phi(x)  \tilde{Z}(x) \, dx = o \left( \frac{1}{p} \right).
\end{align*}
In the right-hand side of \eqref{lhp5}, using \eqref{h3}, we can get
\begin{align*}
\left| \int_{\Omega} h P \tilde{Z} \right| \leq C \int_{\Omega} |h| \leq C \|h\|_* = o \left( \frac{1}{p} \right).
\end{align*}

Hence,  using Lemma 4.7 in \cite{LIA},  substituting into \eqref{lhp5}, we obtain the following contradiction
\begin{align*}
o \left( \frac{1}{p} \right) = \frac{8}{3p} k \pi + o \left( \frac{1}{p} \right),
\end{align*}

which completes the proof.
\end{proof}

\section{ Proof of Theorem \ref{athm1.1} }

In this section, we solve the nonlinear problem by virtue of the contraction mapping principle. The original equation is then reduced to a finite-dimensional energy minimization problem involving the parameter $\eta$. We thus finish the proof of Theorem \ref{athm1.1}.

First of all, we solve the problem
\begin{equation}\label{5.0}
\begin{cases}
\mathcal{L}\phi(x) + \mathcal{E}(x) + \mathcal{N}(\phi)(x) = c|x|^{2k-2}e^{U_2(x)}\tilde{Z}(x), & x \in \Omega,
\\
\phi(x) = 0, & x \in \partial\Omega, \\
\displaystyle\int_{\Omega}|x|^{2k-2}e^{U_2(x)}\tilde{Z}(x)\phi(x)\,dx = 0
\end{cases}
\end{equation}
for $c\in \R$.

\begin{proposition}\label{5.1}
Assume that the parameters \(\delta, \beta, \rho, \tau, \eta\) satisfy \eqref{1.11}. Then there exists \(C > 0\) such that, for any \(\eta\) satisfying \eqref{x.1}, equation \eqref{5.0} has, for $p$ large enough, a unique solution \((\phi, c) = (\phi_p(\eta), c_p(\eta)) \in H_{0,k}^1(\Omega) \cap L^\infty_K(\Omega) \times \mathbb{R}\), such that
\begin{equation}\label{5.2}
\|\phi_p(\eta)\|_{H_0^1} + \|\phi_p(\eta)\|_\infty \leq \frac{C}{p^3} \quad \text{and} \quad |c_p(\eta)| \leq \frac{C}{p^4},\end{equation}where $\mathcal{L}$, $\mathcal{E}$, $\mathcal{N}$ are defined in \eqref{1.8}, \eqref{1.9}, \eqref{1.10}.
Moreover, the map \(\eta \mapsto \phi_p(\eta)\) is of class \(C^1\).
\end{proposition}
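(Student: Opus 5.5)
We recast \eqref{5.0} as a fixed point problem and apply the contraction mapping principle, relying on Proposition \ref{p.3} for the linear part and on Proposition \ref{lem2.6} for the error. Let $T$ be the solution operator of Proposition \ref{p.3}: $T(h)=(\phi,c)$ is the unique solution of \eqref{1.l1} with $\phi$ satisfying \eqref{0z}, and $\|\phi\|_\infty+|c|\le Cp\|h\|_*$. Then \eqref{5.0} is equivalent to
$$\phi=\mathcal{A}(\phi):=T_1\bigl(-\mathcal{E}-\mathcal{N}(\phi)\bigr),$$
where $T_1$ is the $\phi$-component of $T$. We work on the closed set
$$\mathcal{F}:=\Bigl\{\phi\in H^1_{0,k}(\Omega)\cap L^\infty_k(\Omega):\ \phi \text{ satisfies } \eqref{0z},\ \|\phi\|_\infty\le \tfrac{M}{p^3}\Bigr\},$$
with $M$ large. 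Note that $\mathcal{N}(\phi)$ inherits the $k$-symmetry, because $\Upsilon$ is $k$-symmetric.

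\textbf{Key nonlinear estimate.} We aim to show that for $\|\phi\|_\infty, \|\phi_i\|_\infty\le M/p^3$,
$$\|\mathcal{N}(\phi)\|_*\le Cp\|\phi\|_\infty^2,\qquad \|\mathcal{N}(\phi_1)-\mathcal{N}(\phi_2)\|_*\le Cp\bigl(\|\phi_1\|_\infty+\|\phi_2\|_\infty\bigr)\|\phi_1-\phi_2\|_\infty .$$
By Taylor's formula, $|\mathcal{N}(\phi)|\le Cp^2|x|^\alpha|\Upsilon+t\phi|^{p-2}\phi^2$ for some $t\in(0,1)$. Near the peaks $|\Upsilon|\sim p\tau\sim \mathrm{const}$, so $|\phi|/|\Upsilon|=O(p^{-3})$. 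Hence $(1+t\phi/\Upsilon)^{p-2}=1+O(p^{-2})$, and
$$p^2|x|^\alpha|\Upsilon|^{p-2}\le \frac{Cp}{|\Upsilon|}\,p|x|^\alpha|\Upsilon|^{p-1}.$$
By \eqref{ch4} and \eqref{h2}, $\|p|x|^\alpha|\Upsilon|^{p-1}\|_*\le C$.

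The delicate region is where $\Upsilon$ is small, i.e.\ near the nodal line. There we will use the cruder bound
$$|\mathcal{N}(\phi)|\le Cp|x|^\alpha\bigl(|\Upsilon|^{p-1}+|\phi|^{p-1}\bigr)|\phi|.$$
Here $|\Upsilon|^{p-1}$ is exponentially small, since $|\Upsilon|\le (1-c)\,p\tau$ on that set, and $|\phi|^{p-1}\le (M/p^3)^{p-1}$ is super-exponentially small, so both are harmlessly absorbed in the $\|\cdot\|_*$ weight. I expect this case split to be the main obstacle: one has to check that, in the annular regions between peaks described in Lemma \ref{lem2.5}, the comparison of $|\Upsilon+t\phi|^{p-2}$ with $|\Upsilon|^{p-2}$ stays uniform. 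I would handle it by separating $\{|\Upsilon|\ge p^{-1}\}$, where the ratio argument works, from its complement, where everything is $O(e^{-cp})$.

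\textbf{Contraction.} With these estimates, Proposition \ref{p.3} and Proposition \ref{lem2.6} give
$$\|\mathcal{A}(\phi)\|_\infty\le Cp\bigl(\|\mathcal{E}\|_*+\|\mathcal{N}(\phi)\|_*\bigr)\le \frac{C}{p^3}+Cp^2\frac{M^2}{p^6}\le \frac{M}{p^3}$$
once $M\ge 2C$ and $p$ is large. In the same way,
$$\|\mathcal{A}(\phi_1)-\mathcal{A}(\phi_2)\|_\infty\le \frac{CM}{p}\,\|\phi_1-\phi_2\|_\infty,$$
so $\mathcal{A}$ is a contraction on $\mathcal{F}$ and has a unique fixed point $\phi_p(\eta)$. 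Two further bounds follow:
\begin{itemize}
\item From \eqref{ch1}, $|c_p(\eta)|\le Cp\|\mathcal{E}+\mathcal{N}(\phi)\|_*\le C/p^3$. To reach $C/p^4$, I would instead use formula \eqref{1.l2}. Its numerator is $p\int|x|^\alpha|\Upsilon|^{p-1}P\tilde Z\phi-\int hP\tilde Z$. The term $\int hP\tilde Z$ is $O(\|h\|_*)=O(p^{-4})$ by \eqref{h3}. The other term, after rescaling via \eqref{ch3}, equals $\int e^{U_0}Z_1\phi_2+O(p^{-1}\|\phi\|_\infty)$. The leading part is controlled by the orthogonality \eqref{0z} up to exponentially small tails, which leaves $O(p^{-4})$.
\item The $H^1_0$ bound comes from Lemma \ref{lh3}: $\|\phi\|_{H^1_0}\le C(\|\phi\|_\infty+\|h\|_*)\le C/p^3$.
\end{itemize}

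\textbf{Regularity in $\eta$.} By Lemma \ref{lem2.1}, all parameters, and hence $\Upsilon$, $\mathcal{E}$ and the operator $\mathcal{L}$, depend $C^1$ on $\eta$. Consider the map
$$(\eta,\phi,c)\mapsto\bigl(\mathcal{L}\phi+\mathcal{E}+\mathcal{N}(\phi)-c|x|^{2k-2}e^{U_2}\tilde Z,\ \text{orthogonality}\bigr).$$
At the solution, its partial derivative in $(\phi,c)$ is a small perturbation of the invertible operator from Proposition \ref{p.3}, because $\mathcal{N}'(\phi)$ is small in the relevant norms. The implicit function theorem, together with uniqueness of the fixed point, then gives that $\eta\mapsto\phi_p(\eta)$ is $C^1$.
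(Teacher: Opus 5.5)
Your proposal is correct and is essentially the paper's argument: a contraction for $\phi=-T(\mathcal{E}+\mathcal{N}(\phi))$ on the ball $\|\phi\|_\infty\le M/p^3$ via Propositions \ref{p.3} and \ref{lem2.6}, the improved bound on $c$ from \eqref{1.l2} plus orthogonality, Lemma \ref{lh3} for the $H^1_0$ norm, and the implicit function theorem for the $C^1$ dependence on $\eta$. The only variation is that you bound $\mathcal{N}(\phi)$ by splitting on the size of $|\Upsilon|$ rather than through the convexity estimate of Lemma \ref{5.3}, and for that the cut should be at a fixed small constant $c_0<1$ with $c_0^{p}$ beating $\bigl\||x|^\alpha\bigr\|_*\le Ce^{C\lambda p}$; the bound $|\Upsilon|\le(1-c)p\tau$ does not make $|\Upsilon|^{p-1}$ small, since $p\tau\to A_k>1$ and the negative-peak cores satisfy it, and cutting at $p^{-1}$ while using only the factor $p/|\Upsilon|$ loses a power of $p$ on $\{p^{-1}\le|\Upsilon|\le c_0\}$.
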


Before proving the result, we need an essential estimate.

\begin{lemma}\label{5.3}
Assume $\phi$ is a solution to \eqref{5.0}.
There exists \(C > 0\) such that, uniformly in \(\Omega\),
\begin{align}\label{new3}
p|x|^{\alpha}|\Upsilon(x) + \phi(x)|^{p-2} \leq C \left(e^{U_1(x)} + |x|^{2k-2}e^{U_2(x)}\right) + (p\|\phi\|_\infty)^{p-2}.
\end{align}
\end{lemma}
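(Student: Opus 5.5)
The proof compares $|\Upsilon+\phi|$ with $|\Upsilon|$ pointwise and splits according to which of the two dominates. The starting point is the elementary convexity inequality, valid for $p\ge 3$ and all real $a,b$:
\[
|a+b|^{p-2}\le 2^{p-3}\big(|a|^{p-2}+|b|^{p-2}\big).
\]
The factor $2^{p-3}$ is too large to absorb into a constant, so I would not apply it globally.

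Instead, fix $x\in\Omega$ and distinguish two cases.

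\emph{Case 1: $|\phi(x)|\le |\Upsilon(x)|/p$.} Then
\[
|\Upsilon+\phi|^{p-2}\le |\Upsilon|^{p-2}\Big(1+\frac1p\Big)^{p-2}\le e\,|\Upsilon|^{p-2},
\]
so $p|x|^\alpha|\Upsilon+\phi|^{p-2}\le e\,p|x|^\alpha|\Upsilon|^{p-2}$. It remains to bound $p|x|^\alpha|\Upsilon|^{p-2}=\big(p|x|^\alpha|\Upsilon|^{p-1}\big)\,|\Upsilon|^{-1}$. Lemma \ref{lh1}, estimate \eqref{ch4}, controls the first factor by $C\big(|x|^\alpha e^{U_1}+|x|^{2k-2}e^{U_2}\big)$, so I only need a lower bound on $|\Upsilon|$ where this quantity is relevant.

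In the regions $|x|\le\delta^{1-\theta}$ and $|x^k-\rho^k|\le\beta^{k(1-\theta)}$, the expansions \eqref{2.18} and \eqref{2.108} give $|\Upsilon|\ge c\,p\tau\ge c'>0$, since $\tau\sim A_k/p$. Away from these regions I would not need a lower bound on $|\Upsilon|$. Since $|\Upsilon|\le C$, one has $|\Upsilon|^{p-2}\le C^{p-2}$, and more precisely \eqref{frac1}, \eqref{frac2} give $|\Upsilon|\le p\tau(1+o(1))$ there. Repeating the computation in the proof of \eqref{ch4} with exponent $p-2$ in place of $p-1$ then gives the same bound up to a factor $(p\tau)^{-1}=O(1)$. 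This is the cleanest route: redo the argument of \eqref{ch4} with $p-2$, noting that each step only used $|\Upsilon|\le p\tau(1+\text{small}/p)^{\,\cdot}$ raised to the relevant power, and $(p\tau)^{-1}$ stays bounded. This produces the term $C(e^{U_1}+|x|^{2k-2}e^{U_2})$, where the weight $|x|^\alpha\le C$ is absorbed.

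\emph{Case 2: $|\phi(x)|> |\Upsilon(x)|/p$.} Then $|\Upsilon+\phi|\le(p+1)|\phi(x)|$, so
\[
p|x|^\alpha|\Upsilon+\phi|^{p-2}\le C\,p\,(p+1)^{p-2}\|\phi\|_\infty^{p-2}.
\]
This is not quite $(p\|\phi\|_\infty)^{p-2}$, since it carries the extra factor $p(1+1/p)^{p-2}\le ep$. To remove it I would refine the threshold to $|\phi|>|\Upsilon|/p^{2}$ versus $\le$. With the smaller threshold, Case 1 becomes even better, with factor $(1+p^{-2})^{p}\to1$. Case 2 then gives $|\Upsilon+\phi|\le (p^2+1)|\phi|$, which is worse. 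So the right trade-off is the original threshold, with the harmless factor $ep$ absorbed by the intended use: for $\|\phi\|_\infty\le C/p^3$ one has $p(p\|\phi\|_\infty)^{p-2}\le (C/p^{2})^{p-2}p$, which is still smaller than any power of $p^{-1}$. If the exact form of \eqref{new3} is required, I would instead state the bound in Case 2 as $(2p\|\phi\|_\infty)^{p-2}$ and observe that the constant $2^{p-2}$ is immaterial in the same regime.

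The main obstacle is the first case outside the two concentration cores, where $|\Upsilon|$ can vanish on the nodal line. There the factor $|\Upsilon|^{-1}$ must not be used. Instead one must directly re-run the piecewise analysis behind \eqref{ch4}, in the regions $|x|\le\rho/2$, $\rho/2<|x|\le2\rho$, $2\rho<|x|<1$ and $|x|\ge1$, with exponent $p-2$. This works because $|\Upsilon|^{p-2}\le |\Upsilon|^{p-1}/|\Upsilon|$ is replaced by the explicit upper bounds for $|\Upsilon|$, so that only upper bounds on $|\Upsilon|$ are ever needed.
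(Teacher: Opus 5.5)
Your first step, redoing the proof of \eqref{ch4} with exponent $p-2$ and using only upper bounds on $|\Upsilon|$ outside the two concentration cores, is exactly what the paper does. The gap is in the combination step. Your case split yields \eqref{new3} only with $C p\,(p\|\phi\|_\infty)^{p-2}$, or $(2p\|\phi\|_\infty)^{p-2}$, as the second term, so the lemma as stated is not proved. Your conclusion that the threshold $|\Upsilon|/p$ is the best trade-off is also wrong: you only tried moving it in the unfavourable direction, to $|\Upsilon|/p^2$.

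Take instead the threshold $2|\Upsilon(x)|/p$. The first case then costs only $(1+\frac2p)^{p-2}\le e^2$. In the second case $|\Upsilon+\phi|\le(\frac p2+1)|\phi|$, which gives
\[
p|x|^{\alpha}|\Upsilon+\phi|^{p-2}\le e^{2}\,2^{2-p}\,p\,\sup_{\Omega}|x|^{\alpha}\,(p\|\phi\|_\infty)^{p-2}\le (p\|\phi\|_\infty)^{p-2}
\]
for $p$ large, since the exponential factor $2^{2-p}$ beats the polynomial factor $p$.

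The paper, following Lemma 5.2 of \cite{LIA}, uses weighted convexity of $t\mapsto|t|^{p-2}$ instead. For $s\in(0,1)$,
\[
|a+b|^{p-2}\le s^{3-p}|a|^{p-2}+(1-s)^{3-p}|b|^{p-2}.
\]
The choice $s=1-\frac1p$ gives $p|a+b|^{p-2}\le e\,p|a|^{p-2}+(p|b|)^{p-2}$, that is, exactly $p\cdot p^{p-3}=p^{p-2}$ with no loss. Taking $s=1-\frac2p$ also absorbs $\sup_\Omega|x|^\alpha$. Your extra factor $ep$ comes solely from the crude bound $|\Upsilon+\phi|\le(p+1)|\phi|$, which the convex combination avoids. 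You are right that the weaker form would suffice in Proposition \ref{5.1}, but that does not establish \eqref{new3}.
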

\begin{proof}
Arguing as in the proof of Lemma \ref{lh1} where we proved  \eqref{ch4},  we can get 
\begin{align*}
p|x|^{\alpha}|\Upsilon(x)|^{p-2} \leq C \left(|x|^{\alpha}e^{U_1(x)} + |x|^{2k-2}e^{U_2(x)}\right).
\end{align*}
Then, thanks to the convexity of the map \(t \mapsto |t|^{p-2}\), as in \cite[Lemma 5.2 ]{LIA},  \eqref{new3} follows.
\end{proof}
\begin{proof}[Proof of Proposition \ref{5.1}.]
From Proposition \ref{p.3}, we can consider the linear map \( T : (L^\infty(\Omega), \|\cdot\|_*) \to L^\infty(\Omega) \), defined as \( h \mapsto Th := \phi \), where \( \phi \) is the solution to \eqref{1.l1} in the space
\( L^\infty(\Omega) \) endowed with the weighted norm \( \|\cdot\|_* \) defined in \eqref{h1}. In view of \eqref{ch1}  there exists \( C > 0 \), such that
\begin{equation}\label{5.4}
\|Th\|_\infty \leq C p \|h\|_*.\end{equation}
Using the operator \( T \), problem \eqref{5.0} can be transformed to the nonlinear equation

\begin{equation}\label{5.5}
\phi = \mathcal{A}(\phi) := -T(\mathcal{E} + \mathcal{N}(\phi)),
\end{equation}
where \( \mathcal{A} : (L^\infty_k(\Omega), \|\cdot\|_\infty) \to (L^\infty_k(\Omega), \|\cdot\|_\infty) \).
In the following, we will show that \( \mathcal{A}\) is a contraction on  the set $ S : =\big\{\phi\in L^\infty_k(\Omega) : \|\phi\|_\infty\leq\frac{M}{p^3} \big\}$,  for $p$ large enough, where \( M \) is a suitable positive constant. 
The Lagrange's theorem gives
\begin{align*}
|\mathcal{N}(\phi)| \leq p(p-1)|x|^{\alpha}|\Upsilon + O(|\phi|)|^{p-2}\phi^2,
\end{align*}
\begin{align*}
|\mathcal{N}(\phi_1) - \mathcal{N}(\phi_2)| \leq p(p-1)|x|^{\alpha}|\Upsilon + O(|\phi_1| + |\phi_2|)|^{p-2}(|\phi_1| + |\phi_2|)|\phi_1 - \phi_2|.
\end{align*}
By Lemma \ref{5.3} and \eqref{h2}, we can obtain
\begin{align}\label{5.6}
\|\mathcal{N}(\phi)\|_* &\leq C
 p \|\phi\|_\infty^2\big\|p|x|^{\alpha}|\Upsilon + O(|\phi|)|^{p-2}\big\|_*
\notag\\&\leq C p \|\phi\|_\infty^2 \big(1 + p^{p-2}\|\phi\|_\infty^{p-2}\|1\|_*\big)
\end{align}and
\begin{align}\label{5.7}
\|\mathcal{N}(\phi_1) - \mathcal{N}(\phi_2)\|_*&\leq C
p (\|\phi_1\|_\infty + \|\phi_2\|_\infty) \|\phi_1 - \phi_2\|_\infty
\big\|p|x|^{\alpha}|\Upsilon + O(|\phi_{1}|+|\phi_{2}|)|^{p-2}\big\|_*
\notag\\&\leq C p (\|\phi_1\|_\infty + \|\phi_2\|_\infty) \|\phi_1 - \phi_2\|_\infty \big(1 + p^{p-2}(\|\phi_1\|_\infty + \|\phi_2\|_\infty)^{p-2}\|1\|_*\big).
\end{align}

By Proposition \ref{lem2.6}, \eqref{5.4}, \eqref{5.5} and \eqref{5.6}, we can get
\begin{align*}
\|\mathcal{A}(\phi)\|_\infty \leq C p (\|\mathcal{E}\|_* + \|\mathcal{N}(\phi)\|_*) \leq \frac{C^2}{p^3} + \frac{C^2 M^2}{p^4} + \frac{C^2 M^p}{p^{2p}}\|1\|_*\leq \frac{M}{p^3},
\end{align*}
if \( M \) is strictly larger than \( C^2 \) and \( p \) is large enough.
Similarly, by \eqref{5.7}, we can get
\begin{align*}
\|\mathcal{A}(\phi_1) - \mathcal{A}(\phi_2)\|_\infty &\leq C p \|\mathcal{N}(\phi_1) - \mathcal{N}(\phi_2)\|_* \\
&\leq \frac{2C^2 M}{p} \|\phi_1 - \phi_2\|_\infty + \frac{C^2 (2M)^{p-1}}{p^{2p-3}} \|\phi_1 - \phi_2\|_\infty\|1\|_* \\
&\leq \frac{1}{2} \|\phi_1 - \phi_2\|_\infty,
\end{align*}
for \( p \) large enough. Therefore, \( \mathcal{A} \) is a contraction on \( S \). Then \( \mathcal{A} \) has a fixed point \( \phi \in L^\infty_k(\Omega) \), which is the unique solution to \eqref{5.5},  and also to  \eqref{5.0},  satisfying \( \|\phi\|_\infty \leq M/p^3 \).

Furthermore, by reasoning as in the proof of Proposition \ref{p.3}, we can find
\begin{align*}
|c| \leq C \left( \|h\|_* + \frac{\|\phi\|_\infty}{p} \right)\leq C \left( \|\mathcal{E}\|_* + \|\mathcal{N}(\phi)\|_* + \frac{\|\phi\|_\infty}{p} \right) \leq \frac{C}{p^4}.
\end{align*}
This, together with Lemma \ref{lh3}, implies that
\begin{align*}
\|\phi\|_{H_0^1} \leq C \left( \|\phi\|_\infty + \|\mathcal{E}\|_* + \|\mathcal{N}(\phi)\|_* \right) \leq \frac{C}{p^3}.
\end{align*}

The standard application of the implicit function theorem gives the regularity of $ \eta \mapsto \phi $ and concludes the proof.
\end{proof}
\subsection*{ The reduced energy $F_p$}

Let us introduce the functional
\begin{align*}
I_p(u) := \int_{\Omega} \left( \frac{1}{2}|\nabla u|^2 - \frac{1}{p+1}|x|^\alpha|u|^{p+1} \right), \quad u \in H_0^1(\Omega),
\end{align*}
whose critical points are solutions to \eqref{1.0}.  Let us define the reduced energy
\begin{equation}\label{F.2}
F_p(\eta) := I_p(\Upsilon + \phi_p(\eta))
\end{equation}
where $\Upsilon$ is as defined in \eqref{1.6} and $\phi_p(\eta)$ is the unique solution to \eqref{5.0}. Observe that $\phi_p(\eta)$ and $c_p(\eta)$ are well defined by Proposition \ref{5.1} when \( \eta \) satisfies \eqref{x.1}. \\
We would like to find \( \eta = \eta(p) \) satisfying \( c_p(\eta) = 0 \). This amounts to finding a critical point of the reduced energy.

\begin{lemma}\label{5.8}
The functional \( \eta \mapsto F_p(\eta) \) is of class \( C^1 \) and, for \( p \) large enough, if \( F'_p(\eta) = 0 \) then \( c_p(\eta) = 0 \).
\end{lemma}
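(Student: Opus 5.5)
The plan is the standard Lyapunov--Schmidt argument, as in \cite{LIA, PMA}. Regularity first: by Proposition \ref{5.1} the map $\eta\mapsto\phi_p(\eta)$ is $C^1$ into $H^1_{0,k}(\Omega)\cap L^\infty_k(\Omega)$. By Lemma \ref{lem2.1} the parameters $\tau,\delta,\beta,\rho$ depend smoothly on $\eta$, so $\eta\mapsto\Upsilon$ is $C^1$ as well. Since $I_p$ is $C^1$ on $H^1_0(\Omega)$, the composition $F_p$ is $C^1$. Differentiating, and using that $\Upsilon+\phi_p$ solves \eqref{5.0} weakly, I get
\begin{align*}
F_p'(\eta)=DI_p(\Upsilon+\phi_p)[\partial_\eta\Upsilon+\partial_\eta\phi_p]
= c_p(\eta)\int_\Omega |x|^{2k-2}e^{U_2}\widetilde Z\,(\partial_\eta\Upsilon+\partial_\eta\phi_p)\,dx .
\end{align*}
The sign convention comes from $-\Delta u-|x|^\alpha|u|^{p-1}u=-c|x|^{2k-2}e^{U_2}\widetilde Z$. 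So it suffices to show that the last integral is nonzero for $p$ large. Then $F_p'(\eta)=0$ forces $c_p(\eta)=0$.

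To treat $\partial_\eta\phi_p$, I differentiate the orthogonality condition $\int_\Omega|x|^{2k-2}e^{U_2}\widetilde Z\phi_p=0$ in $\eta$. This gives
\[
\int_\Omega|x|^{2k-2}e^{U_2}\widetilde Z\,\partial_\eta\phi_p=-\int_\Omega\partial_\eta\big(|x|^{2k-2}e^{U_2}\widetilde Z\big)\phi_p .
\]
Rescaling $x=\sqrt[k]{\beta^ky+\rho^k}$, the derivatives in $\eta$ of $\beta,\rho$ are of order $p$ times the parameters themselves, by the derivative statement of Lemma \ref{lem2.1}. Each $\eta$-derivative therefore produces at most a factor $O(p)$ times an integrable profile in $y$. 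Combined with $\|\phi_p\|_\infty\le C/p^3$ from \eqref{5.2}, this term is $O(p^{-2})$.

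The main term is $\int_\Omega|x|^{2k-2}e^{U_2}\widetilde Z\,\partial_\eta\Upsilon$. I compute $\partial_\eta\Upsilon$ near the negative peaks, using the expansions of Lemma \ref{lem2.2}, especially \eqref{p.2} and \eqref{p.4}. The dominant contribution comes from $-\tau\eta\,\partial_\eta PU_2$, and in it from $\partial_\eta U_0\big(\frac{x^k-\rho^k}{\beta^k}\big)$. In the variable $y$ this equals $-\nabla U_0(y)\cdot\frac{\partial_\eta(\rho^k)}{\beta^k}-\frac{k\partial_\eta\beta}{\beta}\,(2+y\cdot\nabla U_0(y))$. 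The radial part is a multiple of $Z_0$, which is orthogonal to $Z_1$ by \eqref{z0}. The translation part is $\frac{\partial_\eta(\rho^k)}{\beta^k}\,Z_1(y)$ up to a constant. The pairing then gives
\[
c\,\tau\eta\,\frac{\partial_\eta(\rho^k)}{\beta^k}\int_{\mathbb R^2}e^{U_0}Z_1^2\,dy .
\]
Since $\partial_\eta\rho\sim p\frac{\log\text{-coefficient}}{\cdot}\rho$ with a nonzero coefficient, namely the $\eta$-derivative of $\frac{\log\eta}{(2k\eta-2-\alpha)(1+1/\eta)}$ near $\eta_{\infty,k}$, and $\rho^k/\beta^k\ge e^{\varepsilon p}$ by \eqref{x.8}, this term is exponentially large with a definite sign.

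All other contributions are shown to be of lower order. The constant shifts (the $\log\beta$, $\log\rho$, $H(0,0)$ terms) are killed by $\int e^{U_0}Z_1=0$. The terms from $\partial_\eta\tau$ and the $V,W$ corrections are $O(p\tau)$ times bounded functions. The region near the origin and the outer region are handled with the decay of $|x|^{2k-2}e^{U_2}$.

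The main obstacle I expect is checking that the $\partial_\eta\rho$ coefficient is indeed nonzero at $\eta_{\infty,k}$, and that no cancellation arises from the $PU_1$ part. The $PU_1$ part contributes $\tau\,\partial_\eta PU_1$. Near $\rho$ this is essentially a constant plus $O(\beta^k/\rho^k)$ times gradient terms, so it is harmless. If nonvanishing of $\partial_\eta\rho$ were delicate, I would instead allow the correction $\partial_\eta\phi_p$ to be absorbed, since it is only $O(p^{-2})$ against an exponentially large main term.
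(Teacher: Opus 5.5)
You follow the paper's proof. You differentiate $F_p$, use \eqref{5.0} to write $F_p'(\eta)=-c_p(\eta)\int_\Omega|x|^{2k-2}e^{U_2}\widetilde Z\,(\partial_\eta\Upsilon+\partial_\eta\phi_p)$, and move $\partial_\eta$ off $\phi_p$ through the differentiated orthogonality condition. You then show that the $\rho$-translation part of $\partial_\eta\Upsilon$, a multiple of $\widetilde Z\,\rho^k/\beta^k$, dominates.

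One size estimate is wrong, however. Write $y=\frac{x^k-\rho^k}{\beta^k}$ and $e^{U_2}\widetilde Z=\frac{k^2}{\beta^{2k}}(e^{U_0}Z_1)(y)$. The translation part of its $\eta$-derivative is $-\frac{k^2}{\beta^{2k}}\partial_{y_1}(e^{U_0}Z_1)(y)\,\frac{\partial_\eta(\rho^k)}{\beta^k}$. Here $\frac{\partial_\eta(\rho^k)}{\beta^k}=k\frac{\partial_\eta\rho}{\rho}\frac{\rho^k}{\beta^k}\sim p\frac{\rho^k}{\beta^k}$, which is the same exponentially large factor you exploit in the main term. So the $\eta$-derivative is not $O(p)$ times an integrable profile. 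The correct bound, which is the paper's estimate, is $\int_\Omega|x|^{2k-2}\partial_\eta(e^{U_2}\widetilde Z)\,\phi_p=O\big(p\frac{\rho^k}{\beta^k}\|\phi_p\|_\infty\big)=O\big(p^{-2}\frac{\rho^k}{\beta^k}\big)$.

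Likewise, the translation derivatives of the $V_0,W_0$ corrections in $\partial_\eta\Upsilon$ are $O(p^{-1}\rho^k/\beta^k)$, not $O(1)$. The argument still closes. The main term is of exact order $\rho^k/\beta^k$ (note $\tau\,\partial_\eta\rho/\rho=O(1)$) with a nonvanishing coefficient, so these errors are smaller by factors $p^{-2}$ and $p^{-1}$. The separation is polynomial, not exponential as your closing remark asserts.

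The obstacle you flag is a one-line check. Your proposed fallback of absorbing $\partial_\eta\phi_p$ could not help if the coefficient vanished anyway. By Lemma \ref{lem2.1}, $\partial_\eta\rho/\rho=p\,\frac{d}{d\eta}\frac{\eta\log\eta}{(2k\eta-2-\alpha)(\eta+1)}+O(1)$, and
\[
\frac{d}{d\eta}\,\frac{\eta\log\eta}{(2k\eta-2-\alpha)(\eta+1)}=\frac{(2k\eta-2-\alpha)(1+\eta)-(2k\eta^2+2+\alpha)\log\eta}{(2k\eta-2-\alpha)^2(1+\eta)^2}>0
\]
for all $\eta$ in \eqref{x.1}, since $2k\eta>2+\alpha$ and $\log\eta<0$ by \eqref{x.2}. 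This is exactly the positivity of $b(\eta)$ in the paper.
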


\begin{proof}
The regularity of \( F_p \) follows from the regularity of the map \( \eta \mapsto \phi_p(\eta) \) in Proposition \ref{5.1}.

If \( F'_p(\eta) = 0 \), then by differentiating under the integral sign and the orthogonality condition \eqref{0z}, we can obtain

\begin{align*}
0 &= \int_{\Omega} (\Delta(\Upsilon + \phi) +|x|^{\alpha} |\Upsilon + \phi|^{p-1}(\Upsilon + \phi))(\partial_\eta \Upsilon + \partial_\eta \phi)
\\&
= -c(\eta) \int_{\Omega} |x|^{2k-2}e^{U_2(x)} \tilde{Z}(x)(\partial_\eta \Upsilon(x) + \partial_\eta \phi(x))dx
\\&
= -c(\eta) \int_{\Omega} |x|^{2k-2}e^{U_2(x)} \tilde{Z}(x)\partial_\eta \Upsilon(x)dx
\\&\quad
+ c(\eta) \int_{\Omega} |x|^{2k-2}\partial_\eta (e^{U_2(x)}\tilde{Z}(x))\phi(x)dx.
\end{align*}

From \eqref{1.6},  Lemma \ref{lem2.1} and using that $\left|\nabla U_\alpha(y)\cdot y\right|\leq C$ as well as for the functions $V_\alpha, W_\alpha, U_0,V_0,W_0$, we can get
\begin{align*}
\partial_\eta \Upsilon(x) &= \partial_\eta \tau \left( PU_1 + \frac{PV_1}{p} + \frac{PW_1}{p^2} \right) + \tau \left( P\nabla U_{\alpha} \left( \frac{x}{\delta} \right) + \frac{P\nabla V_{\alpha} \left( \frac{x}{\delta} \right)}{p} + \frac{P\nabla W_{\alpha} \left( \frac{x}{\delta} \right)}{p^2} \right) \cdot \left( -\frac{x}{\delta^2} \right) \partial_\eta \delta  \\
&\quad
- (\partial_\eta \tau \eta + \tau) \left( PU_2 + \frac{PV_2}{p} + \frac{PW_2}{p^2} \right)- \tau \eta \left( -\frac{x^k - \rho^k}{\beta^{2k}} \partial_\eta (\beta^k) - \frac{\partial_\eta (\rho^k)}{\beta^k} \right)
\\
&\quad  \times\left( P\nabla U_{0} \left( \frac{x^k - \rho^k}{\beta^k} \right) + \frac{P\nabla V_{0}\left( \frac{x^k - \rho^k}{\beta^k} \right)}{p} + \frac{P\nabla W_{0} \left( \frac{x^k - \rho^k}{\beta^k} \right)}{p^2} \right)  \\
&\quad +O \left(\left| \tau\frac 1 \delta \partial_\eta \delta+ \tau  \frac 1 \beta  \partial_\eta\beta \right|
\right)\\
&= O \left( \frac{1}{p} \left( \log \frac{1}{\delta} + \log \frac{1}{\beta^k} \right) \right) + O \left( \frac{1}{p}  \left( \frac{\partial_\eta \delta}{\delta} + \frac{\partial_\eta (\beta^k)}{\beta^k} \right) \right) \\
&\quad + \tau \eta P \left( \left( 1 + O \left( \frac{1}{p} \right) \right) \frac{\frac{x^k - \rho^k}{\beta^k}}{
1+\left| \frac{x^k - \rho^k}{\beta^k}\right|^{2}
}  \frac{\partial_\eta (\rho^k)}{\beta^k} \right) \\
&= \frac{\tau \eta}{4} P \tilde{Z}(x)k \frac{\partial_\eta \rho}{\rho} \frac{\rho^k}{\beta^k} \left( 1 + O \left( \frac{1}{p} \right) \right)+ O(1) \\
&= b(\eta)P \tilde{Z}(x) \frac{\rho^k}{\beta^k} \left( 1 + O \left( \frac{\log p}{p} \right) \right) + O(1),
\end{align*}
where
\[
b(\eta) := \frac{\sqrt{e}}{4}
\eta^{\frac{(2k-2-\alpha)\eta-(2+\alpha)}
        {(2k\eta-2-\alpha)(1+\eta)}}
k K_1(\eta) \frac{ (2k\eta-2-\alpha)(1+\eta)- (2k\eta^2 + 2+\alpha) \log \eta}{(2k\eta-2-\alpha)^2(1+\eta)^2}
\]
is uniformly bounded from above and below by positive constants and $K_1(\eta)$ is defined in Lemma \ref{lem2.1}.
From the definition of $U_{0}(y)$ and $Z_1(y)$, we have \( e^{U_{0}(y)}|Z_1(y)| + |\nabla (e^{U_{0}(y)} Z_1(y)) \cdot y| = O\left(\frac{1}{|y|^5+1}\right) \), and then we can get
\begin{align*}
\partial_\eta \left( e^{U_2(x)} \tilde{Z}(x) \right)
&=
 \partial_\eta \left( \frac{k^{2}}{\beta^{2k}} e^{U_{0} \left( \frac{x^k - \rho^k}{\beta^k} \right)} Z_1 \left( \frac{x^k - \rho^k}{\beta^k} \right) \right)
\\
&= -2k^{3}\frac{\partial_\eta (\beta)}{\beta^{2k+1}} e^{U_{0} \left( \frac{x^k - \rho^k}{\beta^k} \right)} Z_1 \left( \frac{x^k - \rho^k}{\beta^k} \right)
\\\quad
&+ \frac{k^{3}}{\beta^{2k}} e^{U_{0} \left( \frac{x^k - \rho^k}{\beta^k} \right)} \left( \nabla U_{0}Z_1 + \nabla Z_1\right)\left( \frac{x^k - \rho^k}{\beta^k} \right) \cdot \left( -\frac{x^k - \rho^k}{\beta^{k+1}} \partial_\eta (\beta) - \frac{\rho^{k-1}\partial_\eta (\rho)}{\beta^k} \right)
\\
&= O \left( p \frac{\rho^k}{\beta^{3k}} \frac{\beta^{5k}}{|x^k - \rho^k|^5 + \beta^{5k}} \right).
\end{align*}
Using Lemma 4.7 in \cite{LIA},  we can derive that for \(p\) large (independently on \(\eta\)),
\begin{align*}
0 &= c(\eta)b(\eta)\frac{\rho^k}{\beta^k} \int_{\Omega} |x|^{2k-2}e^{U_2(x)}\tilde{Z}(x)P\tilde{Z}(x)dx \left(1 + O\left(\frac{\log p}{p}\right)\right)
\\
&\quad
+ O\left(c(\eta)\int_{\Omega} |x|^{2k-2}e^{U_2(x)}\tilde{Z}(x)dx\right)
\\
&\quad
+ O\left(c(\eta)p\frac{\rho^k}{\beta^{3k}}\int_{\Omega} |x|^{2k-2}\frac{\beta^{5k}}{|x^k - \rho^k|^5 + \beta^{5k}}|\phi(x)|dx\right)
\\
&
= c(\eta)b(\eta)\frac{8k}{3}\pi\frac{\rho^k}{\beta^k}\left(1 + O\left(\frac{\log p}{p}\right)\right)
\\
&\quad
+ O\left(c(\eta)\beta^{3k}\int_{\Omega} |x|^{2k-2}\frac{|x^k - \rho^k|}{|x^k - \rho^k|^6 + \beta^{6k}}dx\right)
\\
&\quad
+ O\left(c(\eta)p\rho^k\beta^{2k}\|\phi\|_\infty\int_{\Omega} |x|^{2k-2}\frac{1}{|x^k - \rho^k|^5 + \beta^{5k}}dx\right)
\\
&
= c(\eta)b(\eta)\frac{8k}{3}\pi\frac{\rho^k}{\beta^k}\left(1 + O\left(\frac{\log p}{p}\right)\right) + O(c(\eta)) + O\left(c(\eta)\frac{1}{p^2}\frac{\rho^k}
{\beta^k}\right),
\end{align*}
which shows that \(c(\eta) = 0\). \end{proof}

The main order term of the reduced energy \( F_p \) is given by the following lemma:

\begin{lemma}\label{5.9}
For any \( \varepsilon > 0 \) small enough, it holds 
\begin{align}\label{e.2}
F_p(\eta) = \frac{2e\pi}{p} \left( \varphi_k(\eta) + O \left( \frac{\log p}{p} \right) \right) \quad \text{as } p \to +\infty \text{ uniformly in } \eta \in [\eta_{\infty,k} - \varepsilon, \eta_{\infty,k} + \varepsilon],
\end{align}
where
\begin{align*}
\varphi_k(\eta) := \left( 2 +\alpha+ 2k\eta^2 \right) \eta^{-\frac{4k\eta^2}{(2k\eta-2-\alpha)(\eta+1)}}.
\end{align*}
\end{lemma}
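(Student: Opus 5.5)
We reduce $F_p(\eta)$ to $I_p(\Upsilon)$ and then compute $I_p(\Upsilon)$ from the local expansions of Section 3.

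\textbf{Step 1: removing $\phi$.} Write
\[
I_p(\Upsilon+\phi)-I_p(\Upsilon)=-\int_\Omega \mathcal{E}\phi-\int_\Omega \Big(\tfrac{1}{p+1}|x|^\alpha\big(|\Upsilon+\phi|^{p+1}-|\Upsilon|^{p+1}-(p+1)|\Upsilon|^{p-1}\Upsilon\phi\big)-\tfrac12|\nabla\phi|^2\Big).
\]
The pieces are bounded as follows.
\begin{itemize}
\item By \eqref{h3} and Proposition \ref{lem2.6}, $|\int\mathcal{E}\phi|\le C\|\mathcal{E}\|_*\|\phi\|_\infty=O(p^{-7})$.
\item The gradient term is $O(\|\phi\|_{H^1_0}^2)=O(p^{-6})$ by Proposition \ref{5.1}.
\item The second-order remainder is bounded by $p\int|x|^\alpha|\Upsilon+O(\phi)|^{p-1}\phi^2$. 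Using \eqref{ch4} and Lemma \ref{5.3}, this is $O(p\|\phi\|_\infty^2)=O(p^{-5})$.
\end{itemize}
Hence $F_p(\eta)=I_p(\Upsilon)+O(p^{-5})$ uniformly in $\eta$.

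\textbf{Step 2: the Dirichlet term.} Integrate by parts, using $-\Delta PU_1=|x|^\alpha e^{U_1}$, $-\Delta PU_2=|x|^{2k-2}e^{U_2}$ and the analogous equations \eqref{VY1}--\eqref{WY2}. This gives
\[
\int|\nabla\Upsilon|^2=\tau^2\Big(\int|x|^\alpha e^{U_1}\big(PU_1-\eta PU_2\big)+\eta^2\!\int|x|^{2k-2}e^{U_2}PU_2-\eta\!\int|x|^{2k-2}e^{U_2}PU_1\Big)+O(\tau^2).
\]
Each integral is then evaluated using the expansions of Lemma \ref{lem2.2} in the scaled variables $x=\delta y$ and $x=\sqrt[k]{\beta^ky+\rho^k}$, together with
\[
\int|y|^\alpha e^{U_\alpha}=4\pi(2+\alpha),\qquad \int e^{U_0}=8\pi .
\]
Near the origin, $PU_1-\eta PU_2\approx p+U_\alpha(y)+O(1)$, so the first integral contributes $4\pi(2+\alpha)(p+O(1))$. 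Near the negative peaks the factor $k$ from the change of variables cancels the $1/k$ area per peak, giving $8k\pi\eta^2(p+O(1))$. Together,
\[
\int|\nabla\Upsilon|^2=\tau^2p\big(4\pi(2+\alpha)+8k\pi\eta^2\big)(1+O(1/p)).
\]

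\textbf{Step 3: the nonlinear term.} On the same regions use \eqref{2.18}, \eqref{2.108}, \eqref{1.11} and Lemma \ref{pe.0}, so that $|x|^\alpha|\Upsilon|^{p+1}\approx \tau p\,(\text{weight})e^{U}$. Away from the peaks, Lemma \ref{lem2.5} makes the contribution negligible. This gives
\[
\int|x|^\alpha|\Upsilon|^{p+1}=\tau^2p\big(4\pi(2+\alpha)+8k\pi\eta^2\big)(1+O(1/p)),
\]
so the $\tfrac{1}{p+1}$ term is lower order and
\[
I_p(\Upsilon)=\tfrac12\tau^2p\big(4\pi(2+\alpha)+8k\pi\eta^2\big)(1+O(\log p/p)).
\]
Inserting $\tau=\frac{\sqrt e}{p}\eta^{-\frac{2k\eta}{(2k\eta-2-\alpha)(1+1/\eta)}}+O(\log p/p^2)$ from Lemma \ref{lem2.1} yields
\[
I_p(\Upsilon)=\frac{2\pi e}{p}(2+\alpha+2k\eta^2)\,\eta^{-\frac{4k\eta^2}{(2k\eta-2-\alpha)(\eta+1)}}\big(1+O(\log p/p)\big),
\]
which is exactly $\frac{2e\pi}{p}\varphi_k(\eta)$. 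The $\log p$ loss comes from the error in $\tau$.

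\textbf{Main obstacle.} The difficult step is keeping the $O(1)$ relative errors in Step 2 under control. The cross terms, such as $PU_2$ near the origin, which by \eqref{pp.3} equals $-4k\log\rho+O(1)$ with $\log\rho\sim p$, are of order $p$. They must be absorbed consistently through relations \eqref{1.11} so that the coefficient of $p$ is exactly $1$ and no extra $\eta$-dependence survives at leading order. The plan for this step is to group the terms precisely as in \eqref{2.18} and \eqref{2.108}, where \eqref{1.11} was designed to make the bracketed constant equal to $p$. The corrections from $V_i,W_i$ then enter only at relative order $1/p$, and the remaining errors are exponentially small by Lemma \ref{lem2.x}.
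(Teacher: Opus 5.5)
Your proof is correct, and its core (Step 2) is the paper's own computation. Like the paper, you integrate $\int_\Omega|\nabla\Upsilon|^2$ by parts into the pairings of $|x|^\alpha e^{U_1}$ and $|x|^{2k-2}e^{U_2}$ with $PU_1,PU_2$ (the paper's $A_1,\dots,A_4$) and evaluate them with Lemma \ref{lem2.2}. You then use the last two relations in \eqref{1.11} to replace $4k\eta\log\rho-2(2+\alpha)\log\delta$ and $\frac{2(2+\alpha)}{\eta}\log\rho-4k\log\beta$ by $p+O(1)$.

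The difference is in how $\phi$ and the potential term are removed. The paper tests \eqref{5.0} against $\Upsilon+\phi$. Using the orthogonality condition and $|c_p(\eta)|\le Cp^{-4}$, this gives the Nehari-type identity $\int_\Omega|x|^\alpha|\Upsilon+\phi|^{p+1}=\int_\Omega|\nabla(\Upsilon+\phi)|^2+O(p^{-4})$. Hence $F_p=(\frac12-\frac1{p+1})\int_\Omega|\nabla(\Upsilon+\phi)|^2+O(p^{-5})$, and only the cross term $\int_\Omega\nabla\Upsilon\cdot\nabla\phi=O(p^{-3}\|\Upsilon\|_{H^1_0})$ remains to be discarded. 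No information on $\int_\Omega|x|^\alpha|\Upsilon|^{p+1}$ is needed.

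You instead expand $I_p$ around $\Upsilon$. This replaces the use of $c_p(\eta)$ by $\|\mathcal E\|_*\le Cp^{-4}$ (Proposition \ref{lem2.6}) plus an \eqref{ch4}-type bound for $p|x|^\alpha|\Upsilon+t\phi|^{p-1}$. It is the standard energy expansion and does not use the equation satisfied by $\Upsilon+\phi$, but it forces you to treat the potential energy of $\Upsilon$ separately.

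Your Step 3 does more than is needed. Because of the prefactor $\frac1{p+1}$, the crude bound $\int_\Omega|x|^\alpha|\Upsilon|^{p+1}\le\|\Upsilon\|_\infty^2\int_\Omega|x|^\alpha|\Upsilon|^{p-1}=O(1/p)$ already suffices; it follows from \eqref{ch4} and the uniform boundedness of $\Upsilon$. That makes the whole term $O(p^{-2})$, which is absorbed in the error. Your sharper asymptotics are still correct and consistent with the potential-energy limits stated in Theorem \ref{athm1.1}.

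A minor point: since \eqref{ch4} already carries the factor $p$, your Taylor remainder in Step 1 is actually $O(\|\phi\|_\infty^2)=O(p^{-6})$, not just $O(p^{-5})$.
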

\begin{proof}
Multiplying \eqref{5.0} by $ \Upsilon + \phi $ and integrating by parts,
we can have
\begin{align*}
\int_{\Omega} |x|^{\alpha}|\Upsilon+ \phi|^{p} &= \int_{\Omega} |\nabla (\Upsilon + \phi)|^2 + c \int_{\Omega} |x|^{2k-2}e^{U_2(x)}\tilde{Z}(x)(\Upsilon(x) + \phi(x))dx
\\
&
= \int_{\Omega} |\nabla (\Upsilon + \phi)|^2 +  c \int_{\Omega} |x|^{2k-2}e^{U_2(x)}\tilde{Z}(x)\Upsilon(x)dx 
\\
&
= \int_{\Omega} |\nabla (\Upsilon + \phi)|^2 + O \left( \frac{1}{p^4} \right).
\end{align*}
From \eqref{5.2} and \eqref{F.2}, we have
\begin{align}\label{e.1}
F_p(\eta) &= \left( \frac{1}{2} - \frac{1}{p+1} \right) \int_{\Omega} |\nabla (\Upsilon + \phi)|^2 + O \left( \frac{1}{p^5} \right)\notag
\\
&
= \left( \frac{1}{2} + O \left( \frac{1}{p} \right) \right) \left( \int_{\Omega} |\nabla \Upsilon|^2  + 2 \int_{\Omega} \nabla \Upsilon\cdot \nabla \phi + \int_{\Omega} |\nabla \phi|^2 \right) + O \left( \frac{1}{p^5} \right)
\notag\\
&
= \left( \frac{1}{2} + O \left( \frac{1}{p} \right) \right) \int_{\Omega} |\nabla \Upsilon|^2 + O \left( \frac{1}{p^3} \|\Upsilon\|_{H_0^1} \right) + O \left( \frac{1}{p^5} \right). \end{align}

With the aid of equations \eqref{01.040} and \eqref{01.0040}, Lemmas \ref{lem2.2} and \ref{lem2.1} and the relations of paramaters \eqref{1.11}, we can get
\begin{align*}
&\int_{\Omega} |\nabla \Upsilon|^2
\\
&= \tau^2  \int_{\Omega} |x|^{\alpha}e^{U_1(x)} \left( 1 +  \frac{V_{\alpha}(\frac{x}{\delta})-f_1
(U_{\alpha}(\frac{x}{\delta}))}{p} + \frac{W_{\alpha}(\frac{x}{\delta})-
f_2(U_{\alpha}(\frac{x}{\delta}), V_{\alpha}(\frac{x}{\delta}))}{p^2} \right) PU_1(x)dx \\
& -\eta\tau^2  \int_{\Omega} |x|^{\alpha}e^{U_1(x)} \left( 1 +  \frac{V_{\alpha}(\frac{x}{\delta})-f_1
(U_{\alpha}(\frac{x}{\delta}))}{p} + \frac{W_{\alpha}(\frac{x}{\delta})-
f_2(U_{\alpha}(\frac{x}{\delta}), V_{\alpha}(\frac{x}{\delta}))}{p^2} \right)
PU_2(x)dx
 + \eta^2 \tau^2\int_{\Omega} |x|^{2k-2} e^{U_2(x)} \\
&\quad\times\left( 1 +\frac{V_{0} \left( \frac{x^k - \rho^k}{\beta^k} \right)-
f_1 \left( U_{0} \left( \frac{x^k - \rho^k}{\beta^k} \right) \right)}{p}+ \frac{W_{0} \left( \frac{x^k - \rho^k}{\beta^k} \right)
f_2 \left( U_{0} \left( \frac{x^k - \rho^k}{\beta^k} \right), V_{0} \left( \frac{x^k - \rho^k}{\beta^k} \right) \right)}{p^2} \right) PU_2(x) dx \\
& -
\eta \tau^2\int_{\Omega} |x|^{2k-2} e^{U_2(x)} \left( 1 + \frac{V_{0} \left( \frac{x^k - \rho^k}{\beta^k} \right)-
f_1 \left( U_{0} \left( \frac{x^k - \rho^k}{\beta^k} \right) \right)}{p}+ \frac{W_{0} \left( \frac{x^k - \rho^k}{\beta^k} \right)-
f_2 \left( U_{0} \left( \frac{x^k - \rho^k}{\beta^k} \right), V_{0} \left( \frac{x^k - \rho^k}{\beta^k} \right) \right)}{p^2} \right) \\
&\quad\times PU_1(x) dx
 + O \left( \int_{\Omega}  \frac{|\nabla PU_1|^2+ |\nabla PU_2|^2+|\nabla PV_1|^2+ |\nabla PV_2|^2+|\nabla PW_1|^2+ |\nabla PW_2|^2}{p}  dx  \right) \\
& := A_{1}+A_{2}+A_{3}+A_{4}+O \left( \frac{1}{p^2} \right) .
\end{align*}

Next, we estimate them term by term.  Using the estimates in Lemma \ref{lem2.2} and the relations between the parameters in Lemma \ref{lem2.1}, we have
\begin{align*}
A_{1}&=
\left( \frac{e}{p^2} \eta^{-\frac{4k\eta^2}{(2k\eta-2-\alpha)(\eta+1)}}
 + O \left( \frac{\log p}{p^3} \right) \right) \cdot \left( \int_{\Omega} |x|^{\alpha} e^{U_1(x)} \left( 1 + O \left( \frac{\log^4 \left( \frac{|x|}{\delta} \right) + 2}{p} \right) \right) PU_1(x) dx \right) \\
&= 4(2+\alpha)\pi\left( \frac{e}{p^2} \eta^{-\frac{4k\eta^2}{(2k\eta-2-\alpha)(\eta+1)}}
 + O \left( \frac{\log p}{p^3} \right) \right)\big(-2(2+\alpha)\log\delta+O(1)\big),
\end{align*}
\begin{align*}
A_{2}&=  - \eta\left( \frac{e}{p^2} \eta^{-\frac{4k\eta^2}{(2k\eta-2-\alpha)(\eta+1)}}
 + O \left( \frac{\log p}{p^3} \right) \right) \cdot\Bigg( \int_{\{|x| \leq \frac{\rho}{2}\}} |x|^{\alpha}e^{U_1(x)} \left( 1 + O \left( \frac{\log^4 \left( |\frac{x}{\delta}|\right) + 2}{p} \right) \right) PU_2(x) dx  \\
&
\quad+ \int_{\{\Omega\backslash\{|x| \leq \frac{\rho}{2}\}\}} |x|^{\alpha}e^{U_1(x)} \left( 1 + O \left( \frac{\log^4 \left(  |\frac{x}{\delta}|  \right) + 2}{p} \right) \right) PU_2(x) dx\Bigg)\\
&=- \eta\left( \frac{e}{p^2} \eta^{-\frac{4k\eta^2}{(2k\eta-2-\alpha)(\eta+1)}}
 + O \left( \frac{\log p}{p^3} \right) \right) \big(-4k\log\rho+O(1)\big )\\
&
\quad\times\Bigg( \int_{\{x\in \R^2:|x| \leq \frac{\rho}{2\delta}\}} |x|^{\alpha}e^{U_{\alpha}(x)} \left( 1 + O \left( \frac{\log^4 \left(  |\frac{x}{\delta}| \right) + 2}{p} \right) \right) dx
+O \left(\int_{x\in \Omega:|x| > \frac{\rho}{2}}\frac{\delta^{2+\alpha}}
{|x|^{4+\alpha}}\frac{\log^4\frac{1}{\delta
}}{p}\log\frac{1}{\beta
}\right)\Bigg)
\\
&=- 4(2+\alpha)\pi \eta\left( \frac{e}{p^2}  \eta^{-\frac{4k\eta^2}{(2k\eta-2-\alpha)(\eta+1)}}
 + O \left( \frac{\log p}{p^3} \right) \right) \big(-4k\log\rho+O(1)\big) +O \left(\frac{\delta^{2+\alpha}}
{|\rho|^{2+\alpha}}\frac{\log^4\frac{1}{\delta
}}{p}\log\frac{1}{\beta
}\right),
\end{align*}
\begin{align*}A_{3}&=  \eta^2
\left( \frac{e}{p^2} \eta^{-\frac{4k\eta^2}{(2k\eta-2-\alpha)(\eta+1)}}
 + O \left( \frac{\log p}{p^3} \right)\right)
\int_{\Omega} |x|^{2k-2} \left( 1 + O \left( \frac{\log^4 \left( \frac{|x^k-\rho^k|}{\beta^k} \right) + 2}{p} \right) \right) e^{U_2(x)} PU_2(x) dx \\
&=\eta^2
\left( \frac{e}{p^2} \eta^{-\frac{4k\eta^2}{(2k\eta-2-\alpha)(\eta+1)}}
 + O \left( \frac{\log p}{p^3} \right)\right)
\int_{\mathbb{R}^{2}} ke^{U(y)} \left( 1 + \frac{\log^4(|y| + 2)}{p} \right) (-4k \log \beta + O(\log(2 + |y|))) dy \\
&=8\pi\eta^2
\left( \frac{e}{p^2} \eta^{-\frac{4k\eta^2}{(2k\eta-2-\alpha)(\eta+1)}}
 + O \left( \frac{\log p}{p^3} \right)\right)
\big(-4k^{2}\log\beta+O(1)\big)
\end{align*}and
\begin{align*}
A_{4}&= -\eta
\left( \frac{e}{p^2}  \eta^{-\frac{4k\eta^2}{(2k\eta-2-\alpha)
(\eta+1)}}
 + O \left( \frac{\log p}{p^3} \right)\right)\Bigg(\int_{x\in \Omega :|x^k-\rho^k|\leq
\frac{1}{2}\rho^k} |x|^{2k-2} \left( 1 + O \left( \frac{\log^4 \left( \frac{|x^k-\rho^k|}{\beta^k} \right) + 2}{p} \right) \right)\\
& \quad\times e^{U_2(x)} PU_1(x) dx\Bigg)
 +O\left(\int_{\Omega\backslash\big\{|x^k-
\rho^k|\leq
\frac{1}{2}\rho^k\big\}} |x|^{2k-2} \left( 1 + O \left( \frac{\log^4 \left( \frac{|x^k-\rho^k|}{\beta^k} \right) + 2}{p} \right) \right) e^{U_2(x)} U_1(x) dx\right)\\
&=-\eta
\left( \frac{e}{p^2} \eta^{-\frac{4k\eta^2}{(2k\eta-2-\alpha)
(\eta+1)}}
 + O \left( \frac{\log p}{p^3} \right)\right)\big(-2(2+\alpha)\log\rho+O(1)
\big)\\
& \quad\times\Bigg(
\int_{y\in \R^2: |y|\leq
\frac{1}{2}\frac{\rho^k}{\beta^k}} e^{U_0(y)}\left( 1 + O \left( \frac{\log^4|y|+ 2}{p} \right) \right)dy\Bigg)
\\
& \quad+O\left(\int_{y\in \R^2:|y|>
\frac{1}{2}\frac{\rho^k}{\beta^k}} e^{U_0(y)}\left( 1 + O \left( \frac{\log^4\frac{1}{\beta}}{p}
\log\frac{1}{\delta} \right) \right)dy\right)\\
&=-8k\pi\eta
\left( \frac{e}{p^2} \eta^{-\frac{4k\eta^2}{(2k\eta-2-\alpha)
(\eta+1)}}
 + O \left( \frac{\log p}{p^3} \right)\right)\big(-2(2+\alpha)\log\rho+O(1)\big)
+O\left(\frac{\log\frac{1}{\delta}}{p^{2}}
\frac{\beta^{2k}}{\rho^{2k}}\right).
\end{align*}
From Lemma \ref{lem2.1}, we can conclude that
\begin{align*}
\int_{\Omega} |\nabla \Upsilon|^2&= 4\pi\left( \frac{e}{p^2} \eta^{-\frac{4k\eta^2}{(2k\eta-2-\alpha)(\eta+1)}}
 + O \left( \frac{\log p}{p^3} \right) \right)\\
&\quad\times
\Bigg((2+\alpha)\big(-2(2+\alpha)\log\delta
+4k\eta\log\rho\big)+2\big(-4k^2
\eta^2\log\beta
+2(2+\alpha)k\eta\log\rho
\big)\Bigg)
\\
&=
4\pi\left( \frac{e}{p^2} \eta^{-\frac{4k\eta^2}{(2k\eta-2-\alpha)(\eta+1)}}
 + O \left( \frac{\log p}{p^3} \right) \right)\big(p(2+\alpha)+2pk\eta^{2}
+O(1)
\big)
\\
&=\frac{4e\pi}{p} \left( \varphi_k(\eta) + O \left( \frac{\log p}{p} \right) \right),
\end{align*}
which, together with \eqref{e.1}, gives \eqref{e.2}.
\end{proof}

We are now in position to solve the reduced problem and to prove the main result of this section.

\begin{proposition}\label{e.3}
For any $\varepsilon > 0$, there is $p_\varepsilon > 0$ such that for any $p > p_\varepsilon$ the function $F_p$ has a critical point $\eta = \eta(p)$ such that:
\begin{align}\label{e.4}
\eta(p) \in (\eta_{\infty,k} - \varepsilon, \eta_{\infty,k} + \varepsilon), \quad \eta_{\infty,k} := \frac{2(2+\alpha)}{2k-2-\alpha}.
\end{align}
In particular, $\eta(p) \to \eta_{\infty,k}$ as $p \to +\infty$.
\end{proposition}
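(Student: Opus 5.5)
Since $F_p$ is of class $C^1$ on $[\eta_{\infty,k}-\varepsilon,\eta_{\infty,k}+\varepsilon]$ (Lemma \ref{5.8}) and Lemma \ref{5.9} gives $pF_p(\eta)/(2e\pi)=\varphi_k(\eta)+O(\frac{\log p}{p})$ uniformly there, I will show that $\varphi_k$ has a strict local minimum at $\eta_{\infty,k}$. A standard comparison argument then produces an interior minimizer of $F_p$, hence a critical point.

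\textbf{Step 1.} Write $\varphi_k=e^{g}$ with
\[
g(\eta)=\log(2+\alpha+2k\eta^2)-\frac{4k\eta^2\log\eta}{(2k\eta-2-\alpha)(\eta+1)},
\]
and compute $g'(\eta_{\infty,k})$. At $\eta_{\infty,k}=\frac{2(2+\alpha)}{2k-2-\alpha}$ the following simplifications hold:
\[
2k\eta_{\infty,k}-2-\alpha=(2+\alpha)\frac{2k+2+\alpha}{2k-2-\alpha},\qquad \eta_{\infty,k}+1=\frac{2k+2+\alpha}{2k-2-\alpha}.
\]
In the proof of Lemma \ref{lemd.1} these identities already gave $\frac{2k\eta^2}{(2k\eta-2-\alpha)(\eta+1)}=\frac{8k(2+\alpha)}{(2k+2+\alpha)^2}$. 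The term $\log(2+\alpha+2k\eta^2)$ contributes $\frac{4k\eta}{2+\alpha+2k\eta^2}$. Differentiating the rational factor $\frac{\eta^2}{(2k\eta-2-\alpha)(\eta+1)}$ gives a numerator proportional to $(2k-2-\alpha)\eta^2-2(2+\alpha)\eta=\eta\bigl((2k-2-\alpha)\eta-2(2+\alpha)\bigr)$, and this vanishes exactly at $\eta_{\infty,k}$. So that piece drops out, and $g'(\eta_{\infty,k})$ reduces to
\[
\frac{4k\eta}{2+\alpha+2k\eta^2}-\frac{4k\eta}{(2k\eta-2-\alpha)(\eta+1)}.
\]
The expansion $(2k\eta-2-\alpha)(\eta+1)=2k\eta^2+(2k-2-\alpha)\eta-(2+\alpha)$ equals $2k\eta^2+2+\alpha$ precisely when $(2k-2-\alpha)\eta=2(2+\alpha)$. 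Hence $g'(\eta_{\infty,k})=0$, which is the reason for the choice of $\eta_{\infty,k}$.

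\textbf{Step 2.} Verify $g''(\eta_{\infty,k})>0$. This requires differentiating the above expression once more at $\eta_{\infty,k}$ and simplifying with the same identities. Write $D(\eta):=(2k\eta-2-\alpha)(\eta+1)-(2+\alpha+2k\eta^2)=(2k-2-\alpha)\eta-2(2+\alpha)$. The contributions then combine into a term of the form $\frac{4k\eta D'(\eta)}{(2+\alpha+2k\eta^2)^2}>0$, since $D'=2k-2-\alpha>0$. To this one must add the second derivative of the rational factor times $-4k\log\eta$, which is positive because $\log\eta_{\infty,k}<0$ by \eqref{x.2}, provided that second derivative has the right sign. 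This sign check is the main obstacle, and the inequalities \eqref{x.3}–\eqref{x.5} and the range \eqref{k.1} are presumably what is needed to close it. If it fails, I fall back on a weaker goal: I only need $\varphi_k(\eta_{\infty,k})<\min\{\varphi_k(\eta_{\infty,k}\pm\varepsilon)\}$ for small $\varepsilon$, and a second-order Taylor computation of $g$ at $\eta_{\infty,k}$ with a nonzero second coefficient suffices. If instead $\eta_{\infty,k}$ turns out to be a strict local maximum, the same argument runs with max in place of min.

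\textbf{Step 3.} Fix $\varepsilon$ small and set $\gamma:=\min\{\varphi_k(\eta_{\infty,k}\pm\varepsilon)\}-\varphi_k(\eta_{\infty,k})>0$. By Lemma \ref{5.9}, for $p>p_\varepsilon$ the error $O(\frac{\log p}{p})$ is below $\gamma/3$ uniformly. Then
\[
\frac{p}{2e\pi}F_p(\eta_{\infty,k})<\frac{p}{2e\pi}\min\{F_p(\eta_{\infty,k}\pm\varepsilon)\},
\]
so the minimum of the continuous function $F_p$ on the compact interval is attained in the open interval $(\eta_{\infty,k}-\varepsilon,\eta_{\infty,k}+\varepsilon)$. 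It is therefore a critical point $\eta(p)$, which gives \eqref{e.4}. Since $\varepsilon$ is arbitrary, letting $\varepsilon\to0$ yields $\eta(p)\to\eta_{\infty,k}$.
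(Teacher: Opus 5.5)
Your overall route is the paper's: show that $\eta_{\infty,k}$ is a strict minimum of $\varphi_k$, then use the uniform expansion of Lemma \ref{5.9} to trap a minimizer of $F_p$ inside $(\eta_{\infty,k}-\varepsilon,\eta_{\infty,k}+\varepsilon)$. Steps 1 and 3 are correct.

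The gap is Step 2. You never prove that $\eta_{\infty,k}$ is a strict local extremum; you only guess that \eqref{x.3}--\eqref{x.5} will settle the sign. Your fallback is circular, because a ``nonzero second Taylor coefficient'' is exactly $g''(\eta_{\infty,k})\neq 0$, which is the unproved claim. This is not cosmetic. At a degenerate critical point of $\varphi_k$, an $O(\frac{\log p}{p})$ perturbation can remove the critical point altogether, so Step 3 needs $\gamma>0$ to be established, not assumed.

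The missing computation is already contained in your Step 1. Set $D_0(\eta):=(2k\eta-2-\alpha)(\eta+1)$ and $D(\eta)=D_0(\eta)-(2+\alpha+2k\eta^2)=(2k-2-\alpha)\eta-2(2+\alpha)$. You found $\frac{d}{d\eta}\frac{\eta^2}{D_0}=\frac{\eta D}{D_0^2}$, and the other two terms combine to $\frac{4k\eta D}{(2+\alpha+2k\eta^2)D_0}$. Hence, for all $\eta$,
\[
g'(\eta)=\frac{4k\eta\, D(\eta)}{D_0(\eta)^2}\Bigl(\frac{D_0(\eta)}{2+\alpha+2k\eta^2}-\log\eta\Bigr).
\]
For $\frac{2+\alpha}{2k}<\eta<1$ the bracket is positive, since $D_0>0$ and $\log\eta<0$. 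So $g'$ has the sign of $D$, which is negative to the left of $\eta_{\infty,k}$ and positive to the right because $2k-2-\alpha>0$. Thus $\eta_{\infty,k}$ is the unique critical point and a strict global minimum of $\varphi_k$ on that interval, which is precisely what the paper asserts ``by direct computation''. In particular
\[
g''(\eta_{\infty,k})=\frac{4k\eta_{\infty,k}(2k-2-\alpha)}{(2+\alpha+2k\eta_{\infty,k}^2)^2}\bigl(1-\log\eta_{\infty,k}\bigr)>0.
\]
In your own decomposition, the feared sign check is just $R''(\eta_{\infty,k})=\eta_{\infty,k}(2k-2-\alpha)/D_0(\eta_{\infty,k})^2>0$, because $R'=\eta D/D_0^2$ and $D(\eta_{\infty,k})=0$. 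None of \eqref{x.3}--\eqref{x.5} is needed, only $2k>2+\alpha$ and $\eta_{\infty,k}<1$.

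Strict global minimality on a fixed interval also lets you take a single minimizer $\eta(p)$ there. For $p$ large it lies in every smaller interval, so the final convergence $\eta(p)\to\eta_{\infty,k}$ no longer requires $\eta(p)$ to depend on $\varepsilon$.
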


\begin{proof}
 By the direct computation, one can show that the function $\varphi_k(\eta)$ defined in Lemma \ref{5.9},  has, in the interval $\frac {2+\alpha}{2k}<\eta<1$ only one critical point, which is given by $\eta_{\infty,k} = \frac{2(2+\alpha)}{2k-2-\alpha}$, which is a strict minimum point for $\varphi_k(\eta)$. This,  together with Lemma \ref{5.9} shows that, 
the reduced energy function $F_p(\eta)$ has a minimum point $\eta(p) \in (\eta_{\infty,k} - \varepsilon, \eta_{\infty,k} + \varepsilon)$, for $p$  large enough.
\end{proof}
We are now in position to give the proof of our main theorem.

\begin{proof}[Proof of Theorem \ref{athm1.1}]
Proposition \ref{5.1} shows that there is a unique solution $\phi=\phi_p(\eta)$ to equation \eqref{5.0} for some $c=c_p(\eta)\in \R$. Proposition \ref{e.3} and Lemma \ref{5.8} tell us that we can find $\eta(p) \in (\eta_{\infty,k} - \varepsilon, \eta_{\infty,k} + \varepsilon)$ such that $c=c_p(\eta)=0$ and $\eta(p)\to \eta_{\infty, k}$ as $p\to \infty$. This means that we can find a solution $\phi=\phi_p(\eta)$ to \eqref{1.7}, which gives a solution $u=\Upsilon+\phi$ (as in \eqref{1.5}) to \eqref{1.0} with $\eta(p)$ that satisfies the estimate in the statement of the theorem.\\
The rates of the parameter $\tau_p,\eta_p,\delta_p,\beta_p,\rho_p$ are given by Lemma \ref{lem2.1}. Putting $\eta=\eta_{\infty,k}$ in Lemma \ref{lem2.1} proves the rates in the theorem and \eqref{abrn}.\\
It lasts to prove the estimates in \eqref{ffff+}.

From the convexity of $t \mapsto |t|^{p+1} $, we can get
\begin{align*}
|a + b|^{p+1} \leq \frac{1}{s^p} |a|^{p+1} + \frac{1}{(1-s)^p} |b|^{p+1}, \quad \forall a, b \in \mathbb{R}, \ s \in (0, 1)
\end{align*}
By \eqref{1.6}, \eqref{p.1}, \eqref{p.2} and Lemma \ref{lem2.1}, we can deduce $\Upsilon$ is uniformly bounded in $\Omega$.
Hence, by \eqref{h2}, \eqref{ch4} and \eqref{5.2}, we have
\begin{align*}
p|x|^{\alpha}|u(x)|^{p+1} &\leq p \frac{|x|^{\alpha}}{s^p} |\Upsilon(x)|^{p+1} + p \frac{|x|^{\alpha}}{(1-s)^p} |\phi(x)|^{p+1}
\\
&\leq C
\frac{1}{s^p}
\bigl(|x|^{\alpha}
e^{U_{1}(x)}+|x|^{2k-2}e^{U_{2}(x)}\bigr)
+ \frac{p}{(1-s)^p} \left( \frac{C}{p^3} \right)^{p+1}
\\
&\leq C
\frac{1}{s^p}
\Bigl(\frac{\delta^{2+\alpha}
}{|x|^{4+\alpha}}
+\frac{|x|^{2k-2}\beta^{2 k}}{|x^k - \rho^k|^{4}}\Bigr)+ \frac{p}{(1-s)^p} \left( \frac{C}{p^3} \right)^{p+1}
\\
&\leq C \frac{1}{s^p} e^{- \frac{p}{2}} \left( \frac{1}{|x|^{2+\delta}} + \frac{|x|^{2k-2}}{|x^k - \rho^k|^{2+\delta}} \right) + \frac{p}{(1-s)^p} \left( \frac{C}{p^3} \right)^{p+1}.
\end{align*}
Take $s = e^{-\frac{1}{4}}$, we can get $ p|u|^{p+1} \to 0 $ in $ L^\infty_{\text{loc}}(\Omega \setminus \{0\})$. Next we observe that 
\begin{align}\label{yp12}
\int_{\Omega} p|x|^{\alpha}u(x)_+^{p+1} dx
&=\int_{\{|x|>\delta^{1-\theta}\}}
p|x|^{\alpha}u(x)_+^{p+1} dx
+
\int_{\{|x|\leq\delta^{1-\theta}\}}
p|x|^{\alpha}u(x)_+^{p+1} dx\notag\\
:&=J_{1}+J_{2}
.\end{align}
From the proof of Lemma \ref{lem2.5}, (see \eqref{al0}), we have that
\begin{align}\label{po2.5}
|x| > \delta\quad \Longrightarrow \quad
p|x|^{\alpha}\Upsilon_{+}^{p} \leq C\frac{\delta^{2(2+\alpha)\lambda}
|x|^{\alpha}}{|x|^{2+\alpha+2(2+\alpha)\lambda}}
.
\end{align}
So, if $|x| > \delta^{1-\theta}$, by the uniform boundedness of $\Upsilon$, \eqref{5.2} and \eqref{po2.5}, we can get
\begin{align*}
p|x|^{\alpha}(u_{+}(x))^{p+1} &\leq p|x|^{\alpha}\big(\Upsilon_{+}(x)+|\phi(x)|\big)^{p+1}
\\
&\leq
p |x|^{\alpha}\frac{1}{s^p} (\Upsilon_{+}(x))^{p+1} + p \frac{1}{(1-s)^p} |\phi(x)|^{p+1}
\\
&\leq C \frac{1}{s^p} \frac{\delta^{2(2+\alpha)\lambda}
}{|x|^{2+2(2+\alpha)\lambda}}+ \frac{p}{(1-s)^p} \left( \frac{C}{p^3} \right)^{p+1}\\
&\leq C  \frac{\delta^{(2-\theta)(2+\alpha)\lambda}
}{|x|^{2+2(2+\alpha)\lambda}}+ \frac{p}{(1-s)^p} \left( \frac{C}{p^3} \right)^{p+1},
\end{align*}
where we take $s = \delta^{\frac{(2+\alpha)\lambda\theta}{p}} $.
Hence by Lemma \ref{lem2.1}, we can obtain
\begin{align}\label{yp13}
J_{1} \leq C\delta^{(2+\alpha)\lambda\theta} + \left( \frac{C}{p^3} \right)^p \to 0.
\end{align}
On the other hand, by \eqref{1.11}, Lemma \ref{lem2.1}, \eqref{2.18}, \eqref{e.4} and \eqref{po2.5}, we can obtain
\begin{align}\label{yp14}
J_{2}
&= \int_{\{|y|\leq\frac{1}{\delta^{\theta}}\}} \delta^{2+\alpha} p|y|^{\alpha}u(\delta y)_+^{p+1} dy\notag
\\
&= \int_{\{|y|\leq\frac{1}{\delta^{\theta}}\}} \delta^{2+\alpha} p|y|^{\alpha}\big(\Upsilon_{+}(\delta y)+
|\phi(\delta y)|
\big)^{p+1} dy\notag
\\
&=\int_{\{|y|\leq\frac{1}{\delta^{\theta}}\}} \delta^{2+\alpha} p|y|^{\alpha}\Big(\Upsilon^{p+1}_{+}(\delta y)
+O\big(\Upsilon^{p}_{+}(\delta y)|\phi(\delta y)|
+
|\phi(\delta y)|^{p+1}\big)
\Big) dy\notag
\\
&= \int_{\{|y|\leq\frac{1}{\delta^{\theta}}\}} \delta^{2+\alpha} p|y|^{\alpha}
\tau^{p+1} p^{p+1}\left(1+\frac{U_{\alpha}(y)}{p}
+O\left(\frac{\log(|y|^{2+\alpha}+2)}{p^2} \right)\right)^{p+1}+o(1)\notag\\
&\quad+O\left(
 \int_{\{|y|\leq\frac{1}{\delta^{\theta}}\}} \delta^{2+\alpha} p|y|^{\alpha}
\tau^{p} p^{p}\left(1+\frac{U_{\alpha}(y)}{p}
+O\left(\frac{\log(|y|^{2+\alpha}+2)}{p^2} \right)\right)^{p}|\phi(\delta y)|
 \right)\notag\\
&= \tau^2 p^2 \int_{\{|y|\leq\frac{1}{\delta^{\theta}}\}} |y|^{\alpha}
\left( 1 + \frac{U_{\alpha}(y)}{p} + O\left( \frac{\log(|y|^{2+\alpha}+2)}{p^2} \right) \right)^{p+1} dy+o(1)\notag\\
&\quad+O\left(\left(\frac{1}{p^3} \right)^{p+1}
\int_{\{|y|\leq\frac{1}{\delta^{\theta}}\}}
\left( 1 + \frac{U_{\alpha}(y)}{p} + O\left( \frac{\log(|y|^{2+\alpha}+2)}{p^2} \right) \right)^{p} dy
\right)\notag
\\
&= \left( e\eta^{-\frac{4k\eta^2}{(2k\eta-2-\alpha)
(\eta+1)}} + O\left( \frac{\log(p)}{p} \right) \right) \int_{\{|y|\leq\frac{1}{\delta^{\theta}}\}} |y|^{\alpha} e^{\left( 1+\frac{1}{p} \right)U_{\alpha}(y)}\notag \\
&\quad\times\left( 1 + O\left( \frac{\log^2(|y|^{2+\alpha}+2)}{p}
+\frac{\log^4(|y|^{2+\alpha}+2)}{p^{2}}\right) \right) dy+o(1)\notag
\\
&= 4(2+\alpha)\pi e \left( \frac{2k-2-\alpha}{2(2+\alpha)} \right)^{\frac{16k(2+\alpha)}{
(2k+2+\alpha)^2}}+o(1).
\end{align}
From \eqref{yp12}, \eqref{yp13} and \eqref{yp14}, we can conclude that
\begin{align*}
&\int_{\Omega} p|x|^{\alpha}u(x)_+^{p+1} dx
\rightarrow 4(2+\alpha)\pi e \left( \frac{2k-2-\alpha}{2(2+\alpha)} \right)^{\frac{16k(2+\alpha)}{
(2k+2+\alpha)^2}}.\end{align*}
By \eqref{1.11}, Lemma \ref{lem2.1}, \eqref{2.108}, \eqref{e.4} and \eqref{po2.5}, we have
\begin{align*}
p&\int_{\Omega} |x|^{\alpha}u_{-}(x)^{p+1} dx\\
&\leq \int_{\Omega} p|x|^{\alpha}\big(\Upsilon_{-}(x)+|
\phi(x)|
\big)^{p+1} dx\notag
\\
&=\int_{\Omega} p|x|^{\alpha}\Big(\Upsilon^{p+1}_{-}(x)
+O\big(\Upsilon^{p}_{-}(x)|\phi(x)|
+
|\phi(x)|^{p+1}\big)
\Big) dx
\\
&= \int_{\{|x^k-\rho^k|\leq\beta^{k(1-\theta
)}\}}p|x|^{\alpha}
\Big(\Upsilon^{p+1}_{-}(x)
+O\big(\Upsilon^{p}_{-}(x)|\phi(x)|
\big)
\Big) dx\\
&\quad
+\int_{\{|x^k-\rho^k|>\beta^{k(1-\theta)}\}}
p|x|^{\alpha}\Big(\Upsilon^{p+1}_{-}(x)
+O\big(\Upsilon^{p}_{-}(x)|\phi(x)|
\big)
\Big) dx
+o(1)\\
&= k (p\eta\tau)^{p+1}\int_{\{|y|\leq\beta^{-k\theta}\}}
\frac{\beta^{2k}}{k^{2}}
\frac{p}{(\beta^{k}y+\rho^{k})^{2-\frac{2}{k}-
\frac{\alpha}{k}}}\left( 1 + \frac{U_{0}(y)}{p} + O\left( \frac{\log(|y|^{2}+2)}{p^2} \right) \right)^{p+1}
dy\\
&\quad+O\left(\frac{(p\eta\tau)^{p}}{p^{3}}\int_{\{|y|\leq\beta^{-k\theta}\}}
\frac{\beta^{2k}}{k^{2}}
\frac{p}{(\beta^{k}y+\rho^{k})^{2-\frac{2}{k}-
\frac{\alpha}{k}}}\left( 1 + \frac{U_{0}(y)}{p} + O\left( \frac{\log(|y|^{2}+2)}{p^2} \right) \right)^{p}\right)
\\
&\quad+O\left(
\int_{\{|y|>\beta^{-k\theta}\}}
\frac{p(p\eta\tau)^{p+1}\beta^{2k}}
{\rho^{2k-2-
\alpha}}
\left( 1 + \frac{U_{0}(y)}{p} + O\left( \frac{\log(|y|^{2}+2)}{p^2} \right) \right)^{p+1}dy
\right)
\\
&\quad+O\left(
\int_{\{|y|>\beta^{-k\theta}\}}
\frac{(p\eta\tau)^{p}}{p^{2}}\frac{\beta^{2k}
}{\rho^{2k-2-
\alpha}}
\left( 1 + \frac{U_{0}(y)}{p} + O\left( \frac{\log(|y|^{2}+2)}{p^2} \right) \right)^{p}dy
\right)
\\
&= k (p\eta\tau)^{2}(1+o(1))
\int_{\{|y|\leq\beta^{-k\theta}\}}e^{(1+\frac{1}{p}
)U_{0}(y)}\left( 1 + O\left( \frac{\log^2(|y|+2)}{p}
+\frac{\log^4(|y|+2)}{p^{2}}\right) \right) dy+o(1)\\
&\quad+O\left(\frac{1}{p^{3}}\int_{\{|y|\leq\beta^{-k
\theta}\}}e^{U_{0}(y)}\left( 1 + O\left( \frac{\log^2(|y|+2)}{p}
+\frac{\log^4(|y|+2)}{p^{2}}\right) \right) dy\right)\\
&\quad+O\left(\int_{\{|y|>\beta^{-k\theta}\}}
e^{(1+\frac{1}{p}
)U_{0}(y)}\left( 1 + O\left( \frac{\log^2(|y|+2)}{p}
+\frac{\log^4(|y|+2)}{p^{2}}\right) \right) dy
\right)
\\
&\quad+O\left(\frac{1}{p^{3}}
\int_{\{|y|>\beta^{-k\theta}\}}e^{U_{0}(y)}\left( 1 + O\left( \frac{\log^2(|y|+2)}{p}
+\frac{\log^4(|y|+2)}{p^{2}}\right) \right) dy\right)\\
&= k (1+o(1)) e\eta^{2-\frac{4k\eta^2}{(2k\eta-2-\alpha)
(\eta+1)}}\int_{\{|y|\leq\beta^{-k\theta}\}}e^{(1+\frac{1}{p}
)U_{0}(y)}
 dy+o(1)
\\
&=8k\pi e \left( \frac{2(2+\alpha)}{2k-2-\alpha} \right)^{\frac{2(2k-2-\alpha)^{2}}{
(2k+2+\alpha)^2}}+o(1).
\end{align*}
So the proof is completed.
\end{proof}

\smallskip

\noindent{\bf Acknowledgements}:\,
The first author has been partially supported by  Next Generation EU-CUP-J55F21004240001,  DM 737-2021, risorse $2022-2023$ and  by the INdAM - GNAMPA Project,  CUP E53C25002010001.  The second author is supported by the Program of China Scholarship Council (Grant: 202506770048).

\noindent{\bf Data availability statement}\\
Our manuscript has no associated data.\\


\end{document}